\newcommand{\be}{\begin{eqnarray}}
\newcommand{\ee}{\end{eqnarray}}
\newcommand{\beq}{\begin{equation}}
\newcommand{\eeq}{\end{equation}}
\newcommand{\beqn}{\begin{equation*}}
\newcommand{\eeqn}{\end{equation*}}
\newcommand{\imag}{\mathrm{i}}
\newcommand{\defas}{\mathrel{\raise.095ex\hbox{$:$}\mkern-4.2mu=}}
\newcommand{\defasr}{\mathrel{=\mkern-4.2mu\raise.095ex\hbox{$:$}}}
\newcommand{\slot}{\,\cdot\,}
\DeclareMathAlphabet{\mathfat}{U}{bbold}{m}{n}          
\DeclareMathOperator{\E}{\mathrm{E}}
\newtheorem{thm}{Theorem}
\newtheorem{cor}[thm]{Corollary}
\newtheorem{lem}[thm]{Lemma}
\newtheorem{remark}[thm]{Remark}
\newcommand\cA{{\mathcal A}}
\newcommand\cC{{\mathcal C}}
\newcommand\cF{{\mathcal F}}
\newcommand\cH{{\mathcal H}}
\newcommand\cI{{\mathcal I}}
\newcommand\cM{{\mathcal M}}
\newcommand\cN{{\mathcal N}}
\newcommand\cP{{\mathcal P}}
\newcommand\cS{{\mathcal S}}
\newcommand\cV{{\mathcal V}}
\newcommand\cZ{{\mathcal Z}}
\newcommand\bC{{\mathbb C}}
\newcommand\bE{{\mathbb E}}
\newcommand\bH{{\mathbb H}}
\newcommand\bN{{\mathbb N}}
\newcommand\bP{{\mathbb P}}
\newcommand\bR{{\mathbb R}}
\newcommand\bZ{{\mathbb Z}}
\newcommand\rE{{\mathrm E}}
\newcommand\rP{{\mathrm P}}
\newcommand\rT{{\mathrm T}}
\newcommand\rd{{\mathrm d}}
\newcommand\rp{{\mathrm p}}
\newcommand\fA{{\mathfrak A}}
\newcommand\fB{{\mathfrak B}}
\newcommand\fF{{\mathfrak F}}
\newcommand{\ve}{\varepsilon}
\def\bfA{\mathbf{A}}
\def\bfB{\mathbf{B}}
\def\bfS{\mathbf{S}}
\def\bfV{\mathbf{V}}
\def\bfc{\mathbf{c}}
\def\bff{\mathbf{f}} 
\def\bfg{\mathbf{g}}
\def\bft{\mathbf{t}}
\def\bfv{\mathbf{v}}
\def\bfSigma{\mathbf{\Sigma}}
\begin{document}

\title[An invariance principle for billiards with random scatterers]{A vector-valued almost sure invariance principle for Sinai billiards with random scatterers}

\author[Mikko Stenlund]{Mikko Stenlund}
\address[Mikko Stenlund]{
Department of Mathematics, University of Rome ``Tor Vergata''\\
Via della Ricerca Scientifica, I-00133 Roma, Italy; Department of Mathematics and Statistics, P.O.\ Box 68, Fin-00014 University of Helsinki, Finland.}
\email{mikko.stenlund@helsinki.fi}
\urladdr{http://www.math.helsinki.fi/mathphys/mikko.html}

\keywords{Dispersing billiards, random scatterers, non-stationary compositions, Brownian motion, almost sure invariance principle, multiple correlations, rho-mixing}
\subjclass[2000]{37D50; 60F17, 82C41, 82D30}


\begin{abstract}
Understanding the statistical properties of the aperiodic planar Lorentz gas stands as a grand challenge in the theory of dynamical systems. Here we study a greatly simplified but related model, proposed by Arvind Ayyer and popularized by Joel Lebowitz, in which a scatterer configuration on the torus is randomly updated between collisions. Taking advantage of recent progress in the theory of time-dependent billiards on the one hand and in probability theory on the other, we prove a vector-valued almost sure invariance principle for the model. Notably, the configuration sequence can be weakly dependent and non-stationary. We provide an expression for the covariance matrix, which in the non-stationary case differs from the traditional one. We also obtain a new invariance principle for Sinai billiards (the case of fixed scatterers) with time-dependent observables, and improve the accuracy and generality of existing results.
\end{abstract}

\maketitle


\subsection*{Acknowledgements}
The author wishes to thank Arvind Ayyer and Joel Lebowitz, both for posing the problem several years ago and for many conversations afterwards. The author is grateful to Marco Lenci for informative discussions on the theory of billiards and to Magda Peligrad for correspondence related to probability theory. Also the hospitality of Carlangelo Liverani and Universit\`a di Roma ``Tor Vergata'' during the preparation of this paper are acknowledged. The work was initiated while the author was a Visiting Member at the Courant Institute of Mathematical Sciences, and it has been supported by the Academy of Finland.


\section{Introduction}
Recall that having infinitely many fixed scatterers in a periodic configuration in $\bR^2$ and a particle moving in the exterior of the scatterers, the particle being in free motion up to elastic collisions with the scatterers, is a model often called the planar periodic Lorentz gas. For example, if the configuration is invariant under the translations of $\bZ^2$, the model corresponds to Sinai billiards on the two-dimensional torus. Under certain assumptions, including that the free path of the particle is uniformly bounded from above and below, many statistical limit results are accordingly known to hold true. 
A planar \emph{aperiodic} Lorentz gas is obtained by relaxing the periodicity assumption on the scatterer configuration. For example, one can begin with a periodic configuration, shift each scatterer by a small amount, independently of the others, and fix the resulting configuration for good. Understanding the statistical properties of the particle trajectories in this case is an outstanding problem in modern dynamics. A source of major difficulties is the phenomenon or re-collisions: The billiard particle could hit the same scatterer infinitely many times and, because of that, the scatterer configuration \emph{seen by the particle} at a given time depends in a complicated way on the history of the billiard trajectory. On the other hand, if the configuration has temporal randomness --- say it is refreshed randomly and independently of the past after each collision --- the situation is simpler. Yet, little is presently known even in that setting. The motivation of this paper is to improve the state of the affairs. To that end, we are going to analyze a model of Sinai billiards with random scatterers, in which the configuration is randomly updated between collisions.

Consider, then, the following random billiard table on a two-dimensional torus obtained by identifying the opposite sides of the unit square. First, a disk of radius $\bar R$ is placed on the torus with its center located at the corner of the square, after which it is fixed for good. We will refer to this disk as the ``gray disk'' in the future. Next, another disk of radius $R$, called the ``white disk'', is placed on the torus at a \emph{random} location so that its center is within distance $\ve$ from the center of the square; see Figure~\ref{fig:table}. We will assume throughout that the ``no-overlap'' conditions
\beqn
\max(\bar R,R+\ve) < \tfrac12
\quad
\text{and}
\quad
\bar R+R+\ve < \tfrac{1}{\sqrt 2}
\eeqn
hold. Given such a triple $(\bar R,R,\ve)$, the strict inequalities guarantee a uniform positive lower bound on the distance between the disks, regardless of the centering of the white disk.

The rules of the dynamics on the table are as follows. Consider a particle traveling with unit speed in the complement of the disks on the surface of the torus. If the particle hits the \emph{white} disk, it bounces elastically off the inert boundary of the disk and continues its motion with unit speed, as illustrated in Figure~\ref{fig:table}. If, on the other hand, the particle hits the \emph{gray} disk instead, it behaves in exactly the same way, but the location of the \emph{white} disk is \emph{refreshed randomly}, so that its center remains within distance $\ve$ from the center of the square. Finally, corresponding to the sides of the original unit square, there are four ``transparent walls'' (the opposite pairs of which are identified) terminating on the gray disk. If the particle meets a transparent wall, it simply passes through. However, if it makes a ``clean pass'' in that it avoids hitting the gray walls at the two ends of the transparent wall in question, see Figure~\ref{fig:clean_pass} and its caption, the location of the \emph{white} disk is again \emph{refreshed randomly}. In brief, the dynamics is that of ordinary billiards, except that the location of the white disk is refreshed randomly each time the particle hits the gray disk or makes a clean pass through a transparent wall. The type of randomness we assume will be made precise in Section~\ref{sec:Result}. In particular, the sequence of configurations is allowed to be weakly dependent and non-stationary.

\begin{figure}[!ht]
\begin{center}
\includegraphics[width=0.3\linewidth]{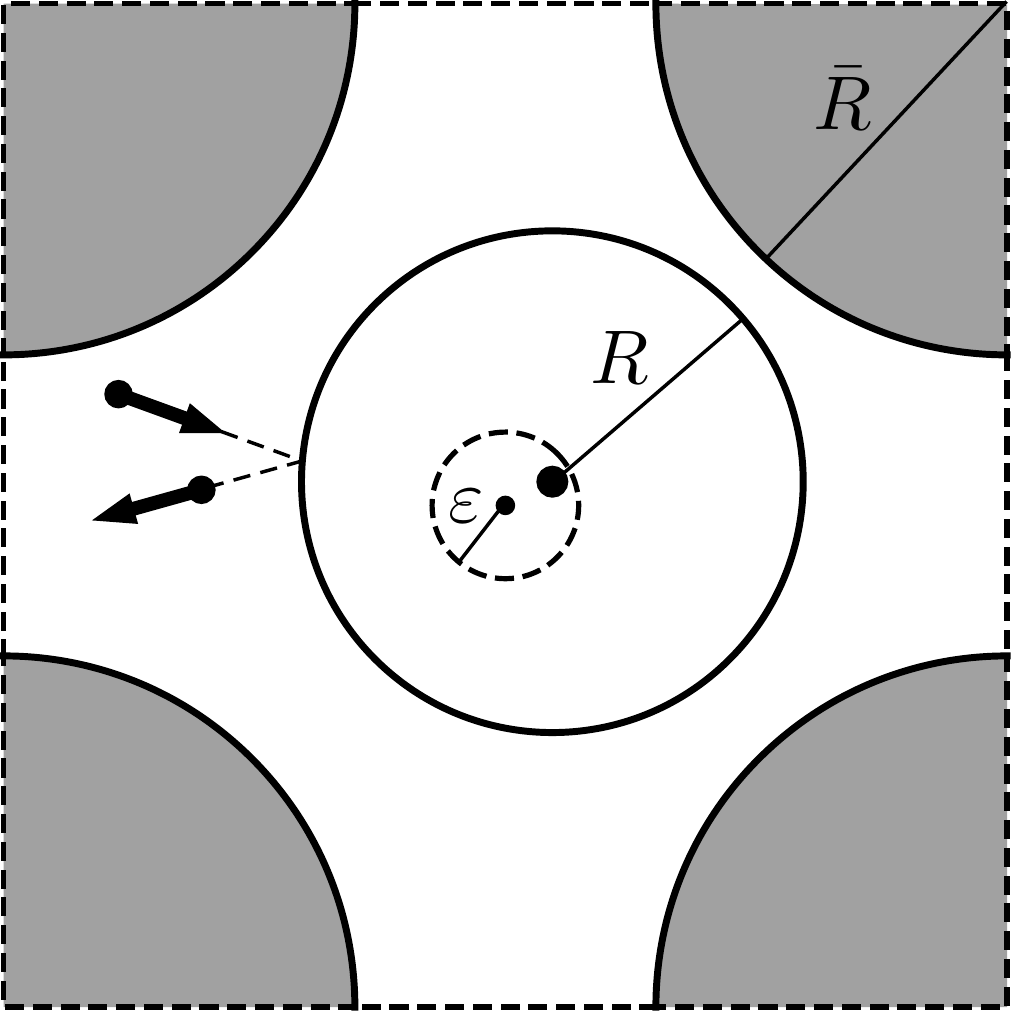}
\caption{The random billiard table. A two-dimensional torus is obtained by identifying the opposite sides of the unit square shown. The gray disk of radius $\bar R$ is fixed for good with its center at the corner. The white disk has a fixed radius $R$ and a random center within distance $\ve$ from the center of the square. The random centering is refreshed after each collision with the gray disk or a clean pass through a transparent wall; see text for details.}
\label{fig:table}
\end{center}
\end{figure}

\begin{figure}[!ht]
\begin{center}
\includegraphics[width=0.3\linewidth]{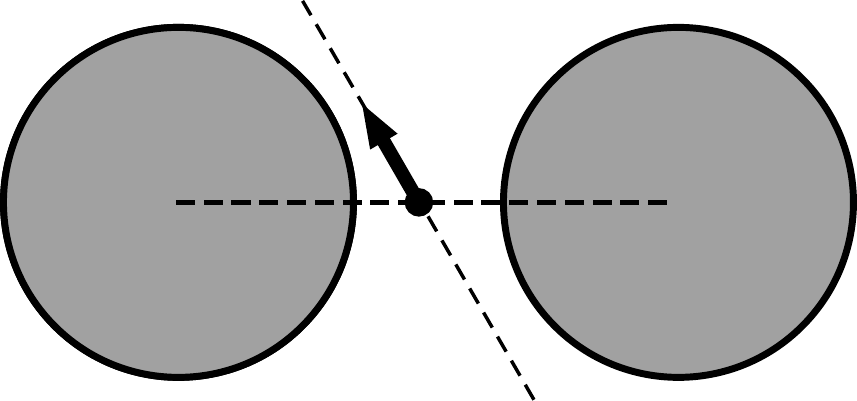}
\caption{A velocity vector corresponding to a ``clean pass'' through a transparent wall. A pass is clean if the segment of unit length parallel to the velocity vector and centered at the point of crossing does not intersect the gray disk.}
\label{fig:clean_pass}
\end{center}
\end{figure}

Notice that the no-overlap conditions with their strict inequalities yield a uniform positive lower bound on length of the free path of the particle. It will be necessary to bound the length of the free path (between successive collisions with a disk) from above. To that end, we introduce the so-called ``finite horizon'' conditions on the geometry of the disks that we will assume to be in effect at all times. With the aid of Figure~\ref{fig:horizon}, it is easy to verify that the condition
\beqn
\bar R \geq \tfrac{1}{2\sqrt 2}
\eeqn
guarantees there is no diagonal passage along which the particle could escape without ever colliding with a disk. On the other hand, the condition
\beqn
\bar R + R - \ve \geq \tfrac12
\eeqn
guarantees there is neither vertical nor horizontal passage; the white disk will then, in any allowed position, contain the smaller centered disk shown in Figure~\ref{fig:horizon}. In fact, under these conditions the free path is uniformly bounded above.
\begin{figure}[!ht]
\begin{center}
\includegraphics[width=0.3\linewidth]{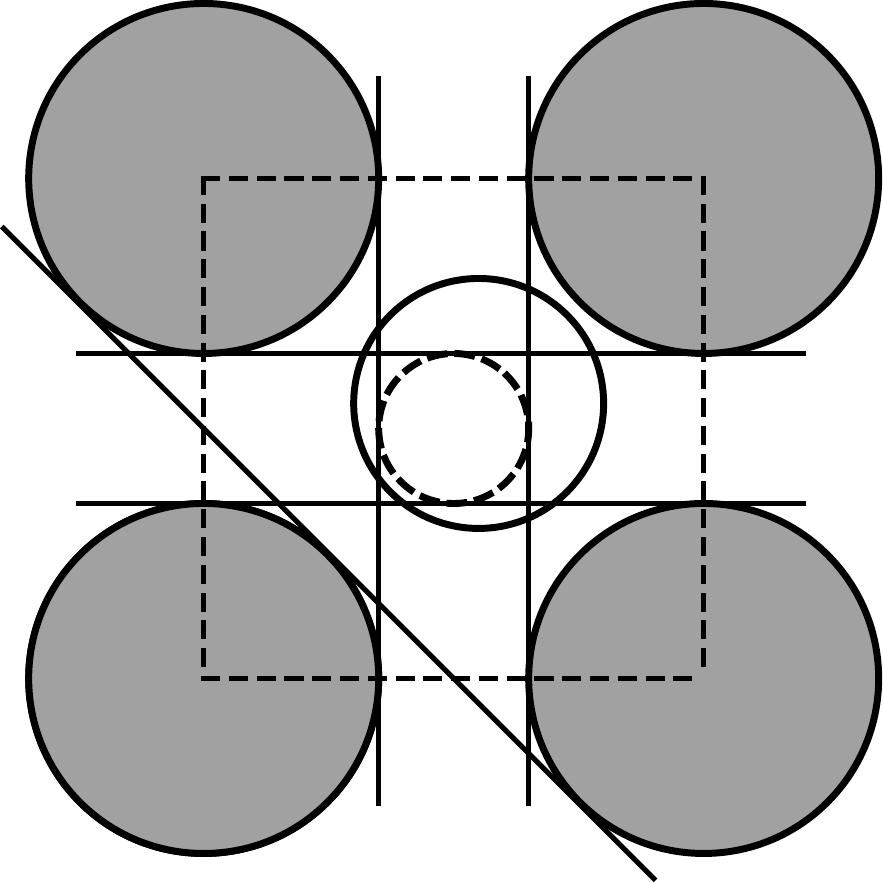}
\caption{The finite horizon assumption. If the gray disk has radius $\bar R\geq \tfrac{1}{2\sqrt 2}$, the diagonal passage is blocked. Given $\bar R$, the white disk blocks the horizontal and vertical passages if it contains the centered dashed disk of radius $\tfrac12-\bar R$.}
\label{fig:horizon}
\end{center}
\end{figure}

Of course, if the above conditions are satisfied for some value of $\ve$, given $\bar R$ and $R$, they remain satisfied for any smaller value of $\ve$ as well. The idea is that $\bar R$ and $R$ are considered fixed once and for all and, for all sufficiently small values of $\ve$, the statistical properties of the motion of the particle can be analyzed.

To the author's knowledge, the problem studied here was initially proposed by Arvind Ayyer and then Joel Lebowitz. A similar model was studied in~\cite{Troubetzkoy_1991}. It was also put forward in \cite{AyyerLiveraniStenlund_2009}, and the recent \cite{Nandori_2012} mentions a reminiscent model due to Lebowitz. For a toy version, some theorems were obtained in~\cite{AyyerLiveraniStenlund_2009} and~\cite{AyyerStenlund}. See also~\cite{SimulaStenlund_2009,LeskelaStenlund_2011} for models with some similarities.  Below we prove an \emph{almost sure invariance principle for vector-valued observables} for the model at hand. 
%
\medskip

As the aim of the paper is to demonstrate a phenomenon, above all, we have not shied away from making convenience assumptions on the geometry at the expense of generality. See, however, Remark~\ref{rem:generalizations} for some immediate generalizations of no extra cost.

The analysis below relies on several standard constructions in the theory of dispersing billiards. These include homogeneous local stable and unstable manifolds, together with a good understanding of their properties, among others. In particular, the random-scatterer model we consider here differs from the classical Sinai billiards in which the configuration of scatterers remains forever fixed. The excellent book \cite{ChernovMarkarian_2006} contains a detailed account of the classical theory and the paper \cite{StenlundYoungZhang_2012} the necessary \emph{uniform} time-dependent generalizations. (Also \cite{OttStenlundYoung,Stenlund_2011} might interest the reader, although the setup there is simpler than in \cite{StenlundYoungZhang_2012}.) It is also necessary to make the observation that, although a billiard map is traditionally defined as the first-return map of the billiard flow to the solid boundary of the domain in question, the theory extends without difficulty to cover (flat or curved) transparent walls as cross sections \cite{Lenci_2003,Lenci_2006}. 
For the sake of clarity, we will not dwell on these quite involved but well-understood constructions here, instead referring the interested reader to the references cited. The mandatory deviation from this occurs when we introduce the billiard maps: They need to be defined with due care, for the presence of flat (here transparent) walls does have the tendency to render the billiard maps non-uniformly hyperbolic. In our case the system \emph{remains uniformly hyperbolic} by a careful choice of the cross section. Other than that, the focus in the present paper is solely on establishing the estimates required to prove the desired invariance principle.

An almost sure invariance principle for Sinai billiards (fixed scatterers) was first obtained for scalar-valued observables in \cite{Chernov_2006,ChernovMarkarian_2006,MelbourneNicol_2005} and for vector-valued ones in \cite{MelbourneNicol_2009}. The approach in the present paper --- which in particular applies to the setting of fixed configurations of those works --- is based on verifying the conditions of \cite{Gouezel_2010} on the characteristic function of the vector-valued process. As the title suggests, the latter paper has been tailored for dynamical systems specifically with the transfer operator formulation in mind. For billiard models the spectral method~\cite{DemersZhang_2011} is quite technical and abstract due to the well-known issue of singularities, on top of the presence of a contracting direction. Here we take an alternative route, providing sufficient information on the characteristic function via strong bounds on correlation functions. In this sense, our work is related to~\cite{Stenlund_2010} and~\cite{Chernov_2006}, and also underlines the possibility of implementing \cite{Gouezel_2010} in situations where the operator setup may be either impractical or out of reach. It is conceivable that the problem could (with an unclear amount of work) also be approached by constructing a random Young tower.

Let us point out that an almost sure invariance principle for vector-valued observables is a nontrivial improvement on the scalar case. Although it is a standard to deduce a multi-dimensional \emph{central} limit theorem, where convergence takes place in distribution, from the corresponding one-dimensional result using the Cram\'er--Wold theorem, the same cannot be said about \emph{almost sure} invariance principles.

Note that there is no physical continuous-time flow associated to the model, because of the way the configuration is randomly updated after each return: A set of (non-interacting) test particles will always ``see'' the same sequence of configurations, which in continuous time would mean that they always return to the section simultaneously. (In particular, the roof function of the corresponding suspension flow is constant.) More realistic flows in the setting of moving scatterers will be future work, extending the results of~\cite{StenlundYoungZhang_2012}.


\bigskip
\noindent{\bf How the paper is organized.}
In Section~\ref{sec:Model} we give a mathematical description of the model. Once the necessary concepts have been introduced, we present the main results of the paper in Section~\ref{sec:Result}. There we also discuss the approach used in the proofs. Section~\ref{sec:preliminaries} is devoted to the technical preliminaries that are required for the analysis of the problem. In Section~\ref{sec:Proof} we prove the main results by establishing sufficient bounds on certain correlation functions and by analyzing the structure of the covariance matrix.



\section{Mathematical description of the model}\label{sec:Model}
In this section we describe the model in full detail so that the main theorem can be formulated in the next section.


\subsection{The free zone condition}

In addition to the no-overlap and finite horizon conditions, we assume the ``free zone'' condition
\beqn
R+\ve < \frac{(1-2\bar R)^{\frac12} - \bar R(1-2\bar R)}{2(1-\bar R)} \equiv L
\eeqn
on $(\bar R,R,\ve)$. By elementary geometrical computations, the free zone condition can equivalently be stated as follows:
\begin{quote}
Given the triple $(\bar R,R,\ve)$, the white disk does not intersect any chord originating from a transparent wall and terminating on a piece of the gray disk at the end of the same transparent wall, in any possible location of the white disk.
\end{quote}
This is clarified in Figure~\ref{fig:free_zone}. The role of the free zone condition is one of convenience, and it could be done away with. As our objective is not to be as general as possible (see Introduction), we choose to keep it. Under the free zone condition, the billiard trajectory will never hit the white disk more than once between successive returns to the cross section $\cM$ to be defined below.

\begin{figure}[!ht]
\begin{center}
\includegraphics[width=0.3\linewidth]{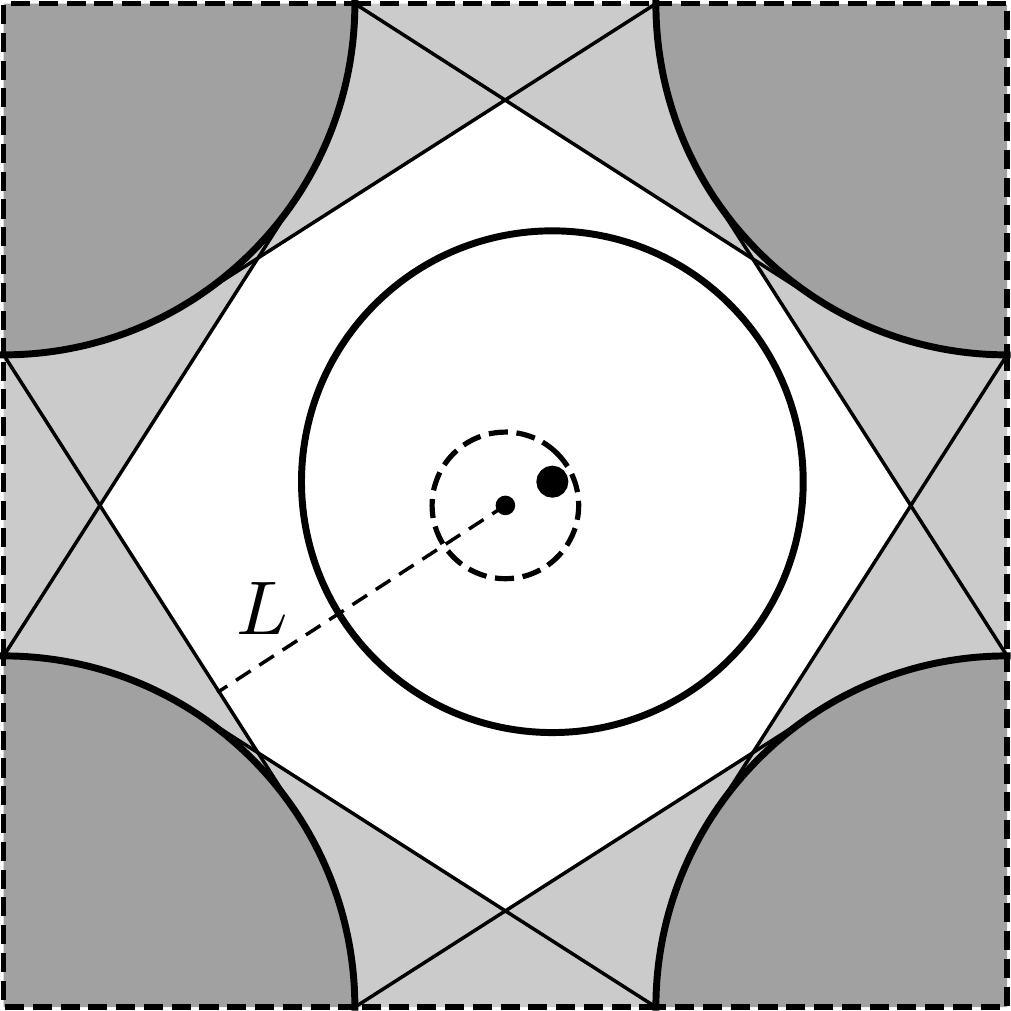}
\caption{The free zone condition. The straight lines are tangent to the gray disk. The white disk is not allowed to enter the region with a light gray shading. The distance from the center of the square to the latter region is~$L$.}
\label{fig:free_zone}
\end{center}
\end{figure}

In what follows, we will assume that the triple $(\bar R,R,\ve)$ has been fixed so that all the conditions introduced so far hold.


\subsection{The cross section}
Here we define the cross section, $\cM$, of the ``full'' billiard flow phase space with respect to which we are eventually going to define the billiard maps. The cross section itself will be independent of the location of the white disk. For that reason the white disk will not enter the discussion at this stage.

We begin by labeling the non-random walls $\Gamma_1,\dots,\Gamma_8$ of the domain as shown in Figure~\ref{fig:section}. The boundary of the gray disk forms the four solid walls $\Gamma_i$ with an odd index $i$, and the other four with an even index are the transparent walls. The walls are ``closed'' in that they contain their endpoints. As customary, let the position~$r$ on a wall be parametrized by arclength and denote by~$\varphi$ the clockwise angle relative to the normal vector of the wall pointing into the domain. Facing in the direction of the normal vector, the value of $r$ increases from left to right. Then any unit vector based on any of the walls and pointing into the domain corresponds, in a one-to-one manner, to an element $x=(r,\varphi)$ of the disjoint union
\beqn
\widehat{\cM} = \coprod_{i=1}^8 \Gamma_i\times [-\tfrac\pi2,\tfrac\pi2].
\eeqn

The mentioned cross section,~$\cM$, of the billiard flow phase space will be obtained by deleting a part of~$\widehat\cM$ as follows. Pick an arbitrary transparent wall $\Gamma_i$, $i$ even, and consider an arbitrary point $x = (r,\varphi)\in\Gamma_i\times[-\tfrac\pi2,\tfrac\pi2]$ representing a velocity vector on $\Gamma_i$. If $x$ corresponds to a crossing which does not yield a clean pass (see Figure~\ref{fig:clean_pass}) as described in the Introduction, we delete $x$ from the coordinate rectangle $\Gamma_i\times[-\tfrac\pi2,\tfrac\pi2]$. Such values of $x$ are illustrated in Figure~\ref{fig:section}. The remaining parts of the coordinate rectangle form a region reminiscent of a rhombus, whose closure we denote by~$\cM_i$. We have thus defined
\beqn
\cM_i\subsetneq \Gamma_i\times[-\tfrac\pi2,\tfrac\pi2], \quad \text{$i$ even.}
\eeqn
On solid walls we include all velocity vectors, thereby setting
\beqn
\cM_i = \Gamma_i\times[-\tfrac\pi2,\tfrac\pi2], \quad \text{$i$ odd.}
\eeqn
Finally, the cross section itself is defined as
\beqn
\cM = \coprod_{i=1}^8 \cM_i \subset \widehat{\cM}.
\eeqn

In plain words, each collision with the gray disk marks a return to $\cM$, in that the post-collision velocity vector is an element of $\cM$. On the other hand, a crossing of a transparent wall is a return to $\cM$ if and only if it yields a clean pass (see Figure~\ref{fig:clean_pass}). Thus, the three crossings of $\Gamma_6$ shown in Figure~\ref{fig:section} are \emph{not} returns to $\cM$.

It is clear (see Figure~\ref{fig:section}) that there exists a constant $d>0$ such that 
\beq\label{eq:transparent_cosine}
\text{$\cos\varphi\geq d$\qquad if \quad $x=(r,\varphi)\in\cM$ is on a transparent wall.}
\eeq
Indeed, the interpretation of the above discussion is that the particle is stopped ($x\in\cM$) at a crossing of a transparent wall only when the crossing angle is sufficiently far from being parallel (depending on position) to the transparent wall. In the opposite case the particle is allowed to pass through without registering a return.

\begin{figure}[!ht]
\begin{center}
\includegraphics[height=0.35\linewidth]{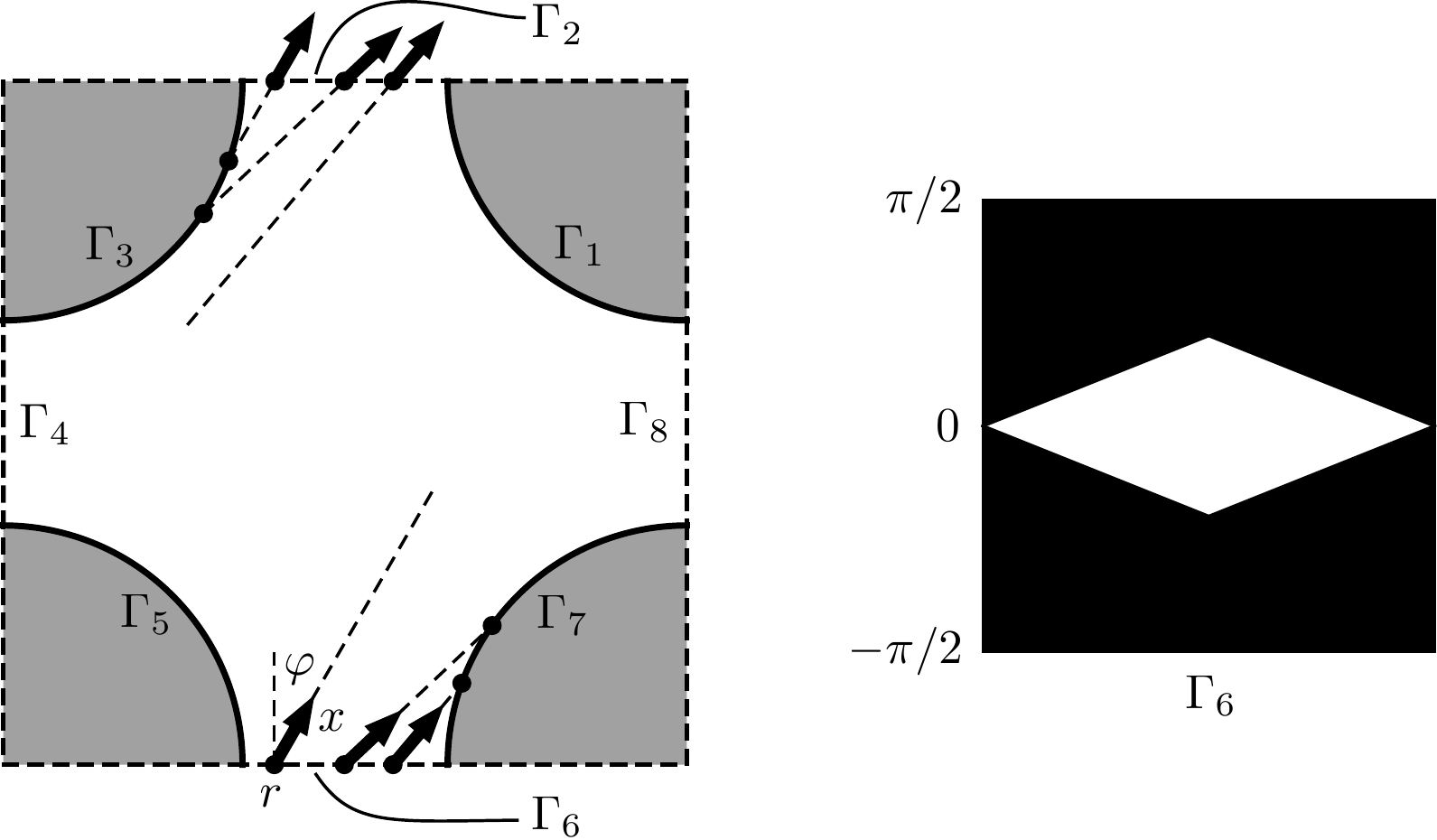}
\caption{The cross section~$\cM$ of the billiard flow phase space does \emph{not} contain the shown vectors on $\Gamma_6$, because they correspond to ``immediate'' collisions (dots on the solid walls) with the gray disk. Deleting all such points from the coordinate rectangle leaves us with the white rhombus-like region $\cM_6\subset\cM$ (sketched above), corresponding to all clean passes through the transparent wall (Figure~\ref{fig:clean_pass}).}
\label{fig:section}
\end{center}
\end{figure}


\subsection{The billiard maps}
Let $\bfc$ denote the position vector of the center of the white disk relative to the center of the square. Given any $\bfc$ and $\delta>0$, we will throughout the text denote by $B_\delta(\mathbf{c})$ the $\delta$-neighborhood of $\bfc$. We say that $\bfc$ is admissible, if $\bfc\in B_\ve(\mathbf{0})$. Having defined the cross section~$\cM\subset\widehat\cM$, the billiard map $F_\bfc$ is defined simply as the first return map from the cross section~$\cM$ to itself of the billiard flow $\Phi_\bfc$ corresponding to the white disk centered at~$\bfc$. If $T_\bfc:\cM\to\bR$ stands for the return time function, then
\beqn
F_\bfc = \Phi_\bfc^{T_\bfc} : \cM\to\cM.
\eeqn
The map $F_\bfc$ is called admissible if $\bfc$ is admissible.
Notice that, due to the geometry, $T_\bfc(x)$ is defined and uniformly bounded above for all $x\in\cM$. Also $F_\bfc(x)$ is well defined, save for relatively few values of $x$ for which the billiard trajectory starting from $x$ meets a corner point, resulting in conflicting candidates for the value of~$F_\bfc(x)$, one on each of the three walls meeting at the corner. The conflict is resolved by identifying the three values. Such exceptional values of $x$ are examples of singularities, and we will come back to their role shortly. For now, it is harmless to ignore them.

It will be technically beneficial to view the map $F_\bfc$ in a different way. Namely, notice that the billiard trajectory between any $x\in\cM$ and its image $F_\bfc(x)\in\cM$ either misses the white disk completely or hits it precisely once. Now, let $F_\bfc^*$ be the billiard map which, in addition to $\cM$, counts collisions with the white disk as returns. More precisely, let $\cM^*$ be the enlarged cross section $\cM^* = \cM\,\amalg\, {\Gamma^*\times[-\pi/2,\pi/2]}$, where $\Gamma^*$ is a parametrization of the boundary of the \emph{white} disk. The parametrization can be fixed in such a way that $\cM^*$ is independent of $\bfc$, and this is what we do. Then $F_\bfc^*:\cM^*\to\cM^*$ is the first return map to~$\cM^*$ of the billiard flow, defined in a fashion similar to $F_\bfc$. Next, let $n_\bfc:\cM^*\to\{1,2\}$ be the smallest number of returns to $\cM^*$ which yields a return to $\cM$, i.e.,
\beqn
n_\bfc(x) = \min\bigl\{n\geq 1\,:\,(F_\bfc^*)^n(x)\in\cM\bigr\}, \quad x\in\cM^*.
\eeqn
It is clear that $n_\bfc(x)=2$ if $x\in\cM$ and the billiard trajectory connecting $x$ to $F_\bfc(x)$ experiences a collision with the white disk. Otherwise $n_\bfc(x) = 1$. With these definitions,
\beqn
F_\bfc = (F_\bfc^*)^{n_\bfc} |_\cM.
\eeqn
Figure~\ref{fig:rules} illustrates the situation.

\begin{figure}[!ht]
\begin{center}
\includegraphics[width=0.35\linewidth]{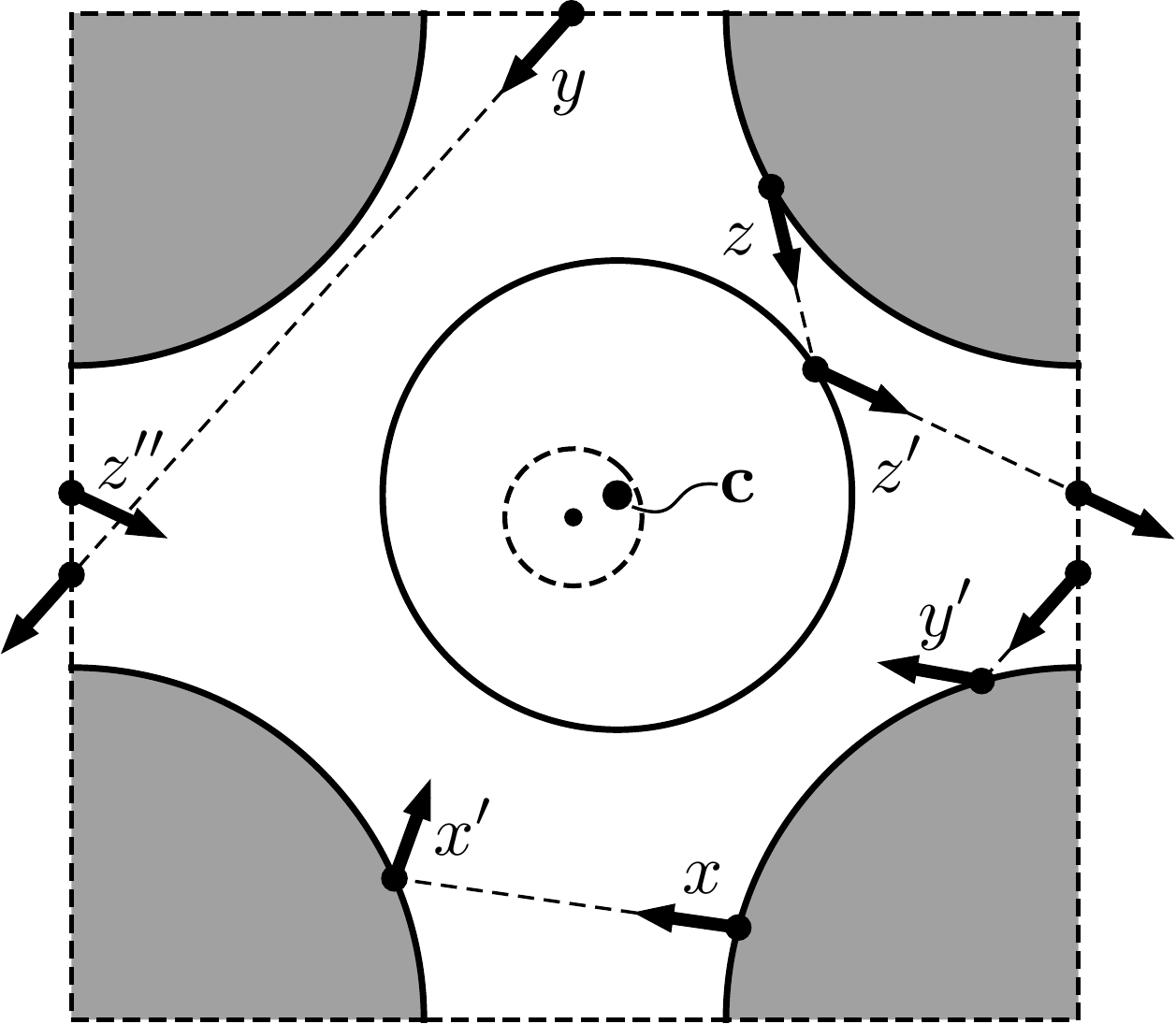}
\caption{The action of the billiard map $F_\bfc$ corresponding to the white disk being centered at $\bfc$. We have $x' = F_\bfc(x)$ and $y' = F_\bfc(y)$. On the other hand, $z' = F_\bfc^*(z)$ and $z'' = F_\bfc^*(z') = F_\bfc(z)$.}
\label{fig:rules}
\end{center}
\end{figure}

We will use the shorthand notation $\{n_\bfc = n\} \equiv \{x\in\cM^*\,:\,n_\bfc(x) = n\}$ with $n=1,2$. Notice that
\beqn
\cM^* = \{n_\bfc = 1\} \cup \{n_\bfc = 2\} 
\eeqn
and
\beq\label{eq:n_c=2}
\{n_\bfc = 2\} = (F_\bfc^*)^{-1}(\cM^*\setminus\cM) \subset \cM
\eeq
hold true by construction, and that
\beqn
F_\bfc^*(\{n_\bfc = 2\}) \subset \{n_\bfc = 1\}.
\eeqn

\subsection{Invariant measure}
We may interpret any $x\in\cM$ as the velocity vector of a billiard particle. Reversing the direction of the velocity to its opposite amounts to mapping $x$ to another point $\cI(x)\in\cM$. It is easy to check that the map $\cI$ is an involution on $\cM$, i.e., it is bijective (taking the earlier identifications at the corners into account) and $\cI^2$ is the identity map of $\cM$. 

Picking an arbitrary element $y\in\cM$, it is possible to trace the billiard trajectory leading to $y$ \emph{backwards}, by following the trajectory starting from $\cI(y)$ \emph{forwards}. By construction, the latter will eventually ``make a return'' to $\cM$, precisely at $F_\bfc(\cI(y))$. Reversing the direction of the velocity once more, we obtain the element $x = \cI(F_\bfc(\cI(y)))\in\cM$. It is clear that $F_\bfc(x) = y$ and that $x$ is the unique preimage of $y$. This is to say that the billiard map $F_\bfc:\cM\to\cM$ is bijective, the inverse billiard map being given by 
\beq\label{eq:inverse}
F_\bfc^{-1} = \cI\circ F_\bfc\circ \cI.
\eeq
It is also a standard fact that~$F_\bfc = \Phi_\bfc^{T_\bfc}$ preserves the probability measure
\beqn
\rd\mu(r,\varphi) = M^{-1} \cos\varphi\,\rd r\,\rd\varphi
\eeqn
on~$\cM$, where $M = \int_\cM \cos\varphi\,\rd r\,\rd\varphi$ is the normalizing factor, for all admissible $\bfc$. (This is a consequence of the general fact that Hamiltonian flows preserve the Liouville measure.) We point out that both of these properties fail generically if the cross section is constructed on the boundaries of \emph{moving} scatterers; see~\cite{StenlundYoungZhang_2012} for examples. 
This is the first of the two fundamental reasons we defined the cross section~$\cM$ in the specific way that we did, and resorted to using non-moving transparent walls. The second one is related to guaranteeing uniform hyperbolicity of the maps~$F_\bfc$, as explained in Section~\ref{sec:hyperbolicity}; see Remark~\ref{rem:non-uniform} in particular.


\section{Main results}\label{sec:Result}
Given a sequence $(\bfc_n)_{n\geq 0}$ of admissible centerings,
we write 
\beq\label{eq:composition}
\cF_n(x) = \cF_n((\bfc_n)_{n\geq 0},x) = F_{\bfc_{n-1}}\circ\dots\circ F_{\bfc_0}(x), \qquad x\in\cM,
\eeq
for short, with the convention that $\cF_0$ is the identity map $\mathrm{id}_\cM$ of $\cM$. The $\bR^d$-valued almost sure invariance principle has to do with approximation of the  sums 
\beqn
\sum_{i=0}^{n-1} \bfA_i\bigl((\bfc_j)_{j\geq 0},x\bigr) = \sum_{i=0}^{n-1}\bff\bigl(\bfc_i,\cF_i\bigl((\bfc_j)_{j\geq 0},x\bigr)\bigr), \qquad n\geq 0,
\eeqn
suitably scaled, by Brownian paths in $\bR^d$, given an $\bR^d$-valued observable $\bff$ together with a random sequence of admissible centerings $\bfc_n$ of the white disk. The strength of the statement is in that the approximation converges \emph{almost surely}. Our result will concern, possibly non-stationary, random sequences $(\bfc_n)_{n\geq 0}$, where the random variables~$\bfc_n$ can be weakly dependent. Before stating the theorem, we introduce the assumptions on the sequences~$(\bfc_n)_{n\geq 0}$.

First, we recall a notion of weak dependence from probability theory.  Suppose a probability space and two sub-sigma-algebras $\fA$ and $\fB$ are given. The so-called maximal correlation coefficient of $\fA$ and $\fB$ is given by 
\beqn
\rho(\fA,\fB) = \sup\bigl\{|\operatorname{Corr}{(f,g)}|\,:\,f\in L^2(\fA;\bR),\,g\in L^2(\fB;\bR)\bigr\}.
\eeqn
It can be shown \cite{Bradley_2005,Withers_1981} that
\beq\label{eq:rho}
\rho(\fA,\fB) = \sup \left\{\frac{|\E(fg)-\E(f)\E(g)|}{\|f\|_2\|g\|_2}\,:\,f\in L^2(\fA;\bC),\,g\in L^2(\fB;\bC)\right\}.
\eeq
(Here $L^2(\fA;\bR)$ is the space of real-valued, $\fA$-measurable, square-integrable functions, etc.)
Note that the supremum in~\eqref{eq:rho} is taken over \emph{complex}-valued functions and that $0\leq \rho(\fA,\fB)\leq 1$.
Consider next a random sequence $(X_n)_{n\geq 0}$ and the probability space of its trajectories. For $n\geq m\geq 0$, let $\fF_m^n$ be the sub-sigma-algebra generated by the variables $X_m,\dots,X_n$. The so-called rho-mixing coefficients are
\beqn
\rho(k) = \sup_{n\geq 0}\rho(\fF_0^n,\fF_{n+k}^\infty), \quad k\geq 0.
\eeqn
The sequence $(X_n)_{n\geq 0}$, or its probability distribution, is called \emph{rho-mixing}, if $\lim_{k\to\infty}\rho(k) = 0$. Obviously, a sequence of independent random variables is rho-mixing with $\rho(k)=0$ for $k\geq 1$. More generally, the same is true of $M$-dependent sequences, for any $M\in\bN$. A stationary Markov chain is rho-mixing if it has the $L^2$-spectral-gap property, and in this case~$\rho(k)$ tends to zero exponentially~\cite{Rosenblatt_1971}. Moreover, if a Markov chain, stationary or not, satisfies $\rho(k_0)<1$ for some $k_0\geq 1$, then $\rho(k)$ tends to zero exponentially~\cite{Rosenblatt_1971,Bradley_2005}. 

\medskip

Given $\ve>0$ and a probability distribution $\bP$ of the random sequence~$(\bfc_n)_{n\geq 0}$ on~$(B_\varepsilon(\mathbf{0}))^\bN$ endowed with the product Borel sigma-algebra $\fF$, our first assumption is the following.

\medskip
\noindent{\bf(A1)}~{\it $\bP$ is exponentially rho-mixing. That is, there exist $c>0$ and $B>0$ such that 
\beqn
\rho(k)\leq Be^{-ck},\qquad k\geq 0.
\eeqn}

Assumption {\bf(A1)} is sufficient in the stationary case. In order to deal with non-stationary sequences, we impose a further condition which holds automatically in the stationary context. It is related to the concept of \emph{asymptotic \underline{mean} stationarity} (see, e.g., \cite{Gray_2009} and the proof of Lemma~\ref{lem:coboundary} below), which means that the measures $\bP_k \equiv \tfrac{1}{k}\sum_{j=0}^{k-1} (\sigma^j)_*\,\bP$, $k\geq 1$, obtained by \emph{averaging} over shifted sequences, tend to a measure $\bar\bP$ which is (necessarily) stationary. Here and below, $\sigma$ is the left shift on $(B_\varepsilon(\mathbf{0}))^\bN$.

Given a measurable function $g:(B_\varepsilon(\mathbf{0}))^\bN\to\bR$, let us introduce the shorthand notation
\beq\label{eq:mean}
\langle g \rangle_k \equiv \frac{1}{k}\sum_{j=0}^{k-1}\bE (g\circ\sigma^j), \qquad k\geq 1,
\eeq
where $\bE$ is the expectation relative to $\bP$. Our second assumption is the following.

\medskip
\noindent{\bf(A2)}~{\it For any bounded measurable function~$g$, there exists a number $\langle g \rangle_\infty\in\bR$ such that $\lim_{k\to\infty} \langle g \rangle_k = \langle g\rangle_\infty$. There exist sequences $(C_m)_{m\geq 0}$ and $(r_k)_{k\geq 0}$ of positive numbers with $r_k\to 0$ such that, for any $m\geq 0$ and any bounded~$\fF_0^m$-measurable function~$g_m$, 
\beq\label{eq:AMS_rate}
| \langle g_m\circ \sigma^\ell \rangle_k - \langle g_m \rangle_\infty | \leq C_m\,r_k\,\|g_m\|_\infty, \qquad k\geq 1,\,\ell \geq 0.
\eeq
Finally, for any $\beta>0$, there exists a constant $D_\beta>0$ such that
\beq\label{eq:AMS_fast}
\sum_{m=0}^{\beta^{-1}\log n} C_m\, r_{n-m} \, e^{-\beta m} \leq \frac{D_\beta \log n}{n}
\eeq
holds whenever $n>\beta^{-1}\log n$ (so that $r_{n-\beta^{-1}\log n}$ on the left side makes sense).}
\medskip

We make the remark about assumption {\bf(A2)} that if $\langle g_m\circ\sigma^\ell \rangle_k$ and $\langle g_m \rangle_k$ converge as $k\to\infty$, they obviously converge to the same limit (Lemma~\ref{lem:AMS}) for all $\ell\geq 0$, but~\eqref{eq:AMS_rate} is needed for a uniform rate of convergence and~\eqref{eq:AMS_fast} for the rate to be sufficiently fast.
\medskip

We are in position to state our main result. As usual, also our billiard maps have singularities consisting of a finitely many smooth curves in $\cM$. For a (non-random) $\bfc$, denote by $\cS_\bfc$ the singularity set of $F_\bfc$; see Section~\ref{sec:sing}. For later convenience, we switch to writing $\omega = (\omega_n)_{n\geq 0}$ for a realization of the random sequence~$(\bfc_n)_{n\geq 0}$. That is, $\omega_n$ denotes the realized position of the center of the white disk at time $n$.

\begin{thm}\label{thm:ASIP}
There exists a number $\ve>0$ such that the following assumptions have the following consequences.

\medskip
\noindent -- \textit{\textbf{Assumptions:}} Let  $\bP$ be a probability distribution on~$(B_\varepsilon(\mathbf{0}))^\bN$ satisfying {\bf(A1)} and {\bf(A2)}. Write $\rP=\bP\otimes\mu$ and $\rE(\slot) = \int(\slot)\,\rd\rP$. Let $\Omega_0\subset B_\varepsilon(\mathbf{0})$ be a measurable set with $\bP((\Omega_0)^\bN)=1$.
Let $\bff:\Omega_0\times\cM\to\bR^d$ (any $d\geq 1$) be a bounded measurable function such that, for all $\bfc\in\Omega_0$, $\bff(\bfc,x)$ is defined for $\mu$-almost-every $x\in\cM$ and $\int \bff(\bfc,x)\,\rd\mu(x)=\mathbf{0}$. Let the maps $\bff(\bfc,\slot)$, $\bfc\in\Omega_0$, be uniformly piecewise H\"older continuous in the sense that there exist $\gamma>0$ and $C_\bff>0$ for which
\beqn
|\bff(\bfc,x)-\bff(\bfc,y)| \leq C_\bff\,d(x,y)^\gamma
\eeqn
for all $x,y$ in the same component of $\cM\setminus\cS_{\bfc}$, for all $\bfc\in\Omega_0$. 
Denote  $\bfA_k\bigl(\omega,x\bigr) = \bff\bigl(\omega_k,\cF_k\bigl(\omega,x\bigr)\bigr)$.

\pagebreak[2]

\medskip
\noindent -- \textit{\textbf{Consequences:}}
\begin{enumerate}
\item 
The formula 
\beq\label{eq:covariance_formula}
\begin{split}
\bfSigma^2 & \equiv
\lim_{k\to\infty}\frac{1}{k} \sum_{\ell=0}^{k-1} \int \bigl(\bfA_0\otimes\bfA_0\bigr)(\sigma^\ell\omega,x) \,\rd\rP(\omega,x)
\\
&\qquad + \sum_{m=1}^{\infty} \, \lim_{k\to\infty}\frac{1}{k} \sum_{\ell=0}^{k-1} \int \bigl(\bfA_0\otimes\bfA_{m} + \bfA_{m}\otimes\bfA_0\bigr)(\sigma^\ell\omega,x)\,\rd\rP(\omega,x)
\end{split}
\eeq
yields a well-defined, symmetric, semi-positive-definite, $d\times d$ matrix~$\bfSigma^2$.
\medskip
\item The matrix $\bfSigma^2$ is the limit covariance of $\frac1{\sqrt n}\sum_{k=0}^{n-1}\bfA_k$. That is, 
\beqn
\lim_{n\to\infty} \frac1n\,\rE\!\left(\sum_{k=0}^{n-1}\bfA_k \otimes \sum_{k=0}^{n-1}\bfA_k\right)=\bfSigma^2.
\eeqn
\medskip
\item The random variables $\frac1{\sqrt n}\sum_{k=0}^{n-1}\bfA_k$ converge in distribution, as $n\to\infty$, to a centered $\bR^d$-valued normal random variable with covariance $\bfSigma^2$.
\medskip
\item Given any $\lambda>\frac14$, there exists a probability space together with two $\bR^d$-valued processes $(\bfA^*_n)_{n\geq 0}$ and $(\bfB_n)_{n\geq 0}$ on it, for which the following statements are true:
\medskip
\begin{enumerate}
\item $(\bfA_n)_{n\geq 0}$ and $(\bfA^*_n)_{n\geq 0}$ have the same distribution.
\medskip
\item The random variables $\bfB_n$, $n\geq 0$, are independent, centered, and normally distributed with covariance $\bfSigma^2$.
\medskip
\item Almost surely, $|\sum_{k=0}^{n-1}\bfA^*_k - \sum_{k=0}^{n-1}\bfB_k| = o(n^\lambda)$.  \footnote{As is customary, given a sequence $(a_n)_{n\geq 0}$, the notation $a_n = o(n^\lambda)$ means that $\lim_{n\to\infty} n^{-\lambda} a_n = 0$.}
\end{enumerate}
\end{enumerate}
\end{thm}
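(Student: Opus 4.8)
\medskip
\noindent\textbf{Strategy of the proof.}\quad
The plan is to deduce consequences~(2), (3) and~(4) from the abstract almost sure invariance principle of Gou\"ezel~\cite{Gouezel_2010}, whose hypotheses are a bound on the limiting covariance together with a quantitative estimate on the characteristic function of block sums of the process $(\bfA_k)_{k\ge 0}$ expressing that well-separated blocks decouple. Consequence~(1), the existence and structure of $\bfSigma^2$, will be established first, directly. Throughout, the essential dynamical input is that each admissible $F_\bfc$ is \emph{uniformly} hyperbolic (by the choice of cross section) and enjoys exponential decay of correlations for piecewise H\"older observables in the uniform time-dependent sense of~\cite{StenlundYoungZhang_2012,ChernovMarkarian_2006}, while the randomness in the configuration sequence is controlled through~{\bf(A1)} and~{\bf(A2)}.

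First I would prove~(1) and~(2). Writing $S_n(\omega,x)=\sum_{k=0}^{n-1}\bfA_k(\omega,x)$, one expands $\tfrac1n\rE(S_n\otimes S_n)=\tfrac1n\sum_{i,j=0}^{n-1}\rE(\bfA_i\otimes\bfA_j)$ and separates the terms with $|i-j|$ small from the rest. For fixed $\omega$, the map $\cF_k(\omega,\slot)$ is a composition of admissible billiard maps, so the time-dependent standard-pair/coupling machinery gives exponential decay of $\int(\bfA_i\otimes\bfA_j)(\omega,x)\,\rd\mu(x)$ in $|i-j|$, uniformly in $\omega$, using the uniform piecewise H\"older continuity of $\bff(\bfc,\slot)$ and $\int\bff(\bfc,\slot)\,\rd\mu=\mathbf{0}$; integrating over $\omega$ and invoking the exponential rho-mixing~{\bf(A1)} contributes a further decaying factor in $|i-j|$. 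Non-stationarity forces the Ces\`aro averages appearing in~\eqref{eq:covariance_formula}; these converge by~{\bf(A2)} (cf.\ Lemma~\ref{lem:AMS} and Lemma~\ref{lem:coboundary}), and the rate~\eqref{eq:AMS_fast} makes the error in replacing $\tfrac1n\rE(S_n\otimes S_n)$ by its limit $o(1)$. Symmetry and semi-positive-definiteness of $\bfSigma^2$ are automatic, being inherited from the matrices $\tfrac1n\rE(S_n\otimes S_n)$.

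The heart of the matter is the characteristic-function bound required by~\cite{Gouezel_2010}: for disjoint intervals $[a_1,b_1],\dots,[a_p,b_p]$ in $\bN$ separated by gaps of length at least $g$ and for $t_1,\dots,t_p\in\bR^d$ of bounded norm, one must establish an estimate of the form
\beqn
\Bigl| \rE\Bigl(e^{\,\imag\sum_{\ell=1}^{p}\langle t_\ell,\,S_{[a_\ell,b_\ell]}\rangle}\Bigr) - \prod_{\ell=1}^{p}\rE\Bigl(e^{\,\imag\langle t_\ell,\,S_{[a_\ell,b_\ell]}\rangle}\Bigr)\Bigr| \le C\,\Bigl(\textstyle\sum_{\ell}(b_\ell-a_\ell)\Bigr)\,e^{-cg},
\eeqn
together with the matching of second moments to $\bfSigma^2$ at the precision that~\cite{Gouezel_2010} demands. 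This is precisely a \emph{multiple correlation} estimate: for fixed $\omega$ each factor $e^{\imag\langle t_\ell,S_{[a_\ell,b_\ell]}\rangle}$ is bounded but not H\"older, so one first approximates it by an admissible test function and then propagates the coupling between consecutive blocks through the composition $\cF_k(\omega,\slot)$, gaining the factor $e^{-cg}$ from the dynamical contraction accumulated over the gap; a parallel decoupling in the configuration variables is furnished by~{\bf(A1)}, and the non-stationary averaging is absorbed using~{\bf(A2)}. Assembling these uniform multiple-correlation bounds in Section~\ref{sec:Proof} and feeding them, with $\bfSigma^2$ from~(1), into~\cite{Gouezel_2010} produces a probability space carrying $(\bfA^*_n)$ and $(\bfB_n)$ with $(\bfA^*_n)$ equal in distribution to $(\bfA_n)$, the $\bfB_n$ independent centered Gaussians of covariance $\bfSigma^2$, and $|\sum_{k<n}\bfA^*_k-\sum_{k<n}\bfB_k|=o(n^\lambda)$ almost surely for every $\lambda>\tfrac14$. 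Consequence~(3) then follows either from~(4) or directly from the $p=1$ instance of the above bound together with~(2).

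The main obstacle I expect is proving the multiple correlation bound \emph{uniformly} over the non-stationary family of compositions. The two sources of mixing---the billiard dynamics with its unavoidable singularity curves and fragmentation of unstable manifolds, and the weak dependence of the configuration sequence---must be controlled simultaneously, with errors summable in exactly the way~\cite{Gouezel_2010} requires; keeping all constants independent of the block lengths and of the position of the blocks along the sequence, while replacing the non-H\"older exponential factors by admissible test functions without degrading the rates, is where the real work lies. The relevant growth lemmas and coupling estimates have to be invoked in the uniform time-dependent form of~\cite{StenlundYoungZhang_2012}, and the interface between that machinery and the rho-mixing/mean-stationarity input of~{\bf(A1)}--{\bf(A2)} is the delicate point.
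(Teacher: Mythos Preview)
Your overall strategy---verifying Gou\"ezel's condition via multiple correlation bounds, with the covariance handled separately through~{\bf(A2)}---is exactly what the paper does. But there is a concrete misstep in how you propose to handle the exponential factors.

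You write that ``each factor $e^{\imag\langle t_\ell,S_{[a_\ell,b_\ell]}\rangle}$ is bounded but not H\"older, so one first approximates it by an admissible test function.'' This is where the paper does something smarter, and where your route would get hard. The paper introduces spaces $\cH_\pm(\omega;\vartheta)$ of \emph{dynamically} H\"older functions (regularity measured by separation time, not Euclidean distance). In these spaces two things happen: first, since $|e^{\imag a}-e^{\imag b}|\le|a-b|$, each single factor $e^{\imag t_j\cdot\bff(\omega_i,\slot)}$ is already dynamically H\"older with constant $\le t\bar K$; second---and this is the crucial lemma (Lemma~\ref{lem:products_regular})---a product $\prod_i f_i\circ\cF_i(\omega,\slot)$ of such functions remains dynamically H\"older with a constant bounded by a convergent geometric series, \emph{independent of the number of factors}. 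So the block exponentials are admissible test functions outright, no approximation needed, and the multiple correlation bound (Corollary~\ref{cor:multiple_bound}) reduces directly to the pair correlation bound (Theorem~\ref{thm:pair_bound}). If instead you tried to approximate $e^{\imag\langle t_\ell,S_{[a_\ell,b_\ell]}\rangle}$ by an ordinary H\"older function, the H\"older constant of $S_{[a_\ell,b_\ell]}$ grows with the block length, and keeping the resulting error terms summable at the rate Gou\"ezel needs is not at all obvious.

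Two smaller points. The paper's interface between dynamical and configurational mixing is the clean identity
\[
\operatorname{Cov}_{\bP\otimes\mu}(F,G)=\int\operatorname{Cov}_\mu(F,G)\,\rd\bP+\operatorname{Cov}_\bP\!\Bigl(\int F\,\rd\mu,\int G\,\rd\mu\Bigr),
\]
with the first term handled by the uniform multiple correlation bound and the second by~{\bf(A1)} alone (since $\int F\,\rd\mu$ and $\int G\,\rd\mu$ are $\fF_0^{b_{n+1}-1}$- and $\fF_{b_{n+1}+k}^\infty$-measurable). Also, Gou\"ezel's actual condition~\eqref{eq:Gouezel} compares only two groups across a single gap, not a full product over all blocks as you wrote; and for the covariance convergence in~(1)--(2) the paper uses only the uniform-in-$\omega$ pair bound plus~{\bf(A2)}, not~{\bf(A1)}.
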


Item (3) of the theorem is called the \emph{averaged (or annealed) central limit theorem} and
item (4) the \emph{vector-valued almost sure invariance principle with covariance~$\bfSigma^2$ and error exponent~$\lambda$}. The ``almost surely'' in item (c) refers to the probability space on which the processes~$(\bfA^*_n)_{n\geq 0}$ and~$(\bfB_n)_{n\geq 0}$ are defined. Since $\sum_{k=0}^{n-1}\bfB_k$ has the interpretation of the location of an $\bR^d$-valued Brownian motion at time $n$, the result is indeed about Brownian approximations of the sequences $(\sum_{k=0}^{n-1}\bfA_k)_{n\geq 0}$. Accordingly, \cite{Strassen_1964,Billingsley_convergence,PhilippStout_1975} discuss several implications of the almost sure invariance principle, and \cite{LaceyPhilipp_1990} shows that the almost sure central limit theorem is among them.

Note that the centerings $\bfc_n$ in the theorem could for example be drawn independently, from the same distribution; the distribution could be uniform in the disk of radius $\ve$, which in a sense corresponds to maximal randomness, or it could be a delta-distribution corresponding to a completely fixed white disk as in Sinai billiards. 

Regarding the regularity condition imposed on the observable, the inter-return flight time and vector displacement of the particle lifted to the plane both satisfy it \cite{ChernovMarkarian_2006}.

The expression for the covariance in~\eqref{eq:covariance_formula} is interesting in that it applies to the \emph{non-stationary} case. In addition to the expectations with respect to the distribution $\rP = \bP\otimes\mu$, the terms in the infinite series entail averaging over the trajectories $(\sigma^\ell \omega)_{\ell\geq 0}$ of the sequences~$\omega$ under the left shift~$\sigma$. In the stationary case the averaging is redundant and the expression of the covariance simplifies to the well-known series $\rE(\bfA_0\otimes\bfA_0)+ \sum_{m=1}^{\infty} \rE (\bfA_0\otimes\bfA_{m} + \bfA_{m}\otimes\bfA_0)$. The matrix~$\bfSigma^2$ can be degenerate for a given observable~$\bff$. To characterize such a situation, we prove the next lemma in Section~\ref{sec:proof_coboundary}, following the classical works of Robinson~\cite{Robinson_1960} and Leonov~\cite{Leonov_1961}. 
\begin{lem}\label{lem:coboundary}
Let $\bff$ and $\bfSigma^2$ be as in Theorem~\ref{thm:ASIP}. Denote by $\Phi$ the skew-product map defined as $\Phi(\omega,x) = (\sigma\omega,F_{\omega_0}(x))$. The matrix $\bfSigma^2$ is degenerate if and only if there exists a (constant) vector $\bfv\in\bR^d$ and a measurable function $g:(\Omega_0)^\bN\times\cM \to \bR$ such that, for all $j\geq 0$, the map $(\omega,x)\mapsto g(\sigma^j\omega,x)$ belongs to $L^2(\rP)$,
$
\lim_{k\to\infty}\frac1k \sum_{j=0}^{k-1} \int | g(\sigma^j\omega,x) |^2\,\rd \rP(\omega,x) < \infty,
$
and the identity
$
\bfv^\rT \bff = g-g\circ \Phi
$
holds except on a measurable set $E\subset (\Omega_0)^\bN\times\cM$ which is asymptotically negligible in the sense that $\lim_{k\to\infty}\frac1k \sum_{j=0}^{k-1}\rP(\sigma^{-j}E) = 0$.
\end{lem}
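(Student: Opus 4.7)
My plan is to stationarize the system and reduce the statement to the classical Robinson--Leonov characterization of zero asymptotic variance. First I would verify that the Cesaro averages $\bP_k$ converge to a shift-invariant probability measure $\bar\bP$: applying \textbf{(A2)} with $\ell=0$ gives $\int g\,d\bP_k=\langle g\rangle_k\to\langle g\rangle_\infty$ for every bounded measurable $g$, and a short manipulation of the Cesaro sum shows that the resulting limit functional is $\sigma$-invariant. The product $\bar\rP\defas\bar\bP\otimes\mu$ is then $\Phi$-invariant, because $\mu$ is preserved by every admissible $F_\bfc$ while $\bar\bP$ is $\sigma$-invariant.

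Second, I would identify $\bfv^\rT\bfSigma^2\bfv$ with the asymptotic variance of the Birkhoff sums of $h\defas\bfv^\rT\bff$ over the stationary system $(\Phi,\bar\rP)$. Using $\bfA_m=\bff\circ\Phi^m$ and reading each Cesaro average $\tfrac1k\sum_{\ell=0}^{k-1}\int F(\sigma^\ell\omega,x)\,d\rP$ appearing in~\eqref{eq:covariance_formula} as $\int F\,d\bar\rP$ for bounded measurable $F$, the matrix identity specializes to
\beqn
\bfv^\rT\bfSigma^2\bfv=\int h^2\,d\bar\rP+2\sum_{m=1}^{\infty}\int h\,(h\circ\Phi^m)\,d\bar\rP=\lim_{n\to\infty}\frac{1}{n}\int\!\Bigl(\sum_{k=0}^{n-1}h\circ\Phi^k\Bigr)^{\!2}d\bar\rP.
\eeqn
Absolute convergence of the series and the legitimacy of exchanging the limits rest on the exponential decay of multiple correlations that the paper establishes on its way to Theorem~\ref{thm:ASIP}.

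Third, with $(\Phi,\bar\rP)$ stationary and $h\in L^\infty(\bar\rP)$ of zero $\bar\rP$-mean (since $\int\bff(\bfc,\slot)\,d\mu=\mathbf{0}$ for every $\bfc\in\Omega_0$), I would apply the classical Robinson--Leonov dichotomy to the Koopman operator $Uf=f\circ\Phi$ on $L^2(\bar\rP)$. The spectral theorem gives $\lim_n \tfrac1n\|\sum_{k=0}^{n-1}U^k h\|_{L^2(\bar\rP)}^2=0$ if and only if $h\in\overline{(I-U)L^2(\bar\rP)}$, and the Leonov refinement produces an honest coboundary representative: under the vanishing hypothesis the truncated series $g=\lim_n\sum_{k=0}^{n-1}(1-k/n)\,U^k h$ converges in $L^2(\bar\rP)$ and satisfies $h=g-g\circ\Phi$ $\bar\rP$-almost everywhere. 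The converse is the telescoping identity $\sum_{k=0}^{n-1}U^k h=g-U^n g$, which yields $\tfrac{1}{n}\|\sum_{k=0}^{n-1}U^k h\|_{L^2(\bar\rP)}^2\leq\tfrac{4}{n}\|g\|_{L^2(\bar\rP)}^2\to 0$.

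Finally, I would translate the $L^2(\bar\rP)$ statements back into the form of the lemma. Since $\mu$ is the $x$-fiber marginal of both $\rP$ and $\bar\rP$, membership $g\in L^2(\bar\rP)$ is precisely the Cesaro integrability condition $\lim_k\tfrac1k\sum_{j=0}^{k-1}\int|g(\sigma^j\omega,x)|^2\,d\rP<\infty$, while a $\bar\rP$-null set $E$ is exactly one satisfying $\tfrac1k\sum_{j=0}^{k-1}\rP(\sigma^{-j}E)\to 0$. The main technical obstacle --- and what distinguishes the non-stationary case from the classical Robinson--Leonov theorem --- is that the coboundary is naturally obtained on $(\Phi,\bar\rP)$ whereas the physical dynamics and the statement of Theorem~\ref{thm:ASIP} live on $\rP$. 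In particular, the lemma's requirement that each individual shift $(\omega,x)\mapsto g(\sigma^j\omega,x)$ lie in $L^2(\rP)$ does not follow from $g\in L^2(\bar\rP)$ alone; I would secure it by a preliminary truncation of $h$ together with \textbf{(A2)} to control the tails of $(\sigma^j)_*\bP$ against $\bar\bP$, and this same passage between $\rP$ and $\bar\rP$ is precisely the reason the exceptional set in the lemma vanishes only in Cesaro mean rather than $\rP$-almost surely.
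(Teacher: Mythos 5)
Your architecture is the same as the paper's: form the stationary mean $\bar\bP=\lim_k\frac1k\sum_{j<k}(\sigma^j)_*\bP$ via {\bf(A2)}, note that $\bar\rP=\bar\bP\otimes\mu$ is $\Phi$-invariant, identify $\bfv^\rT\bfSigma^2\bfv$ with the asymptotic variance of $h=\bfv^\rT\bff$ for the stationary system $(\Phi,\bar\rP)$, and invoke the Robinson--Leonov dichotomy; the converse by telescoping and the translation of the $L^2(\bar\rP)$ and $\bar\rP$-null conditions into Ces\`aro conditions on $\rP$ also match the paper. Your remark that membership of each individual shift $g(\sigma^j\cdot,\cdot)$ in $L^2(\rP)$ needs a separate argument is a fair observation which the paper handles only tersely.

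There is, however, one step that fails as written. In the forward direction you pass from $\bfv^\rT\bfSigma^2\bfv=0$, i.e.\ $\lim_n\frac1n\bigl\|\sum_{k=0}^{n-1}h\circ\Phi^k\bigr\|_{L^2(\bar\rP)}^2=0$, directly to convergence of the Ces\`aro-truncated series and hence to an honest coboundary $h=g-g\circ\Phi$ with $g\in L^2(\bar\rP)$. But vanishing of the normalized variance is equivalent only to $h\in\overline{(I-U)L^2(\bar\rP)}$ for your Koopman operator $U$, and the closure of the coboundary space is in general strictly larger than the space itself: there are classical examples of functions in $\overline{(I-U)L^2}$ that are not $L^2$-coboundaries. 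Leonov's lemma is not a ``refinement'' of this spectral characterization; it has the strictly stronger hypothesis that $\|\sum_{k=0}^{n-1}h\circ\Phi^k\|_{L^2(\bar\rP)}$ remains \emph{bounded}. To supply that hypothesis one must first prove the uniform estimate $\sup_n\|\bar\rE(\bfS_n\otimes\bfS_n)-n\bfSigma^2\|<\infty$, which the paper derives from the $\Phi$-invariance of $\bar\rP$ together with the exponential decay $\|\bar\rE(\bfA_0\otimes\bfA_m+\bfA_m\otimes\bfA_0)\|\le Ce^{-\beta m}$ furnished by the pair correlation bound; only then does degeneracy of $\bfSigma^2$ give $\operatorname{Var}_{\bar\rP}\bigl(\sum_{k=0}^{n-1}h\circ\Phi^k\bigr)=O(1)$ and hence a genuine coboundary. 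You already invoke the correlation decay to justify the series expression for $\bfv^\rT\bfSigma^2\bfv$, so the repair is short, but the deduction ``asymptotic variance zero $\Rightarrow$ coboundary'' is not valid as stated.
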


Despite the if-and-only-if statement, Lemma~\ref{lem:coboundary} is not very practical, because $g$ is only known to belong to the large class $L^2(\rm P)$. It would be interesting to obtain a Livschitz (Liv\v{s}ic) type rigidity result guaranteeing some degree of regularity of $g$; see, e.g.,~\cite{HollandMelbourne_2007,MelbourneNicol_2009,NicolPersson_2012}.

\begin{remark}\label{rem:generalizations}
Theorem~\ref{thm:ASIP} does not depend on the assumption that the scatterers are disk-shaped. The same proof continues to work for more general, strictly convex, shapes of the fixed ``gray'' scatterer and the moving ``white'' scatterer having at least $C^3$-smooth boundaries, because \cite{StenlundYoungZhang_2012} applies to such geometric objects just the same. Of course, it is still necessary that the free path length between the solid scatterers is uniformly bounded from above and below by positive numbers. (This was guaranteed by the no-overlap and finite horizon conditions above.) In order not to have to upgrade the proof, the billiard trajectory should also not hit the white scatterer more than once (or, a bit more generally, a uniformly bounded number of times) between successive returns to the cross section~$\cM$. (This was guaranteed by the free zone condition.) The last thing one has to make sure of is the preservation of uniform hyperbolicity despite the transparent walls by a suitable choice of~$\cM$; between successive \emph{returns} involving one to a transparent wall, the flight time and the angle of the velocity at the transparent wall should be uniformly bounded. (This followed from the clean pass condition; see~\eqref{eq:transparent_cosine}.) Finally, besides just translations, rotations as well as changes to the shape of the white scatterer can be allowed, provided they are uniformly sufficiently small, because \cite{StenlundYoungZhang_2012} applies equally to that scenario. We leave the formulation to the reader.
\end{remark}


The next result concerning \emph{fixed} scatterer configurations (Sinai billiards) and \emph{random observables} can be deduced from the very same proof as Theorem~\ref{thm:ASIP}, simply switching to the familiar cross section $\cN$ corresponding to returns of the billiard flow to the solid boundaries of the scatterers. Since the cross section to be used is clear from the beginning, we formulate the theorem in a rather general geometry (not assuming circular boundaries). We also assume a very modest amount of regularity of the observables, namely dynamical H\"older continuity in the stable and unstable directions separately. (In particular, ``ordinary'' H\"older continuity suffices. See Section~\ref{sec:proof_defs} and Remark~\ref{rem:fixed_dyn_Holder} there for a discussion of dynamical H\"older continuity.)


\begin{thm}\label{thm:fixed_ASIP}
Consider a Sinai billiard on the two-dimensional torus with strictly convex scatterers having $C^3$-smooth boundaries. Assume the free path length is bounded from above and below by positive numbers.
Let $F:\cN\to\cN$ denote the usual billiard map; see the paragraph above. Let~$\Omega_0$ be an index set, $d>0$ a fixed integer, and $\bff:\Omega_0\times\cN\to\bR^d$ a bounded measurable map with the property that (each vector component of) the map $\bff(\omega_0,\slot):\cN\to\bR^d$ is dynamically H\"older continuous on homogeneous local stable and unstable manifolds, with uniform parameters for all $\omega_0\in\Omega_0$. Let  $\bP$ be a probability distribution on the space $(\Omega_0)^\bN$ of sequences~$\omega$ satisfying~{\bf(A1)} and~{\bf(A2)}.  Denote $\bfA_k(\omega,x) = \bff(\omega_k,F^k(x))$, $k\geq 0$. Also write $\rP=\bP\otimes\mu$ and~$\rE(\slot)=\int(\slot)\,\rd\rP$. Then the consequences listed in Theorem~\ref{thm:ASIP} and Lemma~\ref{lem:coboundary} hold.
\end{thm}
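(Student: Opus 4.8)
The plan is to run the proof of Theorem~\ref{thm:ASIP} essentially verbatim, the differences being cosmetic: replace the cross section $\cM$ by the solid-boundary cross section $\cN$; replace the family of admissible billiard maps $\{F_\bfc\}$ by the single Sinai billiard map $F$, so that the non-stationary composition $\cF_k(\omega,\slot)$ collapses to the iterate $F^k$; and replace the uniform time-dependent billiard estimates borrowed from \cite{StenlundYoungZhang_2012} by their classical counterparts for a fixed dispersing billiard \cite{Chernov_2006,ChernovMarkarian_2006}. After these substitutions, $\Phi(\omega,x)=(\sigma\omega,F(x))$ is again a skew product over the one-sided shift $\sigma$, the randomness enters only through the $\omega$-argument of the observable, and assumptions {\bf(A1)} and {\bf(A2)} on $\bP$ play exactly the same role as in Theorem~\ref{thm:ASIP}.

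Recall that the proof of Theorem~\ref{thm:ASIP} reduces its four consequences, via the abstract criterion of \cite{Gouezel_2010}, to a package of quantitative estimates on the process $\bfA_k$: a uniform exponential decay-of-correlations bound for pairs of sufficiently regular observables composed with the dynamics, the analogous bounds for multiple correlations, and the combination of these with the exponential rho-mixing and the asymptotic mean stationarity of $\bP$. In the present setting the dynamical half of each such estimate is simply the statement that, for the fixed map $F$ and its invariant measure $\mu$, correlations of functions that are dynamically H\"older continuous on homogeneous local stable and unstable manifolds decay exponentially, with rate and constant controlled by the H\"older data, together with the corresponding multiple-correlation bounds; these are precisely the estimates of \cite{Chernov_2006,ChernovMarkarian_2006} (stated there for scalar observables, but the vector case is componentwise). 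Note that, because the singularity set is now fixed and a single family of homogeneous stable and unstable manifolds serves for all of $\cN$, the natural regularity class is dynamical H\"older continuity rather than the piecewise H\"older continuity forced on us in Theorem~\ref{thm:ASIP} by the $\omega$-dependent singularity sets of the $\cF_k$, which is why the hypothesis on $\bff$ can be, and is, relaxed accordingly; and since $F$ and $\mu$ are fixed, the homogeneity layers, the hyperbolicity cones, and the associated distortion and growth bounds are uniform at no cost, so one need not invoke the uniformity-over-configurations apparatus of \cite{StenlundYoungZhang_2012} at all. Feeding these inputs, together with {\bf(A1)} and {\bf(A2)}, into the block-decomposition and characteristic-function argument of Section~\ref{sec:Proof} verifies the hypotheses of \cite{Gouezel_2010} and yields consequence~(4) for any $\lambda>\tfrac14$; consequences~(2) and~(3) follow as there; consequence~(1) --- the well-definedness and semi-positive-definiteness of $\bfSigma^2$ in~\eqref{eq:covariance_formula} --- is obtained by the same Ces\`aro-averaging-over-shifts argument, using {\bf(A2)} to pass to the limit; and Lemma~\ref{lem:coboundary}, now read with $\Phi(\omega,x)=(\sigma\omega,F(x))$, characterizes the degenerate case verbatim.

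No genuinely new difficulty arises. The only point demanding care is the bookkeeping: one must check that every billiard input used in Section~\ref{sec:Proof} --- not merely decay of correlations but also the multiple-correlation bounds --- has a counterpart for the fixed map $F$ valid at the level of regularity of dynamically H\"older observables on homogeneous stable and unstable manifolds. This is exactly what \cite{Chernov_2006,ChernovMarkarian_2006} supply, so the substantive content of the proof is really the observation that, once the cross section and the dynamical estimates have been exchanged, the probabilistic skeleton of the argument for Theorem~\ref{thm:ASIP} is left literally unchanged.
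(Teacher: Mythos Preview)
Your proposal is correct and matches the paper's own treatment, which explicitly declines a separate proof on the grounds that it would amount to repeating the proof of Theorem~\ref{thm:ASIP} with the simplification that the map and the function spaces no longer depend on time. One minor point: the multiple-correlation bounds need not be sourced externally from \cite{Chernov_2006,ChernovMarkarian_2006}, since Section~\ref{sec:proof_multiple_corr} already derives them from the pair bound via Lemma~\ref{lem:products_regular}, and that derivation specializes to the fixed-map case without change.
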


The author is unaware of other limit theorems for Sinai billiards with time-dependent (here random) observables. Turning to the special case of a fixed observable (that is, $\Omega_0$ contains a single element), the above theorem enlarges the class of observables for which a weak or almost sure invariance principle is known~\cite{Chernov_2006,MelbourneNicol_2005,MelbourneNicol_2009}; the central limit theorem for the same class was proved in~\cite{Stenlund_2010}. A virtue of the result is that the class of observables in question is convenient to work with when it comes to the study of statistical properties; see discussion below. In addition, Theorem~\ref{thm:fixed_ASIP} improves the known error exponent for scalar-valued observables (from $\tfrac38+\epsilon$ to $\tfrac 14+\epsilon$) as well as for vector-valued observables in dimensions $d\geq 2$ (from $\tfrac {2d+3}{4d+7}+\epsilon$ again to $\tfrac 14+\epsilon$)~\cite{MelbourneNicol_2009}. 

Giving a separate proof of Theorem~\ref{thm:fixed_ASIP} would be pointless, because that would amount to repeating the exact proof of Theorem~\ref{thm:ASIP} with the simplification that the map and the function spaces would no longer depend on time. Accordingly, it is also possible to formulate a more elaborate (albeit general) version of Theorem~\ref{thm:ASIP}, in which the observables belong to time-dependent spaces of dynamically H\"older continuous functions (see Section~\ref{sec:proof_defs}); after all, the proof itself is written precisely for such spaces. But for the sake of a more intelligible statement, and to stress what is really relevant and new, we offer Theorem~\ref{thm:ASIP} in its current form.

The proof of Theorem~\ref{thm:ASIP} benefits from a condition formulated recently by Gou\"ezel under which the vector-valued almost sure invariance principle holds for a sequence of random variables. We will recall the condition in Section~\ref{sec:ASIP_proof}. Gou\"ezel's proof relies on a sophisticated variation of a classical blocking argument, in which the marginals of the random process are grouped into blocks separated by a time gap of length $k$. His condition quantifies a sufficient rate for two blocks to become independent, as their separation $k$ increases, for the vector-valued almost sure invariance principle to hold. 

The strategy of verifying Gou\"ezel's condition in the proof of Theorem~\ref{thm:ASIP} involves a mixture of ideas related to \cite{Stenlund_2010,StenlundYoungZhang_2012}. Viewing the model under study in a way amenable to analysis, including how we defined the cross section~$\cM$ and the billiard maps $F_\bfc$ on it, is also very much part of the method. (See Section~\ref{sec:preliminaries} for more on the importance of setting up the problem carefully.) In \cite{Stenlund_2010}, the function spaces considered in \cite{ChernovMarkarian_2006} were enlarged by relaxing the regularity assumptions. To include \emph{less regular observables} in the analysis of the statistical properties of billiards turned out to be fruitful for proving limit theorems. It was shown that, for observables in these enlarged spaces, a pair correlation bound alone implies the central limit theorem. In the present paper we take advantage of similar spaces, but this time in a 
\emph{time-dependent setup}, and prove bounds on correlation functions which imply Gou\"ezel's condition for the invariance principle. The theory of time-dependent billiards has been developed in \cite{StenlundYoungZhang_2012} to the stage that the existence and uniformity of hyperbolic structures, for example, can be taken for granted, \emph{given the deliberate way in which we defined the cross section~$\cM$ and the billiard maps~$F_\bfc$}. Whereas the general paper \cite{StenlundYoungZhang_2012} did not focus on \emph{random} compositions of billiard maps let alone on their limit theorems, the latter form exactly the goal we are now shooting for.


\section{Preliminaries}\label{sec:preliminaries}
In this section we introduce some concepts and facts that enter the proof of Theorem~\ref{thm:ASIP}. We are not going to present proofs or new results, save for Lemma~\ref{lem:hyperbolic}. Due to the slightly unusual choice of~$\cM$, we need to verify the uniform hyperbolicity of the maps $F_\bfc$ --- in the form of Lemma~\ref{lem:hyperbolic} --- by hand. After this has been achieved, we are immediately in position to apply the machinery of time-dependent billiards developed in~\cite{StenlundYoungZhang_2012}.

\subsection{Singularities}\label{sec:sing}
Like in the case of classical billiards, the maps~$F_\bfc$ have singularities. The qualitative behavior of a billiard map changes in arbitrarily small neighborhoods of its singularities. Given a centering $\bfc$, we call
\beqn
\cS_\bfc = F_\bfc^{-1}(\partial\cM) \cup (F_\bfc^*)^{-1}(\partial\cM^*\setminus\partial\cM)
\eeqn
the singularity set of $F_\bfc$. The first member of the union corresponds to those values of $x\in\cM$ for which the billiard trajectory from $x$ to $F_\bfc(x)$ either hits an endpoint of a wall (corner); meets the gray disk or a transparent wall tangentially; or crosses a transparent wall in such a way that it is a return to~$\cM$, but there are arbitrarily small perturbations of the trajectory which are not returns to~$\cM$. The second member of the union corresponds to those $x\in\cM$ for which the trajectory meets the white disk tangentially. We point out that~\eqref{eq:n_c=2} yields $(F_\bfc^*)^{-1}(\partial\cM^*\setminus\partial\cM)\subset \cM$, which is to say that $\cS_\bfc$ as defined above is a subset of $\cM$ as it should. 
 
The reader will notice that
\beqn
\cS_\bfc = (F_\bfc^*|_{\cM})^{-1}(\partial\cM^*) \cup (F_\bfc^*|_\cM)^{-1}(F_\bfc^*|_{\cM^*\setminus\cM})^{-1}(\partial\cM),
\eeqn
which is a convenient way of viewing the singularity set. The interpretation is that the second member of the union corresponds to ``secondary'' singularities --- singularities as first seen from the white disk and then pulled back to $\cM$.

The singularity set $\cS_\bfc\subset\cM$ consists of piecewise smooth curves with uniformly bounded negative slopes. (In fact, they are stable curves, i.e., curves~$S$ whose tangent spaces $T_x S$, $x\in S$, are contained in the respective stable cones $\cC^s_x$ as defined below.) The smooth curves, or branches, either terminate on other branches in the interior of $\cM$, or they extend all the way to the boundary of $\cM$, and every branch is a part of a path of branches reaching from one part of the boundary to another, as is well known. When $\bfc$ varies, the singularity set $\cS_\bfc$ varies in a continuous manner: Its topology can change in that new branches can appear and existing ones can disappear, and the branches generally undergo deformations, but $\cS_\bfc$ is contained in an arbitrarily small tubular neighborhood of $\cS_{\tilde\bfc}$ provided $\bfc$ is in a sufficiently small neighborhood of~$\tilde\bfc$. 
Moreover, the number of branches is uniformly bounded from above. See~\cite{StenlundYoungZhang_2012} for more on time-dependent singularity sets. In particular, it is possible to prove~\cite{StenlundYoungZhang_2012} that
\begin{lem}\label{lem:map_cont}
Given an admissible $\tilde\bfc$ and a compact subset $E\subset \cM\setminus \cS_{\tilde\bfc}$, there exists $\delta>0$ such that the maps $(\bfc,x)\mapsto F_\bfc(x)$ and $(\bfc,x)\mapsto D_xF_\bfc$ are uniformly continuous on $E \times  B_\delta(\tilde\bfc)$. 
\end{lem}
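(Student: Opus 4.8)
The plan is to deduce the lemma from the smooth dependence of the billiard flow on the location of the white disk, away from singularities, combined with a compactness argument over $E$.

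First, I would fix a point $x_0\in E$ and describe the first-return map near $(x_0,\tilde\bfc)$ explicitly. Since $x_0\notin\cS_{\tilde\bfc}$, the orbit of $x_0$ under the flow $\Phi_{\tilde\bfc}$, followed until its first return to $\cM$, is a concatenation of finitely many elementary segments: free flights, reflections off the gray or the white disk, and crossings of transparent walls. By the description of $\cS_{\tilde\bfc}$ recalled in Section~\ref{sec:sing} (and established in \cite{StenlundYoungZhang_2012}), this orbit avoids all corner points, meets every disk and every transparent wall transversally, has each transparent-wall crossing lying strictly in the interior of the clean-pass region or strictly in its complement, meets the white disk at most once (free zone condition), and returns to the interior of $\cM$. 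The number of elementary segments is uniformly bounded, because the return time $T_{\tilde\bfc}$ is bounded above and the free path is bounded below by positive constants. Each elementary segment is governed by a map --- free flight up to a hitting time defined implicitly by an equation whose derivative is nonzero by transversality, a reflection law, or a transparent-wall crossing --- that is as smooth as the boundaries allow (so at least $C^2$ under the $C^3$ hypothesis of Remark~\ref{rem:generalizations}, and real-analytic for disks), and that depends smoothly on the phase point \emph{and} on $\bfc$, since the center (and, in the generalization, the shape) of the white disk depends affinely, hence analytically, on $\bfc$.

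Second, I would observe that all the properties used above --- avoidance of corners, transversality at each collision or crossing, the clean-pass status of each transparent crossing, the return to $\cM$ occurring at the correct step, and the single white-disk collision --- are \emph{open} conditions on the pair $(x,\bfc)$. Hence there are a neighborhood $U_{x_0}\subset\cM$ of $x_0$ and a radius $\delta_{x_0}>0$ such that for every $(x,\bfc)\in U_{x_0}\times B_{\delta_{x_0}}(\tilde\bfc)$ the orbit of $x$ under $\Phi_\bfc$ up to its first return to $\cM$ has the same combinatorial type as that of $x_0$ under $\Phi_{\tilde\bfc}$; consequently $F_\bfc(x)$ is given on $U_{x_0}\times B_{\delta_{x_0}}(\tilde\bfc)$ by one fixed finite composition of the above elementary maps, so $(\bfc,x)\mapsto F_\bfc(x)$ and, by the chain rule, $(\bfc,x)\mapsto D_xF_\bfc$ are smooth, in particular continuous, there.

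Finally, I would patch: cover the compact set $E$ by finitely many neighborhoods $U_{x_1},\dots,U_{x_N}$ of this kind and set $\delta=\tfrac12\min_{1\le j\le N}\delta_{x_j}$. Then $F_\bfc(x)$ and $D_xF_\bfc$ are defined and continuous on the open set $\left(\bigcup_{j=1}^N U_{x_j}\right)\times B_{2\delta}(\tilde\bfc)$, which contains the compact set $E\times\overline{B_\delta(\tilde\bfc)}$; a continuous map on a compact set is uniformly continuous, which yields the lemma. The main obstacle is the second step: translating the single hypothesis $x_0\notin\cS_{\tilde\bfc}$ into the statement that regular (transversal, corner-free, clean-pass-stable, single-white-hit) behavior of the \emph{entire} orbit segment persists under small joint perturbations of $x$ and $\bfc$. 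This is precisely where the fine anatomy of the singularity set and the geometric conditions (no-overlap, finite horizon, free zone, clean pass) enter, and it is the content imported from the time-dependent billiard theory of \cite{StenlundYoungZhang_2012}.
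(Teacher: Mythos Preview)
The paper does not actually give a proof of this lemma; it simply asserts that the result ``is possible to prove~\cite{StenlundYoungZhang_2012}'' and moves on. Your proposal therefore cannot be compared to the paper's own argument, because there is none.

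That said, your sketch is the standard and correct way to establish such a statement, and is presumably what the cited reference does in greater generality. The three steps --- (i) express $F_\bfc$ locally as a finite composition of smooth elementary maps (free flights plus reflections) whose structure is determined by the itinerary of $x_0$ under $\Phi_{\tilde\bfc}$; (ii) observe that the defining conditions of $\cS_{\tilde\bfc}$ are exactly the closed conditions whose complements guarantee that this itinerary is stable under joint perturbation of $(x,\bfc)$; (iii) pass from local continuity to uniform continuity via compactness of $E$ --- are all sound. You have also correctly identified the substantive point: step~(ii) requires unpacking the definition $\cS_{\tilde\bfc} = F_{\tilde\bfc}^{-1}(\partial\cM) \cup (F_{\tilde\bfc}^*)^{-1}(\partial\cM^*\setminus\partial\cM)$ to see that $x_0\notin\cS_{\tilde\bfc}$ rules out tangencies, corner hits, and borderline clean-pass crossings along the entire orbit segment, and that these exclusions are open in $(x,\bfc)$ because the white disk enters the relevant implicit equations smoothly through $\bfc$. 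One minor simplification specific to this model that you could exploit: the free zone condition forces $n_\bfc\in\{1,2\}$, so the ``finite composition'' in step~(i) has at most two factors of $F_\bfc^*$, which makes the itinerary analysis very short.
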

The map $(\bfc,x)\mapsto F_\bfc(x)$ is thus continuous on the set $\{(\bfc,x)\in B_\ve(\mathbf{0})\times\cM\,:\,x\in\cM\setminus\cS_{\bfc}\}$.

\medskip
We can extend the concept of singularities for sequences of billiard maps in a natural way. Suppose, then, that $\bfc_0$ and $\bfc_1$ are given centerings. The set
\beqn
\cS_{\bfc_0,\bfc_1} = \cS_{\bfc_0} \cup F_{\bfc_0}^{-1}(\cS_{\bfc_1})
\eeqn
is the singularity set of the composition $F_{\bfc_1}\circ F_{\bfc_0}$.
More generally, given a (finite or infinite) sequence $(\bfc_{i})_{i=0}^{N-1}$, we can define inductively, for $1\leq n\leq N$, the singularity set of the composition $F_{\bfc_{n-1}}\circ\dots\circ F_{\bfc_0}$ as
\beqn
\cS_{\bfc_0,\dots,\bfc_{n-1}} = \cS_{\bfc_0} \cup F_{\bfc_0}^{-1}(\cS_{\bfc_1,\dots,\bfc_{n-1}}).
\eeqn
The importance of these sets lies in the fact that the actual billiard trajectory from $x$ to $F_{\bfc_{n-1}}\circ\dots \circ F_{\bfc_0}(x)$ meets a singularity at some point if and only if $x\in \cS_{\bfc_0,\dots,\bfc_{n-1}}$.

Going backward in time, similar singularity sets can be defined for the compositions $(F_{\bfc_{n-1}}\circ\dots\circ F_{\bfc_0})^{-1}$. We denote them $\cS_{\bfc_0,\dots,\bfc_{n-1}}^-$.


\subsection{Uniform hyperbolicity}\label{sec:hyperbolicity}

Let $x = (r,\varphi)\in \cM$ be arbitrary. We denote by $\kappa_\text{gray}$ and $\kappa_\text{white}$ the curvature of the boundary of the gray and the white disk, respectively.  If $x$ is on a solid wall (of the gray disk), define
\beq\label{eq:solid_bounds}
a(x) = \kappa_\text{gray} \quad\text{and}\quad b(x) = \kappa_\text{gray} + \frac{\cos\varphi}{\tau_{\min}}.
\eeq
On the other hand, if $x$ is on a transparent wall, define
\beq\label{eq:cone_slopes}
a(x) = \frac{d}{\tau_{\max} + \frac{1}{\kappa_{\min}}}  \quad\text{and}\quad b(x) =  \frac{\cos\varphi}{\tau_{\min}},
\eeq
where $\kappa_{\min} = \min(\kappa_\text{gray},\kappa_\text{white})$. Note that $b(x)$ is strictly positive by~\eqref{eq:transparent_cosine}. In fact, there exists constants $a_{\min}$ and $b_{\max}$ such that
\beqn
0<a_{\min}\leq a(x) < b(x) \leq b_{\max} <\infty
\eeqn
for all $x\in\cM$.

\begin{lem}\label{lem:hyperbolic}
For any admissible $\bfc$, the unstable cones
\beqn
\cC^u_x = \{(\rd r,\rd\varphi)\in T_x\cM \,:\, a(x) \leq \rd \varphi/\rd r\leq b(x)\}, \quad x\in\cM,
\eeqn
are invariant for $F_\bfc$ and the stable cones
\beqn
\cC^s_x = \{(\rd r,\rd\varphi)\in T_x\cM \,:\, -a(x) \leq\rd \varphi/\rd r\leq -b(x)\}, \quad x\in\cM,
\eeqn
are invariant for $F_\bfc^{-1}$. There exists constants $C>0$ and $\Lambda>1$ such that, given $n\geq 1$ admissible centerings $\bfc_0,\dots,\bfc_{n-1}$, the bound
\beqn
\|D_x (F_{\bfc_{n-1}}\circ\dots \circ F_{\bfc_0})\, v\| \geq C\Lambda^n \|v\|
\eeqn
holds for all $v\in\cC^u_x$ and for all $x\in\cM\setminus\cS_{\bfc_0,\dots,\bfc_{n-1}}$, and the bound
\beqn
\|D_x (F_{\bfc_{n-1}}\circ\dots \circ F_{\bfc_0})^{-1}\, v\| \geq C\Lambda^n \|v\|
\eeqn
holds for all $v\in\cC^s_x$ and for all $x\in\cM\setminus\cS_{\bfc_0,\dots,\bfc_{n-1}}^{-1}$. 
\end{lem}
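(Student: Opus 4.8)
The plan is to run the classical continued-fraction (wave-front curvature) computation for dispersing billiards, as in \cite{ChernovMarkarian_2006}, while tracking the two extra features of our setup: the flat transparent walls, and the possibility of a single interleaved collision with the white disk. First I would fix $x\in\cM$ and follow along the billiard segment from $x$ to $F_\bfc(x)$ the quantity $\cB=\rd\varphi/\rd r$ restricted to a one-dimensional tangent subspace (equivalently, the signed curvature of the evolving wave front), recalling the two elementary evolution rules: along a free flight of length $\tau$ one has $\cB^{-1}\mapsto\cB^{-1}+\tau$, and at a collision with a boundary piece of curvature $\kappa$ at angle $\varphi$ one has $\cB\mapsto\cB+2\kappa/\cos\varphi$, so that a transparent (flat) wall leaves $\cB$ untouched apart from the sign bookkeeping already built into the $(r,\varphi)$-charts. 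By the free-zone condition a step of $F_\bfc$ decomposes, via $F_\bfc=(F_\bfc^*)^{n_\bfc}|_\cM$ with $n_\bfc\le 2$, into: a free flight; at most one dispersing collision with the white disk; possibly a clean pass through a transparent wall; and a final free flight terminating either on the gray disk (a dispersing collision) or on a transparent wall.

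\textbf{Cone invariance.} Next I would verify forward-invariance of $\cC^u$ under $F_\bfc$ by going through the finitely many combinatorial types of step (source wall solid or transparent; target wall solid or transparent; white-disk collision present or not). The constants $a(x),b(x)$ of \eqref{eq:solid_bounds}--\eqref{eq:cone_slopes} are rigged precisely so that this works: the upper bound $\cB\le b(F_\bfc x)$ at the image comes from the last free flight ($\cB\le 1/\tau\le 1/\tau_{\min}$) together with the $\cos\varphi$ change-of-chart factor; the lower bound $\cB\ge a(F_\bfc x)$ at a solid target comes from the jump $+2\kappa_{\text{gray}}/\cos\varphi$ at the final collision, while at a transparent target it comes from the observation that the most recent dispersing collision (which always occurs in the step, see below) leaves $\cB\ge\kappa_{\min}$ and that at most one subsequent free flight of length $\le\tau_{\max}$ only lowers $\cB$ to $\ge(\tau_{\max}+\kappa_{\min}^{-1})^{-1}$, which combined with $\cos\varphi\ge d$ from \eqref{eq:transparent_cosine} is exactly $a(F_\bfc x)$. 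Positivity of $\cB$ is never destroyed (divergent fronts stay divergent). The stable-cone statement is then free: by \eqref{eq:inverse}, $F_\bfc^{-1}=\cI\circ F_\bfc\circ\cI$, and since $\cI$ reverses the sign of $\rd\varphi$ and fixes $a,b$ (which depend only on position and $\cos\varphi$), it carries $\cC^u$ onto $\cC^s$; hence $\cC^s$-invariance under $F_\bfc^{-1}$ is $\cC^u$-invariance under $F_\bfc$ conjugated by $\cI$.

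\textbf{Uniform expansion and iteration.} With cones in hand, I would pass to the adapted ($p$-)metric, in which the expansion factor of a vector of $\cC^u_x$ across a single dispersing collision followed by a free flight of length $\tau$ is exactly $1+\tau\cB$ with $\cB$ the pre-flight curvature. Since every step of $F_\bfc$ contains at least one dispersing collision, and there $\tau\ge\tau_{\min}$ and $\cB\ge a_{\min}$, the one-step expansion is $\ge\Lambda\defas 1+\tau_{\min}a_{\min}>1$ in the $p$-metric; the $p$-metric and the Euclidean $(r,\varphi)$-metric are equivalent with constants depending only on $a_{\min},b_{\max}$ and the fixed geometry, which I absorb into $C$. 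For the composition $F_{\bfc_{n-1}}\circ\dots\circ F_{\bfc_0}$, on $\cM\setminus\cS_{\bfc_0,\dots,\bfc_{n-1}}$ the trajectory meets no singularity, so the chain rule applies cleanly and the one-step estimates multiply, giving $\|D_x(F_{\bfc_{n-1}}\circ\dots\circ F_{\bfc_0})v\|\ge C\Lambda^n\|v\|$ for $v\in\cC^u_x$. The backward bound on $\cM\setminus\cS^-_{\bfc_0,\dots,\bfc_{n-1}}$ follows by applying this to the $\cI$-conjugate forward composition with the centerings in reverse order, using $(F_{\bfc_{n-1}}\circ\dots\circ F_{\bfc_0})^{-1}=\cI\circ(F_{\bfc_0}\circ\dots\circ F_{\bfc_{n-1}})\circ\cI$ from \eqref{eq:inverse} and the correspondingly reversed relation between $\cS^-$ and $\cS$. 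The uniformity in $\bfc$ over admissible centerings rests on Lemma~\ref{lem:map_cont}.

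\textbf{Main obstacle.} The one genuinely non-routine ingredient, and the place where the specific design of $\cM$ pays off, is the transparent walls: being flat they contribute no expansion, so the argument breaks unless (i) no step of $F_\bfc$ is a pure flat-to-flat free passage and (ii) the crossing angle at a transparent-wall return is uniformly bounded away from grazing. Point (ii) is exactly \eqref{eq:transparent_cosine}, which is what lets $a(x)$ be taken uniformly positive on transparent walls. Point (i) follows from the no-overlap and finite-horizon conditions: adjacent transparent walls meet only at a corner buried inside the gray disk, while a traversal between opposite transparent walls is a full horizontal or vertical chord of the square, which is obstructed once $\bar R+R-\ve\ge\tfrac12$; hence between any two successive returns to $\cM$ the trajectory necessarily hits the gray or the white disk. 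Checking these geometric facts carefully --- equivalently, checking that the system remains \emph{uniformly} hyperbolic, unlike the generic situation with flat walls (cf.\ Remark~\ref{rem:non-uniform}) --- is where the care goes; once it is secured, the rest is the textbook continued-fraction estimate, and the conclusion, including its uniformity, is exactly the input needed to invoke the time-dependent billiard machinery of \cite{StenlundYoungZhang_2012}.
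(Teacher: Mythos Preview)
Your proposal follows essentially the same route as the paper: slope/curvature transformation formulas for $F_\bfc^*$, cone invariance verified by case analysis (with the transparent-target case handled via \eqref{eq:transparent_cosine} and the fact that the $F_\bfc^*$-preimage sits on a solid wall), stable cones via the involution~$\cI$, and expansion in the $p$-metric with $\Lambda=1+\tau_{\min}a_{\min}$.

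One step as written is not correct, though: you assert that ``the $p$-metric and the Euclidean $(r,\varphi)$-metric are equivalent with constants depending only on $a_{\min},b_{\max}$ and the fixed geometry.'' They are not. On solid walls $\cos\varphi$ can be arbitrarily small, so $\|\rd x\|/\|\rd x\|_\rp=\sqrt{1+\cV^2}/\cos\varphi$ is unbounded on unstable vectors. The paper does \emph{not} absorb this into a constant; instead it combines the first leg's $p$-metric expansion with the offending $\cos\varphi_0$ factor to get the uniform lower bound
\[
\frac{\|\rd x_1\|_\rp}{\|\rd x_0\|_\rp}\cdot\frac{\cos\varphi_0}{\sqrt{1+\cV_0^2}}
=\frac{\cos\varphi_0+\tau_0(\cV_0+\kappa_0)}{\sqrt{1+\cV_0^2}}
\geq \frac{\tau_{\min}a_{\min}}{\sqrt{1+b_{\max}^2}},
\]
while at the endpoint $1/\cos\varphi_m\geq 1$ works in your favor. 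This is the standard billiard trick, but your sentence as written would fail, so you should replace the equivalence claim with this explicit computation. (A minor aside: the uniformity in~$\bfc$ comes from the uniformity of $\tau_{\min},\tau_{\max},\kappa_{\min},d$, not from Lemma~\ref{lem:map_cont}, which concerns continuity.)
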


The lemma states that the maps are \emph{uniformly} hyperbolic with common families of unstable and stable cones and with a uniform minimal expansion rate in the cones. We are therefore in a setting to which the time-dependent analysis that was carried out in~\cite{StenlundYoungZhang_2012} applies directly.

\begin{proof}[Proof of Lemma~\ref{lem:hyperbolic}]
Consider a point $x_1 = (r_1,\varphi_1) \in\cM^*$ and set $x_0 = (r_0,\varphi_0) = (F_\bfc^*)^{-1}(x_1)\in\cM^*$ which corresponds to a straight flight from $x_0$ to $x_1$ (possibly passing through a transparent wall in a way which does not constitute a return to $\cM$). We have \cite{ChernovMarkarian_2006}
\beq\label{eq:slope_formula}
\cV_1 = \kappa_1 + \frac{\cos\varphi_1}{\tau_0 + \frac{\cos\varphi_0}{\cV_0 + \kappa_0}}.
\eeq
Here $\tau_0$ is the path length from $x_0$ to $x_1$, $\kappa_i = \kappa(x_i)$ is the curvature of the wall at $x_i$, $\cV_0$ is the slope of a tangent vector $(\rd r,\rd\varphi)$ at $x_0$ and $\cV_1$ is the slope of the corresponding tangent vector at $x_1$ obtained via the tangent map. Observe that, by our conventions of defining the cross section, we have the uniform bounds
\beq\label{eq:path_bounds}
0<\tau_{\min}\leq \tau_0\leq \tau_{\max}<\infty. 
\eeq

First, we see that the ``positive'' unstable cones $\{(\rd r,\rd\varphi)\in T_x\cM^* \,:\, 0 \leq \rd \varphi/\rd r\leq \infty\}$, $x\in\cM^*$, are invariant for $F_\bfc^*$. In fact, the image of a ``positive'' tangent vector at $x_0$ satisfies
\beqn
\kappa_1\leq \cV_1 \leq \kappa_1 + \frac{\cos\varphi_1}{\tau_{\min}}.
\eeqn

We wish to improve the lower bound on $\cV_1$ in the case $\kappa_1=0$, i.e., when $x_1$ is on a transparent wall and in $\cM$. In this case $x_0$, defined above, is necessarily on a solid wall. Thus, $\kappa_0\geq \kappa_{\min}>0$ and
\beqn
\frac{\cos\varphi_0}{\cV_0 + \kappa_0} \leq \frac{1}{\kappa_{\min}}.
\eeqn
Recalling~\eqref{eq:transparent_cosine}, we have $\cos\varphi_1\geq d$, so that, by~\eqref{eq:slope_formula} and~\eqref{eq:path_bounds},
\beqn
\cV_1 = \frac{\cos\varphi_1}{\tau_0 + \frac{\cos\varphi_0}{\cV_0 + \kappa_0}} \geq \frac{d}{\tau_{\max} + \frac{1}{\kappa_{\min}}} .
\eeqn
Let us extend the definition of $\cC^u_x$ from $x\in\cM$ to all $x\in\cM^*$ by defining~$a(x)$ and~$b(x)$ as in~\eqref{eq:solid_bounds} for $x\in\cM^*\setminus\cM$. By the above analysis, we conclude that $F_\bfc^*$ preserves the obtained cones $\cC^u_x$, $x\in \cM^*$.

Finally, suppose $x$ is a general element of $\cM$ and set $x_1 = F_\bfc(x)\in\cM$. Then the billiard trajectory from $x$ to $x_1$ can be broken into straight legs, the last one of which is from $x_0 = (F_\bfc^*)^{-1}(x_1)\in\cM^*$ to $x_1$. Since each of the legs preserves the cones~$\cC^u_x$, $x\in\cM^*$, the map~$F_\bfc$ preserves the cones~$\cC^u_x$, $x\in\cM$.

Coming to the stable cones, we resort to a time-reversal argument. Recall the expression~\eqref{eq:inverse} of the inverse billiard map $F_\bfc^{-1}$ in terms of the involution~$\cI$. As can be checked, the derivative~$D_x \cI$ equals either $\bigl(\begin{smallmatrix}1 & 0 \\ 0 & -1 \end{smallmatrix}\bigr)$ or $\bigl(\begin{smallmatrix}-1 & 0 \\ 0 & 1 \end{smallmatrix}\bigr)$ depending on whether $x$ is on a solid or a transparent wall, respectively. Using the obvious identity $D_x\cI(\cC^s_x) = \cC^u_{\cI (x)}$ and the above fact that $F_\bfc$ preserves the unstable cones, one confirms that~$F_\bfc^{-1}$ indeed preserves the stable cones.

Next, we prove the uniform expansion property of the compositions $F_{\bfc_{n-1}}\circ\dots\circ F_{\bfc_0}$ on unstable vectors. The complementary statement concerning stable vectors and compositions of inverse maps is similar. To that end, fix a $y_0\in\cM$ and denote $y_{k} = F_{\bfc_{k-1}}\circ\dots\circ F_{\bfc_0}(y_0)\in\cM$ for $1\leq k\leq n$. We assume $y_0\notin \cS_{\bfc_0,\dots,\bfc_{n-1}}$. Also fix an unstable vector $\rd y_0\in\cC^u_{y_0}$ and denote $\rd y_{k} = D_{y_0}(F_{\bfc_{k-1}}\circ\dots\circ F_{\bfc_0})\,\rd y_0$ for $1\leq k\leq n$. The billiard trajectory from $y_0$ to $y_{n}$ can be broken into at most $2n$ straight legs: Given $1\leq k\leq n$, $y_{k} = F_{\bfc_{k-1}}(y_{k-1})$ equals either $F_{\bfc_{k-1}}^*(F_{\bfc_{k-1}}^*(y_{k-1}))$ or $F_{\bfc_{k-1}}^*(y_{k-1})$ depending, respectively, on whether the trajectory from~$y_{k-1}$ to~$y_k$ involves a collision with the white disk or not. Thus, we obtain collision points $x_j\in\cM^*$, $0\leq j\leq m$, where $n\leq m\leq 2n$. Of course, $(x_j)_{j=0}^m$ has $(y_k)_{k=0}^n$ as an ordered subsequence, with $x_0 = y_0$ and $x_m = y_n$. Likewise, we get a sequence of tangent vectors $\rd x_j$, $0\leq j\leq m$, so that $\rd x_0 = \rd y_0$ and $\rd x_m = \rd y_n$.

The billiard maps $F_{\bfc_k}^*$ of the extended cross section $\cM^*$ satisfy, by well-known \cite{ChernovMarkarian_2006} formulas,
\beqn
\frac{\|\rd x_{j+1}\|_\rp}{\|\rd x_j\|_\rp} 
=  1+\tau_j\frac{\cV_j + \kappa_j}{\cos\varphi_j}. 
\eeqn
Here $\|\slot\|_\rp$ is the so-called p-metric of the tangent space, $\cV_j$ refers to the slope of $\rd x_j$, $\kappa_j$ to the curvature of the boundary at $x_j=(r_j,\varphi_j)$, and $\tau_j$ to the length of the straight leg from~$x_j$ to~$x_{j+1}$. Since $\rd x_j\in \cC^u_{x_j}$, we have
\beqn
\frac{\|\rd x_{j+1}\|_\rp}{\|\rd x_j\|_\rp}
 \geq \Lambda
 \quad\text{with}\quad
\Lambda =  1+\tau_{\min}a_{\min}  > 1.
\eeqn
Since the Euclidean metric is related to the p-metric by the formula
\beqn
\|\rd x\| = \frac{\|\rd x\|_\rp}{\cos\varphi}\sqrt{1+\cV^2},
\eeqn
we have
\beqn
\frac{\|\rd y_n\|}{\|\rd y_0\|} = \frac{\|\rd x_m\|}{\|\rd x_0\|} = \frac{\|\rd x_m\|_\rp}{\|\rd x_1\|_\rp} \frac{\|\rd x_1\|_\rp}{\|\rd x_0\|_\rp} \frac{\cos\varphi_0}{\cos\varphi_m}\frac{\sqrt{1+\cV_m^2}}{\sqrt{1+\cV_0^2}},
\eeqn
where
\beq\label{eq:bounded_contraction}
\frac{\|\rd x_1\|_\rp}{\|\rd x_0\|_\rp} \frac{\cos\varphi_0}{\sqrt{1+\cV_0^2}} = \frac{\cos\varphi_0 + \tau_0(\cV_0+\kappa_0)}{\sqrt{1+\cV_0^2}} \geq \frac{\tau_{\min}a_{\min}}{\sqrt{1+b_{\max}^2}}.
\eeq
Therefore
\beqn
\frac{\|\rd y_n\|}{\|\rd y_0\|} \geq  C\Lambda^m \geq C\Lambda^n
\eeqn
with
\beqn
C = \Lambda^{-1}  \frac{\tau_{\min}a_{\min}}{\sqrt{1+b_{\max}^2}} \sqrt{1+a_{\min}^2},
\eeqn
which proves the lemma.
\end{proof}

\begin{remark}\label{rem:non-uniform}
Notice in the proof that if the image point $x_1$ was on a transparent wall, $\kappa_1= 0$, and if in that case it was possible for $\cos\varphi_1$ to be arbitrarily small in contrast to \eqref{eq:transparent_cosine}, then \eqref{eq:slope_formula} would result in arbitrarily small values of the slope $\cV_1$ of the image tangent vector. Subsequently, we would not obtain a positive lower bound $a_{\min}$ in \eqref{eq:cone_slopes}. Even worse, if it was possible for $\cos\varphi_0$ to be arbitrarily small when $x_0$ is on a transparent wall, we would not obtain a positive lower bound in~\eqref{eq:bounded_contraction}, nor the uniform expansion rate $\Lambda>1$ in Lemma~\ref{lem:hyperbolic}. These facts constitute the second fundamental reason, alluded to in Section~\ref{sec:Model}, why we defined the cross section $\cM$ in the specific way, ruling out small values of the cosine on transparent walls.
\end{remark}

\subsection{Homogeneous local stable manifolds}\label{sec:HLSM}
In order to control distortion effects of dispersing billiard maps, it is traditional to introduce so-called homogeneity strips. In our case, 
\begin{eqnarray*}
\bH_k & = & \{ (r,\varphi)\in\cM^*\,:\, \pi/2-k^{-2}<\varphi \leq \pi/2-(k+1)^{-2}\}\\
\bH_{-k} &= &\{ (r,\varphi)\in\cM^*\,:\, -\pi/2+(k+1)^{-2}\leq \varphi < -\pi/2+k^{-2}\}
\end{eqnarray*}
for all integers $k\geq k_0$, where $k_0$ is a sufficiently large uniform constant.
We also set
\beqn
\bH_0 = \{ (r,\varphi)\in\cM^*\,:\, -\pi/2+k_0^2\leq \varphi \leq \pi/2-k_0^{-2}\}\ .
\eeqn
Without going into any detail, if $F_\bfc^*(x)$ and $F_\bfc^*(y)$ belong to the same homogeneity strip, then roughly speaking the derivatives $D_xF_\bfc^*$ and $D_yF_\bfc^*$ are comparable.
By~\eqref{eq:transparent_cosine}, we may assume that~$k_0$ is so large that each component~$\cM_i$ with~$i$ even (transparent wall) involves just one such strip, namely~$\bH_0$. 

Suppose that $(\bfc_n)_{n\geq 0}$ is a sequence of admissible centerings. We say that two points~$x$ and~$y$ in~$\cM^*$ are separated if they lie either in different connected components of~$\cM^*$ or in different homogeneity strips of the same component. Recall the notation in~\eqref{eq:composition}. Given a pair $(x,y)\in\cM\times\cM$, the future \emph{future separation time} $s_+(x,y)=s_+((\bfc_n)_{n\geq 0};x,y)$ of $x$ and $y$ is defined as follows. If~$x$ and~$y$ are separated, $s_+(x,y)=0$. Otherwise $s_+(x,y)$ is the smallest integer $n\geq 1$ such that either $F_{\bfc_{n-1}}^*(\cF_{n-1}(x))$ and $F_{\bfc_{n-1}}^*(\cF_{n-1}(y))$, or $\cF_n(x)$ and $\cF_n(y)$, are separated. (The first alternative takes into account the possibility that the trajectories land on different homogeneity strips on the \emph{white} disk, or that only one of them hits the white disk, before making a return to $\cM$.)

Let $(\bfc_n)_{n\geq 0}$ be an infinite sequence of admissible centerings. A connected smooth curve $W$ is called a {\it local stable manifold}, if the following hold for every $n \geq 0$: (i) $\cF_n (W)$ is connected and homogeneous, (ii) $T_x(\cF_n (W)) \subset \cC^s_x$ for every $x \in \cF_n (W)$, and (iii) similar statements hold for $F_{\bfc_n}((\cF_n(W)))$. (Item (iii) takes into account the possibility that the curve hits the white scatterer before returning to $\cM$.) The local stable manifolds are absolutely continuous with uniform bounds on the Jacobian, the key factor behind it being the uniform hyperbolicity discussed before. By the same token, they contract at a uniform exponential rate (Lemma~\ref{lem:hyperbolic}). Notice that, by definition, two points on the same homogeneous local stable manifold never separate, so that their future separation time is infinite. It turns out that this is a defining property of homogeneous local stable manifolds. See~\cite{StenlundYoungZhang_2012} for more on local stable manifolds in the time-dependent setting.

\subsection{Exponential loss of memory}\label{sec:memory_loss}
Given any admissible centering~$\bfc$, classical theory of billiards shows that the map $F_\bfc$, corresponding to the \emph{fixed} center~$\bfc$, is \emph{ergodic and mixing} with respect to the invariant measure $\mu$. 

For the compositions \eqref{eq:composition} of such maps with \emph{time-dependent} centerings~$\bfc_n$, the analysis carried out in~\cite{StenlundYoungZhang_2012} is directly applicable. The main result of that paper can be stated in the present context as follows:
\begin{thm}\label{thm:weak_conv}
There exists $\ve>0$ such that the following holds. Let $\mu^1$ be a probability measure on $\cM$, with
a strictly positive, $\tfrac16$-H\"older continuous density~$\rho^1$ 
with respect to the measure $\mu$.
Given $\gamma>0$, there exist $0<\theta_\gamma<1$ and $C_\gamma>0$ such that
\beqn
\left|\int_{\cM} f\circ \cF_n\, \rd\mu^1 - \int_{\cM} f \, \rd\mu \right| \leq C_\gamma ({\| f \|}_\infty + {|f|}_\gamma)\theta_\gamma^n, \quad n\geq 0,
\eeqn
for all sequences $(\bfc_n)_{n=0}^\infty$ of admissible centerings and all
 $\gamma$-H\"older continuous \mbox{$f:\cM\to\bR$}. The constant $C_\gamma=C_\gamma(\rho^1)$ depends on the density $\rho^1$ through the H\"older constant of $\log\rho^1$, while $\theta_\gamma$ does not depend 
on $\mu^1$.
\end{thm}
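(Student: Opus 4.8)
The plan is to derive Theorem~\ref{thm:weak_conv} from the main (coupling) result of \cite{StenlundYoungZhang_2012} by checking that the family $\{F_\bfc\}$ of admissible billiard maps is an instance of the time-dependent (sequential) dispersing billiards treated axiomatically there, and then invoking the common invariance of $\mu$. The substance of the argument is that \emph{all} of the quantitative hypotheses of \cite{StenlundYoungZhang_2012} have, by design, been arranged to hold uniformly over the admissible family; once this is observed, the statement is essentially a transcription of that paper's theorem.

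First I would list, and then verify one by one, the standing assumptions of \cite{StenlundYoungZhang_2012}: (i) uniform hyperbolicity with a common pair of cone fields and a uniform expansion factor --- this is exactly Lemma~\ref{lem:hyperbolic}, whose proof uses the deliberate choice of $\cM$, in particular \eqref{eq:transparent_cosine}, to prevent degeneration on the transparent walls, cf.\ Remark~\ref{rem:non-uniform}; (ii) each singularity set $\cS_\bfc$ is a finite union of compact smooth stable curves, the number of branches bounded uniformly in $\bfc$, and these curves are uniformly transversal to the unstable cones $\cC^u_x$ --- this is the content of Section~\ref{sec:sing}, the free zone condition keeping the number of straight legs, and hence the combinatorial complexity of $\cS_{\bfc_0,\dots,\bfc_{n-1}}$, under control; (iii) the one-step expansion (``growth'') estimate, that the minimal expansion along homogeneous unstable curves dominates the complexity of the singularities --- since the number of singularity branches meeting any point is bounded by a fixed constant while the one-step expansion is at least $\Lambda>1$ by Lemma~\ref{lem:hyperbolic}, this follows in the usual way after passing to a suitable fixed iterate; (iv) bounded distortion for $DF_\bfc$ along homogeneous unstable curves and absolute continuity of the stable holonomies with uniformly bounded Jacobians --- these are proved verbatim as in \cite{ChernovMarkarian_2006,StenlundYoungZhang_2012}, using the homogeneity strips of Section~\ref{sec:HLSM} together with uniform hyperbolicity; (v) the mild joint continuity of $(\bfc,x)\mapsto F_\bfc(x)$ away from singularities --- this is Lemma~\ref{lem:map_cont}. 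Smallness of $\ve$ is what guarantees that the geometric conditions (no-overlap, finite horizon, free zone) hold for the whole admissible family, and hence that the constants above are genuinely uniform.

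With these hypotheses in force, I would quote the coupling theorem of \cite{StenlundYoungZhang_2012} for standard families of unstable curves: for any two probability measures $\mu^1,\mu^2$ on $\cM$ with strictly positive $\tfrac16$-H\"older densities with respect to $\mu$, and any $\gamma$-H\"older $f:\cM\to\bR$,
\[ \Bigl|\int_\cM f\circ\cF_n\,\rd\mu^1-\int_\cM f\circ\cF_n\,\rd\mu^2\Bigr|\le C_\gamma\,(\|f\|_\infty+|f|_\gamma)\,\theta_\gamma^n,\qquad n\ge 0, \]
with $0<\theta_\gamma<1$ independent of the sequence $(\bfc_n)_{n\ge 0}$ and of $\mu^1,\mu^2$, and $C_\gamma$ depending on the densities only through the H\"older norms of their logarithms. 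Taking $\mu^2=\mu$ --- which has density $\equiv 1$, trivially $\tfrac16$-H\"older --- and using that every $F_\bfc$ preserves $\mu$ (the paragraph around \eqref{eq:inverse}), so that $\int_\cM f\circ\cF_n\,\rd\mu=\int_\cM f\,\rd\mu$ for all $n$, yields the asserted estimate; the dependence of $C_\gamma$ on $\rho^1$ is then exactly through the H\"older constant of $\log\rho^1$.

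The real work is concentrated in steps (iii) and (iv): establishing the growth lemma and the distortion/holonomy estimates uniformly across the admissible family and in the presence of the transparent-wall cross section. These constitute the technical core of \cite{StenlundYoungZhang_2012}; the contribution here is to have set up $\cM$, the free zone condition, and Lemma~\ref{lem:hyperbolic} so that nothing in that machinery degenerates, after which Theorem~\ref{thm:weak_conv} follows by direct appeal to \cite{StenlundYoungZhang_2012}.
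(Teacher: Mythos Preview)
Your approach is correct and matches the paper's: the paper does not give an independent proof of this theorem but states it as the main result of \cite{StenlundYoungZhang_2012} transcribed into the present setting, after having established in Section~\ref{sec:preliminaries} exactly the uniform structures (singularities, hyperbolicity via Lemma~\ref{lem:hyperbolic}, homogeneity strips) needed to make that machinery applicable.

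One small correction: the smallness of $\ve$ in the theorem is \emph{not} what guarantees the no-overlap, finite horizon, and free zone conditions --- those are standing assumptions on the triple $(\bar R,R,\ve)$ fixed at the outset, and they already hold uniformly over the admissible family by construction. The genuine role of ``sufficiently small $\ve$'' here is the closeness hypothesis of \cite{StenlundYoungZhang_2012}: the coupling argument there requires that any two maps appearing in the composition be uniformly close to one another (so that the stable/unstable structures and the singularity sets of $F_\bfc$ and $F_{\tilde\bfc}$ are compatible), and this is controlled by $|\bfc-\tilde\bfc|\le 2\ve$. So in your verification list you should add a point (vi): uniform closeness of the maps $F_\bfc$, $\bfc\in B_\ve(\mathbf{0})$, in the sense of \cite{StenlundYoungZhang_2012}, which is where the smallness of $\ve$ actually enters.
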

Noting that $\int_{\cM} f\circ \cF_n\, \rd\mu^1 = \int_{\cM} f\, \rd(\cF_n)_*\mu^1$, the theorem states that the push-forward measure $(\cF_n)_*\mu^1$ converges at a \emph{uniform} exponential rate to $\mu$, in a properly understood weak sense. In particular, recalling that mixing can be formulated as correlation decay (see Theorem~\ref{thm:pair_bound}), when applied to constant sequences $\bfc_n = \bfc$, $n\geq 0$, the above theorem yields a \emph{uniform} exponential mixing rate for H\"older continuous observables, for all fixed admissible maps $F_\bfc$.

Theorem~\ref{thm:weak_conv} has a pre-eminent role in the proof of Theorem~\ref{thm:ASIP}. In truth, we will need a slightly refined version of it (Lemma~\ref{lem:memory_loss}) which allows for the measure $\mu^1$ to be supported on a single ``unstable'' curve. Nevertheless, it seems appropriate to introduce the above formulation as a key ingredient of the theory.


\section{The proof}\label{sec:Proof}
The proof of Theorem~\ref{thm:ASIP} --- as is usually the case with billiard proofs --- is rather technical. In order to keep the paper in manageable proportions, we will need to assume that the reader has a working knowledge of billiard techniques. Nevertheless, the flow of the argument should be accessible to a nonspecialist. For background on billiards, we refer to the excellent textbook~\cite{ChernovMarkarian_2006} and, on the time-dependent theory, to the paper~\cite{StenlundYoungZhang_2012}.

The proof will proceed as follows. In Section~\ref{sec:proof_defs} we introduce some notations and define the spaces of observables. In Section~\ref{sec:proof_pair_corr} we obtain uniform, exponentially decaying, bounds on pair correlation functions for observables in those spaces. In Section~\ref{sec:proof_multiple_corr} we extend the result to multiple correlation functions. Using the multiple correlation bounds, we can control the characteristic function of the process, and thus verify Gou\"ezel's condition for the vector-valued almost sure invariance principle. This is done in Section~\ref{sec:ASIP_proof}. In the non-stationary case, we also need a bound on the convergence rate of the covariance, which is obtained in Section~\ref{sec:proof_covariance}. Finally, Lemma~\ref{lem:coboundary} on the (non-)degeneracy of the covariance is proved in Section~\ref{sec:proof_coboundary}.


\subsection{Some definitions}\label{sec:proof_defs}
Here we will denote by
\beqn
\Omega = (B_\varepsilon(\mathbf{0}))^\bZ
\eeqn
the space of bi-infinite sequences of admissible centerings. Elements of $\Omega$ are denoted by $\omega=(\omega_i)_{i\in\bZ}$, where each $\omega_i$ is thus an admissible centering inside the disk $B_\varepsilon(\mathbf{0})$. The usual left shift is denoted by $\sigma$, i.e., 
\beqn
\sigma:\Omega\to\Omega:(\sigma\omega)_i = \omega_{i+1}\,\,\,\forall\, i\in\bZ.
\eeqn
For technical reasons, we will work with bi-infinite sequences, although the sequences $(F_{\omega_i})_{i\geq 0}$ are what we are eventually interested in.

Given a sequence $\omega\in\Omega$, we will use notation such as
\beqn
\cF_n(\omega,\slot) =F_{\omega_{n-1}}\circ\dots\circ F_{\omega_0} (\slot)
\quad \text{and} \quad 
\cF_{n+m,n+1}(\omega,\slot)
 = \cF_m(\sigma^n\omega,\slot),
\eeqn
as well as
\beqn
\cF_n^{-1}(\omega,\slot) =(F_{\omega_{n-1}}\circ\dots\circ F_{\omega_0})^{-1}(\slot)
\quad \text{and} \quad
\cF_{n+m,n+1}^{-1}(\omega,\slot)=\cF_m^{-1}(\sigma^n\omega,\slot)
\eeqn
for $n,m\geq 0$.
We use the convention that
$
\cF_{n,n+1} = \mathrm{id}_\cM
$
throughout.
The notation is consistent with the convention that, given $\omega$, the maps $F_{\omega_n}$ and $\cF_{n+m,n+1}(\omega,\slot)$ describe the transformations from time $n$ to time $n+1$ and to time $n+m$, respectively. 

Let 
$$
\cP_+(\omega)=(W_+(\omega)_\alpha)_{\alpha\in\cA_+(\omega)}
$$
be the measurable partition of $\cM$ consisting of the homogeneous local \emph{unstable} manifolds corresponding to the past $(\dots,\omega_{-2},\omega_{-1})$ and
$$
\cP_-(\omega)=(W_-(\omega)_\alpha)_{\alpha\in\cA_-(\omega)}
$$
the measurable partition of $\cM$ consisting of the homogeneous local \emph{stable} manifolds corresponding to the future $(\omega_{0},\omega_{1},\dots)$.
Also set
$$
\cP_{\pm}^{(n)}(\omega) = \cP_{\pm}(\sigma^n\omega), \quad n\in\bZ.
$$
We often write $\cP_\pm^{(n)}(\omega) = \bigl(W_{\pm,\alpha}^{(n)}(\omega)\bigr)_{\alpha\in\cA_\pm^{(n)}(\omega)}$, where
\beqn
\cA_\pm^{(n)}(\omega) = \cA_\pm(\sigma^n\omega)\quad\text{and}\quad W_{\pm,\alpha}^{(n)}(\omega) = W_{\pm,\alpha}(\sigma^n\omega),
\eeqn
or simply
\beqn
\cP_\pm^{(n)} = \bigl(W_{\pm,\alpha}^{(n)}\bigr)_{\alpha\in\cA_\pm^{(n)}}
\eeqn
when there is no danger of confusion about the sequence $\omega$. Note that, for any $n\in\bZ$, $\cP_+^{(n+1)}(\omega)$ is a refinement of the partition $F_{\omega_n}(\cP_+^{(n)}(\omega))$, obtained by cutting the elements of the latter into maximal homogeneous components. Similarly, $\cP_-^{(n)}(\omega)$ is a refinement of $F_{\omega_n}^{-1}(\cP_-^{(n+1)}(\omega))$.

Recall that, given a one sided sequence $(\omega_0,\omega_1,\dots)$ and two points $x,y\in\cM$, we have defined the future separation time of~$x$ and~$y$ in Section~\ref{sec:HLSM}. The definition obviously extends to two-sided sequences $\omega\in\Omega$ and depends only on $\omega_i$, $i\geq 0$. We write $s_+(\omega;x,y)$ for the future separation time. Similarly, the past separation time $s_-(\omega;x,y)$ is defined going backward in time, via the \emph{inverse} maps $F_{\omega_{-1}}^{-1},\, F_{\omega_{-2}}^{-1}\circ F_{\omega_{-1}}^{-1},\, \dots$, and it depends only on $\omega_i$, $i<0$.

Given a sequence $\omega$ and a number $\vartheta\in(0,1)$, we say that a function $f:\cM\to\bC$ is dynamically H\"older continuous on the homogeneous local unstable manifolds associated to $\omega$ with rate $\vartheta$, if there exists a real constant $K\geq 0$ such that
\beqn
|f(x)-f(y)|\leq K\vartheta^{s_+(\omega;x,y)}
\eeqn 
holds for all $x,y\in W_{+,\alpha}$ and all $\alpha\in\cA_+$. The smallest possible such constant $K$ is denoted $K_+(\omega;\vartheta,f)$. The class of such functions is a vector space, which we denote $\cH_+(\omega;\vartheta)$.
Similarly, a function $g:\cM\to\bC$ is called dynamically H\"older continuous on the homogeneous local stable manifolds associated to $\omega$ with rate $\vartheta$, if there exists a real constant $K\geq 0$ such that
\beqn
|g(x)-g(y)|\leq K\vartheta^{s_-(\omega;x,y)}
\eeqn 
holds for all $x,y\in W_{-,\alpha}$ and all $\alpha\in\cA_-$. The smallest possible such constant $K$ is denoted $K_-(\omega;\vartheta,f)$. The class of such functions is a vector space, which we denote $\cH_-(\omega;\vartheta)$.

The above function spaces generalize similar spaces of \cite{Stenlund_2010} to the \emph{time-dependent setting}, whereas the spaces of \cite{Stenlund_2010} generalize those of \cite{Chernov_2006}. In particular, ordinary H\"older continuous functions are dynamically H\"older continuous:

\begin{lem}\label{lem:Holder_dyn_Holder}
There exists a uniform constant $\bar C>0$ such that the following holds. Given $\bar\omega_0\in B_\ve(\mathbf{0})$, $\gamma>0$ and $A_f>0$, suppose $f:\cM\to\bC$ is a bounded function that satisfies 
\beqn
|f(x)-f(y)|\leq A_f \,d(x,y)^\gamma
\eeqn
for all $x$ and $y$ belonging to the same homogeneous component of $\cM\setminus \cS_{\bar\omega_0}$. (Here $d(\slot,\slot)$ is the Euclidean distance in $\cM$.) Then $f\in\cH_+(\omega;\Lambda^{-\gamma})\cap\cH_-(\omega;\Lambda^{-\gamma})$ with
\beqn
K_+(\omega;\Lambda^{-\gamma},f) \leq \max(A_f\bar C^\gamma,2\|f\|_\infty)\Lambda^\gamma
\quad\text{and}\quad
K_-(\omega;\Lambda^{-\gamma},f) \leq A_f\bar C^\gamma\Lambda^\gamma,
\eeqn
for all $\omega\in\Omega$ with $\omega_0=\bar \omega_0$.
\end{lem}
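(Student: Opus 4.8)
The plan is to relate the dynamical separation time $s_\pm(\omega;x,y)$ to the Euclidean distance $d(x,y)$ using the uniform hyperbolicity from Lemma~\ref{lem:hyperbolic}, and then to split into two cases according to whether $x,y$ lie in the same homogeneous component of $\cM\setminus\cS_{\bar\omega_0}$ or not. First I would recall the standard fact (see \cite{ChernovMarkarian_2006,StenlundYoungZhang_2012}) that if $x$ and $y$ lie on a common homogeneous local unstable manifold with future separation time $s_+(\omega;x,y) = n$, then the images $\cF_j(\omega,x)$ and $\cF_j(\omega,y)$ stay in a common homogeneous component for $0 \leq j < n$, and the branch of $\cF_n^{-1}$ (or rather the inverse derivative estimate from Lemma~\ref{lem:hyperbolic}) contracts distances along the unstable direction by a factor $\leq C^{-1}\Lambda^{-n}$. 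Hence $d(x,y) \leq C^{-1}\Lambda^{-n}\,\mathrm{diam}(\cM)$, which rearranges to $\Lambda^{-\gamma s_+(\omega;x,y)} \geq \bar c\, d(x,y)^{\gamma}$ for a uniform constant; equivalently $d(x,y)^\gamma \leq \bar C'^\gamma \Lambda^{-\gamma s_+}$ with $\bar C' $ absorbing $\mathrm{diam}(\cM)$, $C^{-1}$, and a factor $\Lambda$. The analogous statement for $s_-$ and stable manifolds follows by the time-reversal symmetry already exploited in the proof of Lemma~\ref{lem:hyperbolic}.

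Next I would carry out the case analysis for the unstable estimate. Fix $x,y \in W_{+,\alpha}$ for some $\alpha \in \cA_+$. If $x$ and $y$ lie in the same homogeneous component of $\cM \setminus \cS_{\bar\omega_0}$ (equivalently $s_+(\omega;x,y) \geq 1$, using $\omega_0 = \bar\omega_0$ and that the first-step singularities/homogeneity cuts are precisely governed by $\cS_{\bar\omega_0}$ together with the strips), then the hypothesis applies: $|f(x)-f(y)| \leq A_f\, d(x,y)^\gamma \leq A_f \bar C^\gamma \Lambda^{-\gamma s_+(\omega;x,y)}$, where $\bar C$ incorporates the $\Lambda$ factor from the case $s_+=1$ as well as the geometric constants above. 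If instead $x$ and $y$ lie in different homogeneous components at the first step, then $s_+(\omega;x,y) = 0$, so $\Lambda^{-\gamma s_+(\omega;x,y)} = 1$, and the trivial bound $|f(x)-f(y)| \leq 2\|f\|_\infty = 2\|f\|_\infty \Lambda^{-\gamma s_+(\omega;x,y)}$ suffices. Taking the maximum of the two constants yields $K_+(\omega;\Lambda^{-\gamma},f) \leq \max(A_f \bar C^\gamma, 2\|f\|_\infty)\Lambda^\gamma$, with the explicit $\Lambda^\gamma$ coming from the $s_+=1$ slack in the contraction estimate.

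For the stable estimate the key simplification is that two points on a common homogeneous local stable manifold $W_{-,\alpha}$ never separate going \emph{backward}, and in particular they are never separated at step $0$ relative to the future; more to the point, one checks that points on the same stable manifold automatically lie in the same homogeneous component of $\cM \setminus \cS_{\bar\omega_0}$ (a stable manifold cannot cross a stable singularity curve, since $\cS_{\bar\omega_0}$ consists of stable curves, as noted after Lemma~\ref{lem:map_cont}; and it lies within one homogeneity strip by definition of homogeneity of $\cF_0(W) = W$). Therefore the hypothesis $|f(x)-f(y)| \leq A_f\, d(x,y)^\gamma$ always applies, and combining with $d(x,y)^\gamma \leq \bar C^\gamma \Lambda^{-\gamma s_-(\omega;x,y)}$ gives $K_-(\omega;\Lambda^{-\gamma},f) \leq A_f\bar C^\gamma \Lambda^\gamma$ without the extra $\|f\|_\infty$ term, as claimed.

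The main obstacle I anticipate is not any single hard estimate but rather being careful and correct about how the first-step singularity set $\cS_{\bar\omega_0}$ and the homogeneity strips $\bH_k$ together delineate the "homogeneous components of $\cM\setminus\cS_{\bar\omega_0}$" appearing in the hypothesis, versus the homogeneous pieces that the definition of $s_+$ refers to (which also involves strips on the \emph{white} disk via $F_{\bar\omega_0}^*$). One has to verify that $x,y$ being in the same component of $\cM\setminus\cS_{\bar\omega_0}$ is exactly what is needed to start the inductive contraction argument, i.e.\ that no \emph{additional} cut relevant to $s_+ \geq 1$ is missed — this is where the definition of $\cS_{\bar\omega_0}$ as $F_{\bar\omega_0}^{-1}(\partial\cM) \cup (F_{\bar\omega_0}^*)^{-1}(\partial\cM^*\setminus\partial\cM)$ together with the remark (from Section~\ref{sec:HLSM}) that transparent-wall components carry only the single strip $\bH_0$ must be invoked. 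Modulo this bookkeeping, the argument is a routine application of Lemma~\ref{lem:hyperbolic} and the standard structure of homogeneous (un)stable manifolds recalled in Sections~\ref{sec:HLSM} and~\ref{sec:proof_defs}.
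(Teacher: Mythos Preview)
Your approach is essentially the same as the paper's: bound $d(x,y)$ by a constant times $\Lambda^{-s_\pm}$ via Lemma~\ref{lem:hyperbolic}, then split into cases according to whether the piecewise H\"older hypothesis applies, and for the stable side observe that a homogeneous local stable manifold lies in a single homogeneous component of $\cM\setminus\cS_{\bar\omega_0}$.

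There is, however, an off-by-one slip in your case analysis. For $x,y\in W_{+,\alpha}$ on the same homogeneous local unstable manifold, one always has $s_+(\omega;x,y)\geq 1$; the case $s_+=0$ never occurs. The correct dichotomy is $s_+>1$ versus $s_+=1$. The claimed equivalence ``same homogeneous component of $\cM\setminus\cS_{\bar\omega_0}$ $\Leftrightarrow$ $s_+\geq 1$'' is false: being in the same component and homogeneity strip of $\cM$ is what gives $s_+\geq 1$, whereas being additionally on the same side of $\cS_{\bar\omega_0}$ is what ensures non-separation after one step, i.e.\ $s_+>1$. Thus in the ``bad'' case it is $s_+=1$, and the trivial bound reads $|f(x)-f(y)|\leq 2\|f\|_\infty = 2\|f\|_\infty\,\Lambda^\gamma(\Lambda^{-\gamma})^{s_+}$, which is exactly where the factor $\Lambda^\gamma$ on the $2\|f\|_\infty$ term comes from. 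Your final bound is correct, but the justification for the $\Lambda^\gamma$ should be attached to this case rather than to ``slack in the contraction estimate.'' With this reindexing your proof matches the paper's.
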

\begin{proof}
Let $D$ be the maximal diameter of the homogeneous components of $\cM$.
Suppose $\omega$ satisfies $\omega_0=\bar\omega_0$. For any $x,y\in W_{+,\alpha}$ and $\alpha\in\cA_+$,
$
C\Lambda^n d(x,y) \leq d(\cF_n(\omega,x),\cF_n(\omega,y)) \leq D
$
if $0\leq n< s_+(\omega;x,y)$. This follows from the uniform hyperbolicity (Lemma~\ref{lem:hyperbolic}). We then arrive at
$
d(x,y) \leq \bar C \Lambda\Lambda^{-s_+(\omega;x,y)}
$
with the uniform constant $\bar C = DC^{-1}$. We either have $s_+(\omega;x,y)>1$ or $s_+(\omega;x,y)=1$. In the first case, $x$ and $y$ belong to the same homogeneous component of $\cM\setminus \cS_{\bar\omega_0}$, so the assumed H\"older continuity of $f$ yields
$
|f(x)-f(y)| \leq A_f(\bar C \Lambda\Lambda^{-s_+(\omega;x,y)})^\gamma.
$
In the second case, $|f(x)-f(y)|\leq 2\|f\|_\infty = 2\|f\|_\infty\Lambda^{\gamma}(\Lambda^{-\gamma})^{s_+(\omega;x,y)}$. The claim concerning $\cH_+(\omega;\Lambda^{-\gamma})$ follows. Assume now that $x,y\in W_{-,\alpha}$ and $\alpha\in\cA_-$, instead. By similar reasoning,
$
d(x,y) \leq \bar C\Lambda \Lambda^{-s_-(\omega;x,y)}.
$
Forward images of homogeneous local stable manifold do not, by construction, meet singularity curves, so that $W_{-,\alpha}$ belongs to a single homogeneous component of $\cM\setminus\cS_{\bar\omega_0}$. Thus, $
|f(x)-f(y)| \leq A_f(\bar C \Lambda\Lambda^{-s_-(\omega;x,y)})^\gamma.
$ 
\end{proof}

\begin{remark}\label{rem:fixed_dyn_Holder}
All of the above applies when the billiard map does not depend on time (fixed scatterers). In that case the partitions~$\cP_\pm$ are defined by the given scatterer configuration instead of a sequence, and the same is true of the separation times~$s_\pm$ as well as the function classes~$\cH_\pm$.
\end{remark}

\subsection{Pair correlation bounds}\label{sec:proof_pair_corr}
We say that $\omega\in\Omega$ has a saturating past, if there exists an $i_0<0$ such that $\omega_i=\omega_{i_0}$ for all $i\leq i_0$. Denote by $\Omega_\mathrm{sat}\subset\Omega$ the set of all sequences in $\Omega$ with a saturating past. Notice that 
\beqn
\sigma(\Omega_\mathrm{sat})\subset \Omega_\mathrm{sat},
\eeqn
that is, the left shift of a sequence with a saturating past has a saturating past. The class $\Omega_\mathrm{sat}$ will be needed in place of $\Omega$ because of Lemma~\ref{lem:proper_tail}; see the remark after it.

The next result gives an exponential upper bound on pair correlation functions for observables of the type introduced above.

\begin{thm}\label{thm:pair_bound}
For a sufficiently small $\ve>0$, there exist uniform constants $C_\mathrm{p}>0$, $C>0$, $\zeta>0$, and $\chi>0$ such that, for every $\vartheta_\pm\in(0,1)$, every $\omega\in\Omega_\mathrm{sat}$, every $n\geq 0$, and every pair $(f,g)\in\cH_+(\omega;\vartheta_+)\times \cH_-(\sigma^n\omega;\vartheta_-)$,
\beqn
\begin{split}
& 
\left|\int f\cdot g\circ\cF_{n}(\omega,\slot)\,\rd\mu-\int f\,\rd\mu\int g\,\rd\mu\right|
\\
& \qquad 
\leq \| f\|_\infty B_-(\sigma^n\omega;\vartheta_-,g)\,(\theta(\vartheta_-))^{n/4} +  2C_\mathrm{p}\|f\|_\infty\|g\|_\infty\, e^{-n/4\chi}+ 2K_+(\omega;\vartheta_+,f)\|g\|_\infty\, \vartheta_+^{n/2-1}.
\end{split}
\eeqn
Here $B_-(\sigma^n\omega;\vartheta_-,g)=C\|g\|_\infty + K_-(\sigma^n\omega;\vartheta_-,g)\,\vartheta_-^{-1}$ and $\theta(\vartheta_-) = \max(\zeta,\vartheta_-^{1/2})\in(0,1)$.
\end{thm}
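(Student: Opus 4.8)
The plan is to decouple the observables $f$ and $g$ across the time gap of length $n$ by inserting $\cF_{n/2}$ as a ``pivot'', combining the exponential loss of memory (Theorem~\ref{thm:weak_conv}, or rather its refinement to unstable curves) with the contraction of stable manifolds. Concretely, write $\int f\cdot g\circ\cF_n\,\rd\mu - \int f\,\rd\mu\int g\,\rd\mu$ and decompose the time interval $[0,n]$ as $[0,n/2]\cup[n/2,n]$. First I would approximate $g$ by a function $g_1$ that is \emph{constant along homogeneous local stable manifolds of $\sigma^{n/2}\omega$ pushed back to time $n/2$}: since $g\in\cH_-(\sigma^n\omega;\vartheta_-)$ and stable manifolds contract, points on the same such stable leaf have past separation time (relative to $\sigma^n\omega$, after $n/2$ steps) at least $n/2$, giving $\|g\circ\cF_{n/2,\ldots}-g_1\|\lesssim K_-(\sigma^n\omega;\vartheta_-,g)\,\vartheta_-^{n/2}$; this is the source of the first term $B_-(\sigma^n\omega;\vartheta_-,g)(\theta(\vartheta_-))^{n/4}$, with the square root appearing because one optimizes $\vartheta_-^{n/2}$ against $\zeta^{n/2}$ to get $\theta(\vartheta_-)=\max(\zeta,\vartheta_-^{1/2})$.

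Next, the standard billiard device: the measure obtained by conditioning $\mu$ on an element of the unstable partition $\cP_+^{(n/2)}$ (associated to the past of $\sigma^{n/2}\omega$, which is well-defined and saturating because $\omega\in\Omega_\mathrm{sat}$ and $\sigma$ preserves $\Omega_\mathrm{sat}$) and pushing it forward by $\cF_{n/2,\ldots,n}(\sigma^{n/2}\omega,\slot)$ converges, at uniform exponential rate, to $\mu$. Apply the single-curve version of Theorem~\ref{thm:weak_conv} (the ``Lemma~\ref{lem:memory_loss}'' alluded to in Section~\ref{sec:memory_loss}) with $g_1$ playing the role of a $\vartheta_-$-Hölder-type observable on the second half of the trajectory: integrating over the unstable leaf and then over the transverse direction, one gets that $\int (g_1\circ\cF_{n/2,\ldots,n})\cdot(\text{something depending on }f)\,\rd\mu$ is close, up to $O((\theta(\vartheta_-))^{n/4})$ or $O(e^{-n/(4\chi)})$, to $\int g_1\,\rd\mu$ times the integral of that $f$-factor; the constant $\chi$ governs the rate at which a typical unstable leaf grows to ``fill'' $\cM$. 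For the $f$ side, I would approximate $f\circ$(the first-half dynamics is irrelevant here; rather) $f$ is already adapted to $\cP_+$: using $f\in\cH_+(\omega;\vartheta_+)$, the value of $f$ on an unstable leaf of $\cP_+^{(0)}$ varies by at most $K_+(\omega;\vartheta_+,f)\vartheta_+^{s_+}$, but to use the leaf of $\cP_+^{(n/2)}$ we pull back — this produces the third term $2K_+(\omega;\vartheta_+,f)\|g\|_\infty\vartheta_+^{n/2-1}$, the $-1$ coming from the bound $s_+\geq$ something like $n/2-1$ after refining the partition $n/2$ times and the factor $2$ absorbing the $\|f\|_\infty$ vs.\ oscillation trade-off at separation time $0$.

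Collecting the three error contributions — the stable-approximation error for $g$, the loss-of-memory / equidistribution error (which contributes both a $(\theta(\vartheta_-))^{n/4}$ piece absorbed into the first term and the ``geometric'' $e^{-n/(4\chi)}$ piece with its own constant $C_\mathrm{p}$), and the unstable-approximation error for $f$ — yields exactly the claimed bound, after bookkeeping the factors $\|f\|_\infty$, $\|g\|_\infty$ and rechristening $B_-(\sigma^n\omega;\vartheta_-,g)=C\|g\|_\infty+K_-(\sigma^n\omega;\vartheta_-,g)\vartheta_-^{-1}$. The main obstacle I expect is \emph{not} the abstract scheme but its technical execution in the time-dependent, non-stationary setting: one must verify that the conditional densities of $\mu$ on elements of $\cP_+^{(n/2)}(\omega)$ are uniformly $\tfrac16$-Hölder (or whatever regularity Lemma~\ref{lem:memory_loss} demands) with constants independent of $\omega$ and $n$ — this is where the uniform hyperbolicity and uniform distortion bounds from Lemma~\ref{lem:hyperbolic} and the machinery of \cite{StenlundYoungZhang_2012} are essential — and that the ``growth lemma'' controlling how fast a single unstable curve equidistributes holds with the uniform constant $\chi$ along \emph{arbitrary} admissible sequences with saturating past. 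A secondary delicate point is handling the white-disk intermediate collisions (the $F_\bfc^*$ vs.\ $F_\bfc$ distinction and item (iii) in the definition of local stable manifolds), so that the separation-time estimates are not corrupted by the extra legs; the free-zone condition, ensuring at most one white-disk hit per return, is what keeps this under control.
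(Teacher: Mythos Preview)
Your strategy is essentially the one the paper uses: pivot at the midpoint, disintegrate $\mu$ along the unstable partition $\cP_+^{(n/2)}$, control the oscillation of $f\circ\cF_{n/2}^{-1}$ on each unstable leaf via the $\cH_+$-condition (this is the paper's term $I_1$, yielding the $\vartheta_+^{n/2-1}$ contribution), and invoke the single-curve memory-loss lemma (Lemma~\ref{lem:memory_loss}) for the leaf integrals of $\bar g\circ\cF_{n,n/2+1}$ (the paper's term $I_2$).

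Two points where your outline diverges from, or is vaguer than, the paper. First, you propose to \emph{first} replace $g$ by a function $g_1$ constant on stable leaves and \emph{then} apply memory loss to $g_1$. The paper does not do this: it applies Lemma~\ref{lem:memory_loss} directly to $g\in\cH_-(\sigma^n\omega;\vartheta_-)$, and the $\vartheta_-$-regularity enters \emph{inside} the coupling argument (coupled pairs lie on common stable leaves, so their past separation time is large). Your route can be made to work --- a stable-leaf-constant $g_1$ is trivially in $\cH_-$ with $K_-=0$ --- but it introduces an extra layer that the paper avoids, and you would have to be careful that the approximation error $K_-\vartheta_-^{n/2}$ combines correctly with the coupling error to produce exactly $B_-(\sigma^n\omega;\vartheta_-,g)(\theta(\vartheta_-))^{n/4}$ rather than two separate terms.

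Second, the $2C_\mathrm{p}\|f\|_\infty\|g\|_\infty e^{-n/4\chi}$ term does not arise from ``the rate at which a leaf grows to fill $\cM$'' per se. The bound in Lemma~\ref{lem:memory_loss} carries a penalty $\theta^{-\chi|\log|W||}$ for short leaves $W$, which is useless when $|W|$ is tiny. The paper therefore splits $\cA_+^{(n/2)}$ at a threshold length $\ell=e^{-n/4\chi}$: long leaves use Lemma~\ref{lem:memory_loss}, while the \emph{total $\lambda$-measure of short leaves} is controlled by Lemma~\ref{lem:proper_tail} (the bound $\lambda\{|W|<\ell\}\leq C_\mathrm{p}\ell$), and this is precisely where the saturating-past assumption $\omega\in\Omega_{\mathrm{sat}}$ is used. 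You allude to the growth lemma and saturating past at the end, but you should recognize that this short-leaf tail estimate is the specific mechanism producing the $C_\mathrm{p}$ term.
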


Recalling Lemma~\ref{lem:Holder_dyn_Holder}, we point out that the previous theorem applies to the special case of ``ordinary'' H\"older continuous observables and yields an exponential rate of pair correlation decay for them.

As we will see later on, Theorem~\ref{thm:pair_bound} is surprisingly strong in that it implies a bound on \emph{multiple} correlation functions, which is seemingly much more general a result. To understand how this is possible, one has to appreciate two aspects of Theorem~\ref{thm:pair_bound}. First, it involves carefully designed and sufficiently large classes of observables. Second, the bound is completely explicit in regard to its dependence on the observables $f$ and $g$. In conjunction with Lemma~\ref{lem:products_regular} below, Theorem~\ref{thm:pair_bound} thus becomes a very useful tool for proving limit results.

\begin{proof}[Proof of Theorem~\ref{thm:pair_bound}]
Denoting
\beqn
\bar g=g-\int g\,\rd\mu,
\eeqn
we have
\beqn
\begin{split}
\int f\cdot g\circ \cF_{n+m}(\omega,\slot)\,\rd\mu-\int f\,\rd\mu\int g\,\rd\mu
= \int f\circ\cF_n^{-1}(\omega,\slot)\cdot \bar g\circ\cF_{n+m,n+1}(\omega,\slot)\,\rd\mu
\end{split}
\eeqn
for any fixed $\omega$. As $\omega$ is fixed for the rest of the proof, we will often omit it from the notation. Using the partition~$\cP_+^{(n)}$, the measure~$\mu$ disintegrates into a probability measure $\lambda^{(n)}$ on $\cA_+^{(n)}$ and a system of conditional probability measures $\nu_\alpha^{(n)}$ on the partition elements $W_{+,\alpha}^{(n)}$ such that
\beqn
\int h\,\rd\mu = \int_{\cA_+^{(n)}}\int_{W_{+,\alpha}^{(n)}} h \,\rd\nu_\alpha^{(n)}\,\rd\lambda^{(n)}(\alpha)
\eeqn
holds for any Borel measurable function $h:\cM\to\bC$.
We can thus write the above identity as
\beqn
\int f\cdot g\circ \cF_{n+m}\,\rd\mu-\int f\,\rd\mu\int g\,\rd\mu = I_1 + I_2
\eeqn
where
\beqn
I_1 = \int_{\cA_+^{(n)}}\int_{W_{+,\alpha}^{(n)}} \left[ f\circ\cF_n^{-1} - \int_{W_{+,\alpha}^{(n)}} f\circ\cF_n^{-1}\,\rd\nu_\alpha^{(n)}\right]\cdot \bar g\circ\cF_{n+m,n+1}\,\rd\nu_\alpha^{(n)}\,\rd\lambda^{(n)}(\alpha)
\eeqn
and
\beqn
I_2 = \int_{\cA_+^{(n)}} \left[ \int_{W_{+,\alpha}^{(n)}} f\circ\cF_n^{-1}\,\rd\nu_\alpha^{(n)}\right]\cdot \left[\int_{W_{+,\alpha}^{(n)}} \bar g\circ\cF_{n+m,n+1}\,\rd\nu_\alpha^{(n)}\right]\rd\lambda^{(n)}(\alpha).
\eeqn

Let us bound $I_1$ first. Because each $\nu_\alpha^{(n)}$ is a probability measure,
\beqn
\begin{split}
&
|I_1|
\leq \sup_{\cA_+^{(n)}}\sup_{x,y\in W_{+,\alpha}^{(n)}} \left( f\circ\cF_n^{-1}(x) - f\circ\cF_n^{-1}(y)\right)\cdot \int |\bar g|\,\rd\mu.
\end{split}
\eeqn
For any $\alpha\in\cA_+^{(n)}$ there exists a $\beta\in\cA_+$ such that for any pair of points $x,y\in W_{+,\alpha}^{(n)}$ we have $\cF_n^{-1}(x),\cF_n^{-1}(y)\in W_{+,\beta}$. Therefore,
\beqn
 s_+\bigl(\omega;\cF_n^{-1}(\omega,x),\cF_n^{-1}(\omega,y)\bigr) \geq n.
\eeqn
Since $f\in\cH_+(\omega;\vartheta_+)$, we obtain the bound
\beqn
|f\circ\cF_n^{-1}(x) - f\circ\cF_n^{-1}(y)|\leq K_+(\omega;\vartheta_+,f)\vartheta^n,
\eeqn
for all $x,y\in W_{+,\alpha}$, for all $\alpha\in\cA_+$. Now $\sup|\bar g|\leq 2\sup|g|$ yields
\beq\label{eq:I_1_last_bound}
|I_1|\leq  2\sup|g|\,K(\omega;\vartheta_+,f)\vartheta_+^n.
\eeq

Coming to $I_2$, notice that
\beq\label{eq:I_2_first_bound}
\begin{split}
|I_2| \leq \sup | f| \cdot \int_{\cA_+^{(n)}}  \left|\int_{W_{+,\alpha}^{(n)}} \bar g\circ\cF_{n+m,n+1}\,\rd\nu_\alpha^{(n)}\right| \rd\lambda^{(n)}(\alpha).
\end{split}
\eeq
The following result is at the heart of bounding $I_2$. Recall that $\ve$ is the upper bound on the displacement of the center of the white disk from the center of the unit square. 

\begin{lem}\label{lem:memory_loss}
For a sufficiently small $\ve>0$, there exist uniform constants $C>0$, $\chi>0$, and $\zeta>0$ such that, for every $\omega$, every $\vartheta_-\in(0,1)$, and every $g\in\cH_-(\sigma^{n+m}\omega;\vartheta_-)$,
\beq\label{eq:memory_loss}
\begin{split}
\left|\int_{W_{+,\alpha}^{(n)}} \bar g\circ\cF_{n+m,n+1}\,\rd\nu_\alpha^{(n)}\right| \leq B_-(\sigma^{n+m}\omega;\vartheta_-,g)\,(\theta(\vartheta_-))^{m-\chi\left|\log|W_{+,\alpha}^{(n)}| \right |}
\end{split}
\eeq
for all $\alpha\in\cA^{n}(\omega)$ and all $n,m\geq 0$.  Here $B_-(\sigma^{n+m}\omega;\vartheta_-,g)=C\|g\|_\infty + K_-(\sigma^{n+m}\omega;\vartheta_-,g)\,\vartheta_-^{-1}$ and $\theta(\vartheta_-) = \max(\zeta,\vartheta_-^{1/2})\in(0,1)$.
\end{lem}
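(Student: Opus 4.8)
\textbf{Proof plan for Lemma~\ref{lem:memory_loss}.}
The plan is to reduce the estimate to an application of the time-dependent loss-of-memory result (Theorem~\ref{thm:weak_conv}), run on the single unstable curve $W_{+,\alpha}^{(n)}$, after first absorbing the pieces of the curve that lie too close to the singularity set. Write $W=W_{+,\alpha}^{(n)}$ and let $\nu=\nu_\alpha^{(n)}$; recall that $\cF_{n+m,n+1}(\omega,\slot)=\cF_m(\sigma^n\omega,\slot)$, so the left side is $|\int_W \bar g\circ \cF_m(\sigma^n\omega,\slot)\,\rd\nu|$ with $\bar g = g-\int g\,\rd\mu$. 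The first step is a \emph{growth/subdivision argument} in the style of the standard ``growth lemma'' for dispersing billiards: iterating $\cF_m(\sigma^n\omega,\slot)$, the image curve is cut by singularities into countably many homogeneous pieces, but by uniform hyperbolicity (Lemma~\ref{lem:hyperbolic}) short pieces grow exponentially while the total measure carried by pieces that remain shorter than a fixed length decays exponentially — uniformly in the sequence, which is exactly what \cite{StenlundYoungZhang_2012} provides. The initial length $|W|$ enters because a very short starting curve needs on the order of $\chi|\log|W||$ iterates before it has grown to unit size; this accounts for the shift $m\mapsto m-\chi|\log|W||$ in the exponent, and when $m\le \chi|\log|W||$ the bound is vacuous since $\theta^{\text{negative}}\ge 1$ and $|\int_W \bar g\,\rd\nu|\le 2\|g\|_\infty\le B_-\cdot\theta^{0}$ trivially (adjusting $C$).

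The second step handles the dynamical H\"older regularity of $g$ along stable manifolds. Here we do \emph{not} want to apply Theorem~\ref{thm:weak_conv} to $g$ directly, since $g\in\cH_-(\sigma^{n+m}\omega;\vartheta_-)$ is only dynamically H\"older, not H\"older in the Euclidean sense. Instead, project: on the image curve $\cF_m(\sigma^n\omega,W)$, decompose $\bar g$ into its average over each homogeneous local stable manifold plus a fluctuation controlled by $K_-(\sigma^{n+m}\omega;\vartheta_-,g)\,\vartheta_-^{s_-}$. The averaged part is constant along stable leaves, hence descends to a function of the unstable coordinate that is honestly H\"older (with exponent tied to $\Lambda$ via the uniform contraction of stable leaves), and to this part one applies the single-curve refinement of Theorem~\ref{thm:weak_conv}: pushing the measure $\nu$ on $W$ forward by $\cF_m(\sigma^n\omega,\slot)$ converges to $\mu$ at the uniform rate $\zeta^m$ (up to the $|W|$-correction), which is why $\zeta$ appears inside $\theta(\vartheta_-)=\max(\zeta,\vartheta_-^{1/2})$. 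The fluctuation part, being bounded pointwise by $K_-\vartheta_-^{s_-(x,y)}$ where $s_-$ along the contracted leaf is essentially $m$ minus a constant, contributes $K_-(\sigma^{n+m}\omega;\vartheta_-,g)\,\vartheta_-^{-1}\cdot\vartheta_-^{m/\text{(const)}}$; taking the larger of the two exponential rates, i.e. $\vartheta_-^{1/2}$ versus $\zeta$, and collecting the prefactors $C\|g\|_\infty$ (from the weak-convergence constant $C_\gamma$, which is uniform since the density of $\nu$ relative to the reference measure on $W$ has uniformly bounded logarithm — absolute continuity of unstable leaves) and $K_-\vartheta_-^{-1}$, yields exactly $B_-(\sigma^{n+m}\omega;\vartheta_-,g)$ as defined.

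The third step is bookkeeping: check that the constants $C,\chi,\zeta$ can be chosen \emph{uniformly} in $\omega$, $n$, $m$, $\alpha$, and in $\vartheta_-\in(0,1)$. Uniformity in $\omega$ and $n$ is inherited from the time-dependent theory of \cite{StenlundYoungZhang_2012} together with the saturating-past convention (which guarantees $\cP_+^{(n)}(\omega)$ is a genuine measurable partition into unstable leaves with the required distortion bounds); uniformity in $\vartheta_-$ is automatic once the two competing rates are separated as $\max(\zeta,\vartheta_-^{1/2})$, noting that the exponent $1/2$ (rather than $1$) is the usual concession one makes so that the H\"older-exponent-of-the-averaged-function stays bounded away from $0$ uniformly in $\vartheta_-$.

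\textbf{Main obstacle.} The genuinely hard part is the single-curve version of the loss of memory with the precise $|W|$-dependent shift in the exponent: one must show that the constant $C_\gamma=C_\gamma(\rho^1)$ in Theorem~\ref{thm:weak_conv}, which a priori depends on the H\"older constant of $\log\rho^1$, can be controlled when $\mu^1$ is the (singular) measure $\nu$ supported on a short unstable curve $W$, and that the only price paid is the additive $\chi|\log|W||$ delay. This requires the ``first-growth'' estimate — after order $\chi|\log|W||$ steps the forward image of $W$, suitably subdivided, looks like a standard family of unstable curves of unit-scale length carrying a density with uniformly bounded logarithmic oscillation — which is precisely the content one extracts from the uniform growth lemma and distortion bounds in \cite{StenlundYoungZhang_2012}; assembling these into the clean inequality \eqref{eq:memory_loss} with a \emph{single} exponential base $\theta(\vartheta_-)$ is where the care lies.
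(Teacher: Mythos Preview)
Your first step is right in spirit and matches the paper: one waits $m'=\chi|\log|W||$ iterates so that the push-forward of $\nu$ along the short unstable curve becomes a ``proper'' standard family, and the bound is vacuous for $m\le m'$. But from that point on the paper does \emph{not} use a projection-along-stable-leaves argument; it invokes the time-dependent \emph{coupling} construction of \cite{StenlundYoungZhang_2012} directly. Two proper measures $\mu^1,\mu^2$ are decomposed as an uncoupled remainder of mass $\le C\zeta^{m-m'}$ plus coupled pieces $\bar\mu^i_j$, $0\le j\le\lceil(m-m')/2\rceil$, where $(\cF_{n+m'+j,n+m'+1})_*\bar\mu^i_j$ are supported on a common ``magnet'' of homogeneous local stable manifolds at time $n+m'+j$, with matching masses on each leaf. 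For $x,y$ on such a leaf one has $s_-\bigl(\sigma^{n+m}\omega;\cF_{n+m,n+m'+j+1}(x),\cF_{n+m,n+m'+j+1}(y)\bigr)\ge m-m'-j$, and the dynamical H\"older condition on $g$ then gives the contribution $K_-\vartheta_-^{-1}(\vartheta_-^{1/2})^{m-m'}$ after summing over $j$.

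Your second step contains a genuine gap. You average $g$ over local stable leaves at time $n+m$ and assert that the fluctuation is $O\bigl(K_-\vartheta_-^{s_-}\bigr)$ ``where $s_-$ along the contracted leaf is essentially $m$ minus a constant''. This is false: for two points $z,y$ on the \emph{same} local stable manifold $W_{-,\beta}^{(n+m)}$, the past separation time $s_-(\sigma^{n+m}\omega;z,y)$ is computed by iterating \emph{backward} from time $n+m$, and stable leaves \emph{expand} under backward iteration --- so $s_-$ can perfectly well be $0$ or $1$, not $\approx m$. The membership $g\in\cH_-(\sigma^{n+m}\omega;\vartheta_-)$ therefore gives you only $|g-g_s|\le K_-$, with no decay in $m$ at all. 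The decay in the paper comes precisely from the fact that the coupling places the paired points on stable leaves at the \emph{intermediate} times $n+m'+j$, so that after pushing forward the remaining $m-m'-j$ steps one gains exactly that many units of past separation time. Your projection at the terminal time $n+m$ loses this mechanism entirely. (Projecting instead at time $n$ fixes the fluctuation but leaves you with an averaged function whose dependence on $m$ is uncontrolled, so the first term fails; an intermediate-time projection would essentially force you to rebuild the coupling argument.)

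A secondary issue: your claim that the stable-leaf average $g_s$ is ``honestly H\"older'' with constant uniform in $m$ is not justified --- $g$ is only dynamically H\"older, and the conditional expectation along a measurable (not smooth) foliation need not inherit Euclidean H\"older regularity with controlled constants. In the paper this problem simply does not arise, because the $\|g\|_\infty$-term comes from the uncoupled mass bound $C\zeta^{m-m'}$, which requires nothing of $g$ beyond boundedness.
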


We remark that the above lemma is a statement of statistical memory loss starting from an initial measure supported on a single unstable curve. This is the strong version of Theorem~\ref{thm:weak_conv} hinted at in the last paragraph of Section~\ref{sec:memory_loss}. 
\begin{proof}[Proof of Lemma~\ref{lem:memory_loss}]
This follows directly from the time-dependent coupling argument in \cite{StenlundYoungZhang_2012} as we now explain. In \cite{StenlundYoungZhang_2012} we proved the analogous statement for ``ordinary'' H\"older continuous observables~$g$. Here we just need to check that the estimate one obtains for~$g\in\cH_-(\sigma^{n+m}\omega;\vartheta_-)$ is the one appearing in~\eqref{eq:memory_loss}.
Because
\beqn
\begin{split}
\int_{W_{+,\alpha}^{(n)}} \bar g\circ\cF_{n+m,n+1}\,\rd\nu_\alpha^{(n)} = \int_{W_{+,\alpha}^{(n)}} g\circ\cF_{n+m,n+1}\,\rd\nu_\alpha^{(n)} - \int_\cM g\circ\cF_{n+m,n+1}\,\rd\mu,
\end{split}
\eeqn
the desired bound is related to the convergence of the push-forward $(\cF_{n+m,n+1})_*\nu_\alpha^{(n)}$ to the (invariant) measure $\mu$ with increasing~$m$ in a weak sense. It is shown in~\cite{StenlundYoungZhang_2012} that there exists a uniform constant $\chi>0$ such that, for $m'= \bigl[\chi\bigl |\log|W_{+,\alpha}^{(n)}|\bigr|\bigr]$, the measure $(\cF_{n+m',n+1})_*\nu_\alpha^{(n)}$ is ``proper'' in the sense of~\cite{StenlundYoungZhang_2012}. (The terminology originated in~\cite{Chernov_2006} and the references therein.) Moreover, the measure $\mu$ is readily proper. 
It is therefore certainly enough to show that 
\beq\label{eq:mem_loss_temp}
\left|\int g \circ \cF_{n+m,n+m'+1} \,\rd\mu^1 - \int g \circ \cF_{n+m,n+m'+1} \,\rd\mu^2 \right|\leq B_-(\sigma^{n+m}\omega;\vartheta_-,g)\,(\theta(\vartheta_-))^{m-m'}
\eeq
for any two proper measures $\mu^1$ and $\mu^2$.

We recall from \cite{StenlundYoungZhang_2012} that, for each $m\geq m'+1$, two proper measures $\mu^i$ ($i=1,2$) can be decomposed as $\mu^i_{\lceil (m-m')/2 \rceil} + \sum_{j=0}^{\lceil (m-m')/2 \rceil}\bar\mu^i_j$. Here $\mu^i_{\lceil (m-m')/2 \rceil}$ are the measures that remain \emph{uncoupled} after $\lceil (m-m')/2 \rceil$ steps (counting from time $n+m'$). Provided $\ve>0$ is small enough, we can assume that they satisfy $\mu^1_{\lceil (m-m')/2 \rceil}(\cM) = \mu^2_{\lceil (m-m')/2 \rceil}(\cM)\leq C\zeta^{m-m'}$ for uniform constants $C>0$ and $\zeta\in(0,1)$. Therefore,
\beqn
\begin{split}
&\left|\int g \circ \cF_{n+m,n+m'+1} \,\rd\mu^1 - \int g \circ \cF_{n+m,n+m'+1} \,\rd\mu^2 \right|
\\
&\qquad\qquad \leq   C\|g\|_\infty \zeta^{m-m'} + 
\sum_{j=0}^{\lceil (m-m')/2 \rceil} \left|\int g \circ \cF_{n+m,n+m'+1} \,\rd\bar\mu^1_j - \int g \circ \cF_{n+m,n+m'+1} \,\rd\bar\mu^2_j \right|.
\end{split}
\eeqn
On the other hand, the measures $(\cF_{n+m'+j,n+m'+1})_*\bar\mu_j^i$ ($i=1,2$) are \emph{coupled}: Both measures are supported on a family (called a ``magnet'') of homogeneous local stable manifolds $W^s_{n+m'+j}$, corresponding to the future $(\omega_{n+m'+j},\omega_{n+m'+j+1}\dots)$, in such a way that their masses on each such local stable manifold agree. In particular $\bar\mu_j^1(\cM) = \bar\mu_j^2(\cM)$. Notice that, for each pair of points $x,y$ in the same local stable manifold~$W^s_{n+m'+j}$, 
\beqn
\begin{split}
& |g\circ  \cF_{n+m,n+m'+j+1}(x)-g\circ \cF_{n+m,n+m'+j+1}(y)|
\\
& \qquad\qquad
\leq K_-(\sigma^{n+m}\omega;\vartheta_-,g)\,\vartheta_-^{s_-^{n+m}(\cF_{n+m,n+m'+j+1}(x),\cF_{n+m,n+m'+j+1}(y))}
\\
& \qquad\qquad
\leq K_-(\sigma^{n+m}\omega;\vartheta_-,g)\,\vartheta_-^{m-m'-j + s_-^{n+m'+j}(x,y)} \leq K_-(\sigma^{n+m}\omega;\vartheta_-,g)\,\vartheta_-^{m-m'-j},
\end{split}
\eeqn
because $g\in\cH_-(\sigma^{n+m}\omega;\vartheta_-)$. Since $\sum_{j}\bar\mu_j^i(\cM)=1$, it then follows that 
\beqn
\begin{split}
&\sum_{j=0}^{\lceil (m-m')/2 \rceil}\left|\int g \circ \cF_{n+m,n+m'+1} \,\rd\bar\mu^1_j - \int g \circ \cF_{n+m,n+m'+1} \,\rd\bar\mu^2_j \right|
\\
&\qquad\qquad
\leq K_-(\sigma^{n+m}\omega;\vartheta_-,g) \sum_{j=0}^{\lceil (m-m')/2 \rceil}\bar\mu_j^1(\cM)\,\vartheta_-^{m-m'-j}
\leq K_-(\sigma^{n+m}\omega;\vartheta_-,g)\,\vartheta_-^{-1}(\vartheta_-^{1/2})^{m-m'}.
\end{split}
\eeqn
We refer the interested reader to \cite{StenlundYoungZhang_2012} for a detailed construction of the coupling used above. Combining the obtained estimates yields~\eqref{eq:mem_loss_temp}, which was to be shown.
\end{proof}

In order for Lemma~\ref{lem:memory_loss} to yield a useful bound on the inner integral in~\eqref{eq:I_2_first_bound}, we need to assume $m\gg \chi\bigl|\log|W_{+,\alpha}^{(n)}|\bigr|$ in the exponent appearing in~\eqref{eq:memory_loss}. Thus, for a fixed value of $m$, the partition element $W_{+,\alpha}^{(n)}$ should not be too short. The following lemma provides a tail estimate on the prevalence of short partition elements.

\begin{lem}\label{lem:proper_tail}
There exists a uniform constant $C_\mathrm{p}>0$ such that the following holds. If $\omega\in\Omega_\mathrm{sat}$, then the factor measure $\lambda^{(n)}$ of the measure $\mu$ relative to the partition $\cP_+^{(n)}(\omega)$ satisfies
\beq\label{eq:proper_tail}
\lambda^{(n)}\bigl\{\alpha\in\cA_+^{(n)}\,:\,|W_{+,\alpha}^{(n)}|< \ell\bigr\}\leq C_\mathrm{p}\ell
\eeq
for all $\ell>0$ and for all $n\geq 0$. 
\end{lem}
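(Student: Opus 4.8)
The plan is to reduce the bound~\eqref{eq:proper_tail} to a standard "growth lemma" type estimate for dispersing billiards, exploiting the saturating-past assumption to make the relevant partition honestly a partition into homogeneous local unstable manifolds of a genuine (semi-)infinite composition with a periodic tail. The key point is that $\mu$ is invariant and that the partition $\cP_+^{(n)}(\omega)$ is, by the discussion in Section~\ref{sec:proof_defs}, a refinement of $F_{\omega_{n-1}}(\cP_+^{(n-1)}(\omega))$ obtained by cutting along homogeneity strips and singularity curves. So a short element of $\cP_+^{(n)}(\omega)$ arises either because it sits close (in the unstable direction) to the image of the singularity/homogeneity set of $F_{\omega_{n-1}}$, or because it was already short one step earlier. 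This is exactly the dichotomy that drives the one-step expansion estimate.

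First I would set up the one-step inequality. Fix $n$ and write, for $\ell>0$,
\beqn
\lambda^{(n)}\bigl\{\alpha : |W_{+,\alpha}^{(n)}|<\ell\bigr\}
\leq
\lambda^{(n)}\bigl\{\alpha : \cF_1(\sigma^{n-1}\omega,\slot)^{-1} \text{ of } W_{+,\alpha}^{(n)} \text{ has length } <\Lambda^{-1}\ell_0\bigr\} + (\text{boundary term}),
\eeqn
where the boundary term accounts for those $\alpha$ whose preimage element under $F_{\omega_{n-1}}$ is long but gets cut short by the singularity set $\cS_{\omega_{n-1}}$ and the homogeneity strips. The uniform hyperbolicity of Lemma~\ref{lem:hyperbolic} gives a uniform expansion factor at least $C\Lambda$, which contracts short elements under $\cF_1^{-1}$; and the fact (recalled in Section~\ref{sec:sing}) that $\cS_\bfc$ consists of a uniformly bounded number of stable curves, together with the bounded-complexity and distortion control of the billiard map, bounds the $\mu$-measure (equivalently $\lambda$-measure after disintegration) of the set of $\alpha$ landing within distance $\ell$ of the image of the singularity curves by $\mathrm{const}\cdot\ell$. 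This is the classical growth lemma mechanism; the time-dependent, uniform version of it is precisely what~\cite{StenlundYoungZhang_2012} provides under the hypotheses we have verified. Iterating the one-step inequality over the $n$ steps and then over the periodic tail (this is where $\omega\in\Omega_\mathrm{sat}$ enters: past the index $i_0$ the map is constant, so one is iterating a single fixed billiard map $F_{\omega_{i_0}}$ for which the classical growth lemma of~\cite{ChernovMarkarian_2006} applies verbatim to produce the geometric convergence of the "short" mass to a fixed multiple of $\ell$), I expect the geometric series in the contraction factor to sum, yielding a constant $C_\mathrm{p}$ independent of $\omega$, $n$, and $\ell$.

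The cleaner way to organize this, which I would actually adopt, is: because the tail of $\omega$ is constant equal to some $\bfc_*=\omega_{i_0}$, the partition $\cP_+(\omega)$ into homogeneous local unstable manifolds is obtained by first taking the (stationary) partition for the fixed map $F_{\bfc_*}$ — for which the tail bound $\lambda\{|W|<\ell\}\le C_{\mathrm p}'\ell$ is classical~\cite{ChernovMarkarian_2006} — and then pushing it forward through the finitely many maps $F_{\omega_{i_0+1}},\dots,F_{\omega_{-1}}$ and refining along singularities and homogeneity strips each time. Each such push-forward-and-refine step is covered by a single application of the uniform one-step growth estimate from~\cite{StenlundYoungZhang_2012}, so after finitely many but the bound one gets is still $\le C_{\mathrm p}\ell$ with $C_{\mathrm p}$ uniform because the number of these steps needed to "saturate" is controlled by a uniform constant (the memory-loss rate), not by $i_0$; the further steps back only improve the constant. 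Then $\lambda^{(n)}=\lambda$ for $\sigma^n\omega$, which is again in $\Omega_\mathrm{sat}$ with the same structure, so the bound holds for all $n\ge 0$ with the same $C_{\mathrm p}$.

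The main obstacle I anticipate is bookkeeping the boundary/cutting term uniformly: one must ensure that the measure of partition elements created short \emph{by the current cut} (rather than inherited short) is $\le \mathrm{const}\cdot\ell$ with a constant that does not degrade as one composes, and that the homogeneity-strip cuts — whose number near $\varphi=\pm\pi/2$ is infinite — do not spoil summability. This is the familiar subtlety in billiard growth lemmas, handled by the standard one-step expansion estimate (the expansion near grazing collisions dominates the proliferation of homogeneity strips); here it is already packaged in the uniform machinery of~\cite{StenlundYoungZhang_2012}, so the real work is just to cite it correctly and check that our slightly unusual cross section $\cM$ (with transparent walls) does not introduce extra short pieces — which it does not, by the clean-pass condition~\eqref{eq:transparent_cosine} keeping $\cos\varphi\ge d$ there, so no homogeneity strips other than $\bH_0$ accumulate on transparent walls.
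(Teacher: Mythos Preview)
Your proposal is correct and follows essentially the same approach as the paper: invoke the uniform growth lemma of \cite{StenlundYoungZhang_2012} to propagate the estimate forward, and use the saturating-past assumption to anchor the iteration at a time $i\leq i_0$ where the partition is that of the fixed map $F_{\omega_{i_0}}$, for which the classical theory \cite{ChernovMarkarian_2006} gives the needed bound. The only organizational difference is that the paper does not iterate the tail probability $\lambda^{(n)}\{|W|<\ell\}$ directly but instead the \emph{average inverse length} $\cZ^{(n)}=\int |W_{+,\alpha}^{(n)}|^{-1}\,\rd\lambda^{(n)}(\alpha)$, obtaining $\cZ^{(n+i)}\le \tfrac12 C_\mathrm{p}(1+\vartheta_\mathrm{p}^n\cZ^{(i)})$, then uses $\cP_+^{(i)}=\cP_+^{(i_0)}$ for $i\le i_0$ and sends $i\to-\infty$ to get $\cZ^{(n)}\le\tfrac12 C_\mathrm{p}$ uniformly, and finally recovers \eqref{eq:proper_tail} by Markov's inequality; this sidesteps the awkward $\ell\mapsto\Lambda^{-1}\ell$ tracking in your direct iteration and makes the uniformity in $i_0$ transparent without the ``memory-loss rate'' heuristic.
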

It is possible to prove that, for small $\ve$,~\eqref{eq:proper_tail} holds for all $n\in\bZ$ and all $\omega\in\Omega\supsetneq\Omega_\mathrm{sat}$ (starting from a generalization of Theorem~5.17 of~\cite{ChernovMarkarian_2006}). However, since the proof is long and technical, we resort to the weaker assertion given in Lemma~\ref{lem:proper_tail}, which is perfectly sufficient for our needs.
\begin{proof}[Proof of Lemma~\ref{lem:proper_tail}]
Denoting
$
\cZ^{(n)} = \int_{\cA_+^n} |W_{+,\alpha}^{(n)}|^{-1}  \,\rd\lambda^{(n)}(\alpha)
$
for all $n\in\bZ$, it can be shown using the uniform growth lemma in the time-dependent setting (see \cite{StenlundYoungZhang_2012}) that there exist uniform constants $C_\mathrm{p}>0$ and $\vartheta_\mathrm{p}\in(0,1)$ such that $\cZ^{(n+i)}\leq \tfrac12 C_\mathrm{p}(1+\vartheta_\mathrm{p}^n \cZ^{(i)})$ holds for all $i\in\bZ$, all $n\geq 0$, and all sequences $\omega\in\Omega$. Under the saturating past condition ($\omega\in\Omega_\mathrm{sat}$), $\omega_i=\omega_{i_0}$ holds for all $i\leq i_0$, for some~$i_0$, and it is an exercise in the theory of billiards~\cite{ChernovMarkarian_2006} that $\cZ^{(i_0)}<\infty$ (because the relevant partition $\cP_+^{(i_0)}$ corresponds to the homogeneous unstable manifolds of the \emph{fixed} map $F_{\omega_{i_0}}$). Observing that $\cP_+^{(i)}=\cP_+^{(i_0)}$ for all $i\leq i_0$, we thus obtain $\cZ^{(i_0+m)}\leq \tfrac12 C_\mathrm{p}(1+\vartheta_\mathrm{p}^{i_0+m-i} \cZ^{(i_0)})$ for all $i\leq i_0$ and all $m\geq 0$. Taking the limit $i\to-\infty$, we see in particular that $\cZ^{(n)}\leq \tfrac12 C_\mathrm{p}$ for all $n\geq 0$. Markov's inequality now yields the result: $\lambda^{(n)}\{\alpha\in\cA_+^{(n)}\,:\,|W_{+,\alpha}^{(n)}|< \ell\} \leq \cZ^{(n)}\ell \leq C_\mathrm{p}\ell$.
\end{proof}

We are in position to finish the bound~\eqref{eq:I_2_first_bound} on $I_2$. For any $\ell\in(0,1]$, the decomposition
\beqn
\cA_+^{(n)} = \{\alpha\in\cA_+^{(n)}\,:\,|W_{+,\alpha}^{(n)}|\geq \ell\} \cup \{\alpha\in\cA_+^{(n)}\,:\,|W_{+,\alpha}^{(n)}|< \ell\},
\eeqn
in conjunction with Lemmas~\ref{lem:memory_loss} and~\ref{lem:proper_tail}, yields
\beqn
\begin{split}
|I_2| & \leq \sup | f| \cdot B_-(\sigma^{n+m}\omega;\vartheta_-,g)\,(\theta(\vartheta_-))^{m-\chi\left|\log\ell \right |} \cdot \lambda^{(n)}\{\alpha\in\cA_+^{(n)}\,:\,|W_{+,\alpha}^{(n)}|\geq \ell\}
\\
& \qquad 
+ \sup | f|\cdot \sup|\bar g| \cdot \lambda^{(n)}\{\alpha\in\cA_+^{(n)}\,:\,|W_{+,\alpha}^{(n)}|< \ell\}
\\
& \leq 
 \sup | f| \cdot B_-(\sigma^{n+m}\omega;\vartheta_-,g)\,(\theta(\vartheta_-))^{m-\chi\left|\log\ell \right |} +  \sup |f|\cdot\sup|\bar g|\cdot C_\mathrm{p}\ell
\end{split}
\eeqn
For any $m\geq 0$, we set $\ell=\ell(m)=e^{-m/2\chi}$, so that $\chi|\log\ell|=m/2$. Since $\sup|\bar g|\leq 2\sup|g|$, we get the final bound
\beq\label{eq:I_2_last_bound}
\begin{split}
|I_2| & \leq \sup | f|\cdot B_-(\sigma^{n+m}\omega;\vartheta_-,g)\,(\theta(\vartheta_-))^{m/2} +  2C_\mathrm{p}\sup |f|\sup|g| \,e^{-m/2\chi}.
\end{split}
\eeq

Collecting~\eqref{eq:I_1_last_bound} and~\eqref{eq:I_2_last_bound}, we arrive at
\beqn
\begin{split}
& 
\left|\int f\cdot g\circ\cF_{n+m}\,\rd\mu-\int f\,\rd\mu\int g\,\rd\mu\right|
\\
& \qquad 
\leq \| f\|_\infty B_-(\sigma^{n+m}\omega;\vartheta_-,g)\,(\theta(\vartheta_-))^{m/2} +  2C_\mathrm{p}\,\|f\|_\infty\|g\|_\infty\, e^{-m/2\chi}+ 2\,\|g\|_\infty K_+(\omega;\vartheta_+,f)\,\vartheta_+^n.
\end{split}
\eeqn
Considering separately the two cases $m=n$ and $m=n+1$, and replacing $n+m$ by $n$ in the end, the bound claimed by the theorem follows. This finishes the proof of Theorem~\ref{thm:pair_bound}.
\end{proof}


\subsection{Multiple correlation bounds}\label{sec:proof_multiple_corr} 

As mentioned already, the function spaces introduced above have a special structure. The following lemmas reveal an important facet of that structure. It will serve as the stepping stone from pair correlation bounds to multiple correlation bounds, and eventually to the invariance principle.

\begin{lem}\label{lem:H_shift}
Let $\omega\in\Omega$, $\vartheta\in(0,1)$, and $n\geq 1$ be fixed. For all $f\in\cH_+(\omega;\vartheta)$ and $g\in \cH_-(\sigma^n\omega;\vartheta)$, we have
\beqn
f\circ\cF_n^{-1}(\omega,\slot) \in \cH_+(\sigma^n\omega;\vartheta)
\quad\text{and}\quad
g\circ\cF_n(\omega,\slot)\in \cH_-(\omega;\vartheta)
\eeqn
with
\beqn
K_+(\sigma^n\omega;\vartheta,f\circ\cF_n^{-1}(\omega,\slot))\leq K_+(\omega;\vartheta,f) \,\vartheta^n
\quad\text{and}\quad
K_-(\omega;\vartheta,g\circ\cF_n(\omega,\slot))\leq K_-(\sigma^n\omega;\vartheta,g) \,\vartheta^n.
\eeqn
\end{lem}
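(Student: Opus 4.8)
Both assertions reduce to a single structural fact about separation times: iterating $\cF_n$ shifts the future separation time up by at least $n$, and dually for the past. I would prove the statement about $f$ in full and then obtain the one about $g$ by the time-reversal symmetry already used in Section~\ref{sec:hyperbolicity} (via the involution $\cI$), so that only the forward case needs real work. Fix $\omega$ and, as in the proof of Theorem~\ref{thm:pair_bound}, write $x'=\cF_n^{-1}(\omega,x)$ and $y'=\cF_n^{-1}(\omega,y)$ for a pair $x,y$ lying in a common element $W_{+,\alpha}^{(n)}(\omega)=W_{+,\alpha}(\sigma^n\omega)$ of $\cP_+^{(n)}(\omega)$. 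From $\cF_{n+k,1}=\cF_{n+k,n+1}\circ\cF_n$ and its inverse one gets, by cancellation, the two identities $\cF_j(\omega,x')=\cF_{n,j+1}^{-1}(\omega,x)$ for $0\le j\le n$ and $\cF_{n+k}(\omega,x')=\cF_k(\sigma^n\omega,x)$ for $k\ge 0$, and likewise with $y$ in place of $x$.

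The plan is then to upgrade the inequality $s_+(\omega;x',y')\ge n$ already observed in the proof of Theorem~\ref{thm:pair_bound} to
\beqn
s_+(\omega;x',y') \ \ge\ n + s_+(\sigma^n\omega;x,y).
\eeqn
Since $\cP_+^{(j+1)}(\omega)$ refines $F_{\omega_j}(\cP_+^{(j)}(\omega))$ for every $j$, the set $\cF_{n,j+1}^{-1}(\omega,W_{+,\alpha}^{(n)}(\omega))$ is contained in a single element of $\cP_+^{(j)}(\omega)$ for each $0\le j\le n$ --- in particular in one connected homogeneous curve, hence in one homogeneity strip of one component of $\cM^*$ --- and clause~(iii) in the definition of homogeneous local unstable manifolds (and in the definition of the refinements $\cP_\pm^{(\cdot)}$) gives the same conclusion for the intermediate white-disk images $F_{\omega_j}^*\!\bigl(\cF_{n,j+1}^{-1}(\omega,\slot)\bigr)$. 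Combined with the two identities above, this says exactly that $x'$ and $y'$ do not separate during the first $n$ steps, while from step $n$ onwards their orbits (and intermediate white-disk points) coincide with those of $x,y$ run under $\sigma^n\omega$; the displayed inequality follows at once.

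Granting this, the first estimate is immediate: $x'$ and $y'$ lie on a common homogeneous local unstable manifold of $\omega$, so for $f\in\cH_+(\omega;\vartheta)$ and $\vartheta\in(0,1)$,
\beqn
|f(x')-f(y')| \ \le\ K_+(\omega;\vartheta,f)\,\vartheta^{\,s_+(\omega;x',y')} \ \le\ \bigl(K_+(\omega;\vartheta,f)\,\vartheta^{\,n}\bigr)\,\vartheta^{\,s_+(\sigma^n\omega;x,y)},
\eeqn
which is precisely $f\circ\cF_n^{-1}(\omega,\slot)\in\cH_+(\sigma^n\omega;\vartheta)$ with the asserted bound on $K_+$. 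For $g$ one runs the mirror argument on the backward dynamics: $\cP_-^{(j)}(\omega)$ refines $F_{\omega_j}^{-1}(\cP_-^{(j+1)}(\omega))$, hence $\cF_n(\omega,\slot)$ carries an element of $\cP_-(\omega)$ into a single element of $\cP_-(\sigma^n\omega)$, and the backward analogue of the separation estimate reads $s_-\!\bigl(\sigma^n\omega;\cF_n(\omega,x),\cF_n(\omega,y)\bigr)\ge n+s_-(\omega;x,y)$ for $x,y$ in a common element of $\cP_-(\omega)$; feeding this into the defining inequality for $\cH_-(\sigma^n\omega;\vartheta)$ gives $g\circ\cF_n(\omega,\slot)\in\cH_-(\omega;\vartheta)$ with $K_-(\omega;\vartheta,g\circ\cF_n(\omega,\slot))\le K_-(\sigma^n\omega;\vartheta,g)\,\vartheta^{\,n}$. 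The only genuinely delicate point is the separation-time inequality, and within it the bookkeeping of the intermediate $F^*$-checks in the definition of $s_\pm$: one must verify that the white-disk homogeneity-strip condition is inherited along the orbit, which is exactly the content of clause~(iii) of the definitions of the local (un)stable manifolds and of the refinements $\cP_\pm^{(\cdot)}$. Everything else is the cancellation identities recorded above.
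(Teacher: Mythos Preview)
Your proof is correct and follows essentially the same route as the paper's: pick $x,y$ in a common element of $\cP_+^{(n)}(\omega)$, observe that their preimages lie in a common element of $\cP_+(\omega)$, establish the separation-time shift $s_+(\omega;x',y')\ge n+s_+(\sigma^n\omega;x,y)$, and feed this into the defining inequality for $\cH_+$; then run the mirror argument for $g$. The paper is terser --- it simply asserts the separation-time identity (in fact as an equality rather than your inequality, though only the inequality is needed) without spelling out the $F^*$/clause-(iii) bookkeeping you carefully track --- but the argument is the same.
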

\begin{proof}
Suppose $x,y\in W_{-,\alpha}$ for some $\alpha\in\cA_-$. Then $\cF_n(\omega,x),\cF_n(\omega,y)\in W_{-,\beta}^{(n)}$ for some $\beta\in\cA_-^{(n)}$, which in particular means that $s_-(\sigma^n\omega;\cF_n(\omega,x),\cF_n(\omega,y)) = n + s_-(\omega;x,y)$. In other words, for $g\in\cH_-(\sigma^n\omega;\vartheta)$,
\beqn
|g(\cF_n(\omega,x)) - g(\cF_n(\omega,y))| \leq K_-(\sigma^n\omega;\vartheta,g) \,\vartheta^n \,\vartheta^{s_-(\omega;x,y)}.
\eeqn
Since $x,y$ and $\alpha$ were arbitrary, the claim for $g\circ\cF_n(\omega,\slot)$ follows. 

The proof for $f\circ\cF_n^{-1}(\omega,\slot)$ is very similar. Suppose $x,y\in W_{+,\alpha}^{(n)}$ for some $\alpha\in\cA_+^{(n)}$. Then $\cF_n^{-1}(\omega,x),\cF_n^{-1}(\omega,y)\in W_{+,\beta}$ for some $\beta\in\cA_+$, which implies $s_+(\omega;\cF_n^{-1}(\omega,x),\cF_n^{-1}(\omega,y)) = n + s_+(\sigma^n\omega;x,y)$. Thus, for $f\in\cH_+(\omega;\vartheta)$,
\beqn
|f(\cF_n^{-1}(\omega,x)) - f(\cF_n^{-1}(\omega,y))| \leq K_+(\omega;\vartheta,f) \,\vartheta^n \,\vartheta^{s_+(\sigma^n\omega;x,y)}.
\eeqn
The lemma now follows.
\end{proof}
As a corollary, we get the following result.
\begin{lem}\label{lem:products_regular}
Let $\omega\in\Omega$, $n\geq 0$, and $\vartheta_0,\dots\vartheta_n\in(0,1)$ be fixed. We set $\vartheta = \max_{0\leq i\leq n}\vartheta_i$. If $f_i\in\cH_+(\sigma^i\omega;\vartheta_i)$ and $\|f_i\|_\infty<\infty$ for $0\leq i\leq n$, then the function
\beqn
f = \prod_{i=0}^n f_i\circ \cF^{-1}_{n,{i+1}}(\omega,\slot)
\eeqn
belongs to $\cH_+(\sigma^n\omega;\vartheta)$ with
\beqn
K_+(\sigma^n\omega;\vartheta,f) \leq  \sum_{i=0}^n \left(\prod_{j\neq i}\|f_j\|_\infty \right) K_+(\sigma^i\omega;\vartheta_i,f_i)\,\vartheta_i^{n-i}.
\eeqn
Similarly, if $g_i\in\cH_-(\sigma^i\omega;\vartheta_i)$ and $\|g_i\|_\infty<\infty$ for $0\leq i\leq n$, then the function
\beqn
g = \prod_{i=0}^n g_i\circ\cF_i(\omega,\slot)
\eeqn
belongs to $\cH_-(\omega;\vartheta)$ with
\beqn
K_-(\omega;\vartheta,g) = \sum_{i=0}^n \left(\prod_{j\neq i}\|g_j\|_\infty \right) K_-(\sigma^i\omega;\vartheta_i,g_i)\,\vartheta_i^{i}
\eeqn
\end{lem}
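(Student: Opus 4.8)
The plan is to reduce the lemma to two elementary facts about the classes $\cH_\pm$ together with Lemma~\ref{lem:H_shift}, which already describes how the seminorm $K_\pm$ transforms when a function is pushed through a composition of billiard maps. First I would record two observations, stated for $\cH_+(\omega;\vartheta)$ and valid verbatim for $\cH_-$. \emph{Monotonicity in the rate:} if $\vartheta'\le\vartheta$ and $h\in\cH_+(\omega;\vartheta')$, then $h\in\cH_+(\omega;\vartheta)$ with $K_+(\omega;\vartheta,h)\le K_+(\omega;\vartheta',h)$, since $(\vartheta')^{s}\le\vartheta^{s}$ for every integer $s\ge0$ and in particular for $s=s_+(\omega;x,y)$. \emph{Product rule:} if $h,h'\in\cH_+(\omega;\vartheta)$ are bounded, then $hh'\in\cH_+(\omega;\vartheta)$ with $K_+(\omega;\vartheta,hh')\le\|h\|_\infty K_+(\omega;\vartheta,h')+\|h'\|_\infty K_+(\omega;\vartheta,h)$; this follows from the pointwise identity $h(x)h'(x)-h(y)h'(y)=h(x)\bigl(h'(x)-h'(y)\bigr)+h'(y)\bigl(h(x)-h(y)\bigr)$ applied for $x,y$ in a common $W_{+,\alpha}$. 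A trivial induction then gives, for bounded $h_0,\dots,h_n\in\cH_+(\omega;\vartheta)$,
\[
K_+\Bigl(\omega;\vartheta,\prod_{i=0}^n h_i\Bigr)\le\sum_{i=0}^n\Bigl(\prod_{j\ne i}\|h_j\|_\infty\Bigr)K_+(\omega;\vartheta,h_i).
\]

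Next I would unwind the factors of $f$. By the composition conventions in Section~\ref{sec:proof_defs}, $\cF_{n,i+1}^{-1}(\omega,\slot)=\cF_{n-i}^{-1}(\sigma^i\omega,\slot)$; this is a bijection of $\cM$ outside a $\mu$-null exceptional set, so $\|f_i\circ\cF_{n,i+1}^{-1}(\omega,\slot)\|_\infty=\|f_i\|_\infty$. For $0\le i<n$, Lemma~\ref{lem:H_shift} with $\sigma^i\omega$ in place of $\omega$ and $n-i\ge1$ in place of $n$ yields $f_i\circ\cF_{n-i}^{-1}(\sigma^i\omega,\slot)\in\cH_+(\sigma^n\omega;\vartheta_i)$ with seminorm at most $K_+(\sigma^i\omega;\vartheta_i,f_i)\,\vartheta_i^{\,n-i}$; for $i=n$ the map is the identity and the factor is $f_n\in\cH_+(\sigma^n\omega;\vartheta_n)$, which is the same bound with the convention $\vartheta_n^{0}=1$. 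Applying the monotonicity observation to replace each $\vartheta_i$ by $\vartheta$ without increasing the seminorm, and then the iterated product rule at the base point $\sigma^n\omega$, gives exactly the claimed estimate for $K_+(\sigma^n\omega;\vartheta,f)$. The stable half is symmetric: $\cF_i(\omega,\slot)$ is again a bijection up to a null set so $\|g_i\circ\cF_i(\omega,\slot)\|_\infty=\|g_i\|_\infty$, Lemma~\ref{lem:H_shift} gives $g_i\circ\cF_i(\omega,\slot)\in\cH_-(\omega;\vartheta_i)$ with seminorm at most $K_-(\sigma^i\omega;\vartheta_i,g_i)\,\vartheta_i^{\,i}$ (the case $i=0$ being trivial since $\cF_0=\mathrm{id}_\cM$), and the same two reductions conclude.

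I do not expect a genuine obstacle here, as this is a bookkeeping corollary of Lemma~\ref{lem:H_shift}. The only points requiring attention are the index translation $\cF_{n,i+1}^{-1}(\omega,\slot)=\cF_{n-i}^{-1}(\sigma^i\omega,\slot)$ (so that Lemma~\ref{lem:H_shift} is invoked at base point $\sigma^i\omega$ and the output correctly lands in $\cH_+(\sigma^n\omega;\cdot)$), the degenerate endpoints ($n-i=0$ in the unstable product, $i=0$ in the stable product, and the whole statement when $n=0$), and the harmless remark that pre-composition with $\cF^{\pm1}$ does not change the sup-norm because the billiard compositions are bijections away from $\mu$-null singularity sets.
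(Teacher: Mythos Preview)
Your proof is correct and follows essentially the same route as the paper: the paper directly telescopes $f(x)-f(y)$ over the factors, invokes Lemma~\ref{lem:H_shift} (via the identity $\cF_{n,i+1}^{-1}(\omega,\slot)=\cF_{n-i}^{-1}(\sigma^i\omega,\slot)$) to bound each term, and implicitly uses $\vartheta_i^{s_+}\le\vartheta^{s_+}$, which is exactly your monotonicity step. Your version is slightly more modular in abstracting the product rule and monotonicity as preliminary observations, but the argument is the same.
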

\begin{proof}
Consider some $\alpha\in\cA_+^{(n)}$ and two points $x,y\in W_{+,\alpha}^{(n)}$. Notice that $f(x)-f(y)$ equals
\beqn
\begin{split}
\sum_{i=0}^n \left(\prod_{j=0}^{i-1}f_j\bigl(\cF_{n,j+1}^{-1}(\omega,x)\bigr)\right)
 \left[ f_i\bigl(\cF_{n,i+1}^{-1}(\omega,x)\bigr)- f_i\bigl(\cF_{n,i+1}^{-1}(\omega,y)\bigr) \right]
 \left(\prod_{j=i+1}^n f_j\bigl(\cF_{n,j+1}^{-1}(\omega,y)\bigr)\right).
\end{split}
\eeqn
Because $\cF^{-1}_{n,i+1}(\omega,\slot) = \cF_{n-i}^{-1}(\sigma^i\omega,\slot)$, Lemma~\ref{lem:H_shift} yields $f_i\circ\cF_{n,i+1}^{-1}(\omega,\slot)\in \cH_+(\sigma^n\omega;\vartheta_i)$ with
\beqn
K_+(\sigma^n\omega;\vartheta_i,f_i\circ\cF_{n,i+1}^{-1}(\omega,\slot))\leq K_+(\sigma^i\omega;\vartheta_i,f_i) \,\vartheta_i^{n-i}.
\eeqn
Therefore,
\beqn
\begin{split}
|f(x)-f(y)| 
&\leq \sum_{i=0}^n \left(\prod_{j\neq i}\|f_j\|_\infty \right) K_+(\sigma^i\omega;\vartheta_i,f_i) \,\vartheta_i^{n-i} \, \vartheta_i^{s_+(\sigma^n\omega;x,y)}.
\end{split}
\eeqn
The proof of the other part is similar, and we omit it.
\end{proof}

We are now ready to turn to bounding multiple correlation functions. The following theorem gives a rather general and explicit bound. We record it for completeness, as it is interesting in its own right. A special case (Corollary~\ref{cor:multiple_bound}) admitting a much tidier expression for the upper bound will be sufficient for the purpose of proving the main result of the paper.
\begin{thm}\label{thm:multiple_bound}
For a sufficiently small $\ve>0$, there exist uniform constants $C_\mathrm{p}>0$, $C>0$, $\zeta>0$, and $\chi>0$ such that the following holds.
Let $\omega\in\Omega_\mathrm{sat}$, $n\geq 0$, $m\geq 0$, $k\geq 0$, $\vartheta_{+,0}\dots,\vartheta_{+,n}\in (0,1)$ and $\vartheta_{-,0}\dots,\vartheta_{-,m}\in (0,1)$ all be fixed. We set $\vartheta_+ = \max_{0\leq i\leq n}\vartheta_{+,i}$ and $\vartheta_- = \max_{0\leq i\leq m}\vartheta_{-,i}$. Assume that $f_i\in\cH_+(\sigma^i\omega;\vartheta_{+,i})$ and $\|f_i\|_\infty<\infty$ for $0\leq i\leq n$, as well as $g_{n+k+i}\in\cH_-(\sigma^{n+k+i}\omega;\vartheta_{-,i})$ and $\|g_{n+k+i}\|_\infty<\infty$ for $0\leq i\leq m$.
Denote
\beq\label{eq:FG_def}
F = \prod_{i=0}^n f_i\circ\cF_i(\omega,\slot)
\quad\text{and}\quad
G = \prod_{i=0}^m g_{n+k+i}\circ\cF_{n+k+i}(\omega,\slot).
\eeq
Then
\beqn
\begin{split}
&\left|\int FG\,\rd\mu-\int F\,\rd\mu\int G\,\rd\mu\right| 
\leq
\\
& \qquad \left\{C\prod_{i=0}^m\|g_{n+k+i}\|_\infty + \left( \sum_{i=0}^n \left(\prod_{j\neq i}\|g_{n+k+j}\|_\infty \right) K_-(\sigma^{n+k+i}\omega;\vartheta_{-,i}\,,g_{n+k+i})\,\vartheta_{-,i}^{i}\right)\,\vartheta_-^{-1} \right\}
\\
&\qquad\qquad\quad\cdot\left(\prod_{i=0}^n\|f_i\|_\infty\right) \left(\max(\zeta,\vartheta_-^{1/2})\right)^{k/4} 
%
%
\quad +  \quad 2C_\mathrm{p}\left(\prod_{i=0}^n\|f_i\|_\infty\right) \left(\prod_{i=0}^m\|g_{n+k+i}\|_\infty\right) e^{-k/4\chi}
\\
& \qquad+ 2\left(\sum_{i=0}^n \left(\prod_{j\neq i}\|f_j\|_\infty \right) K_+(\sigma^i\omega;\vartheta_{+,i}\,,f_i)\,\vartheta_{+,i}^{n-i}\right)\left(\prod_{i=0}^m\|g_{n+k+i}\|_\infty\right) \vartheta_+^{k/2-1}.
\end{split}
\eeqn
\end{thm}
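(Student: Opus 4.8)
The plan is to collapse the multiple correlation into a single \emph{pair} correlation of the type controlled by Theorem~\ref{thm:pair_bound}, by absorbing the dynamics up to time $n$ into a measure-preserving change of variables and then recognizing the two resulting factors as dynamically H\"older functions via Lemma~\ref{lem:products_regular}. Since all the analytic work is already hidden in Theorem~\ref{thm:pair_bound} and Lemma~\ref{lem:products_regular}, the proof amounts to a careful bookkeeping reduction.

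First I would change variables. Every admissible $F_\bfc$ preserves $\mu$, hence so do $\cF_n(\omega,\slot)$ and $\cF_n^{-1}(\omega,\slot)$; substituting $x\mapsto\cF_n^{-1}(\omega,x)$ in $\int FG\,\rd\mu$ turns the quantity to be bounded into $\int \tilde F\cdot\hat G\,\rd\mu - \int\tilde F\,\rd\mu\int\hat G\,\rd\mu$, where $\tilde F\defas F\circ\cF_n^{-1}(\omega,\slot)$ and $\hat G\defas G\circ\cF_n^{-1}(\omega,\slot)$. I would then simplify these factors using the cocycle identities $\cF_i(\omega,\slot)\circ\cF_n^{-1}(\omega,\slot)=\cF_{n,i+1}^{-1}(\omega,\slot)$ for $0\le i\le n$ and $\cF_{n+k+i}(\omega,\slot)\circ\cF_n^{-1}(\omega,\slot)=\cF_{k+i}(\sigma^n\omega,\slot)$ for $0\le i\le m$, both immediate from $\cF_n=\cF_{n,i+1}\circ\cF_i$ and $\cF_{n+j,n+1}(\omega,\slot)=\cF_j(\sigma^n\omega,\slot)$. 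Plugging these into~\eqref{eq:FG_def} gives
\beqn
\tilde F=\prod_{i=0}^n f_i\circ\cF_{n,i+1}^{-1}(\omega,\slot),
\qquad
\hat G=\tilde G\circ\cF_k(\sigma^n\omega,\slot),
\quad\text{with}\quad
\tilde G\defas\prod_{i=0}^m g_{n+k+i}\circ\cF_i(\sigma^{n+k}\omega,\slot).
\eeqn
So the target becomes precisely the pair correlation of $\tilde F$ and $\tilde G\circ\cF_k(\sigma^n\omega,\slot)$, with base sequence $\sigma^n\omega$ (which lies in $\Omega_\mathrm{sat}$ because $\sigma(\Omega_\mathrm{sat})\subset\Omega_\mathrm{sat}$) and time gap $k$.

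Next I would read off the regularity of the two factors from Lemma~\ref{lem:products_regular}: $\tilde F$ is exactly the product appearing in its $\cH_+$ statement with base sequence $\omega$, so $\tilde F\in\cH_+(\sigma^n\omega;\vartheta_+)$ with $K_+(\sigma^n\omega;\vartheta_+,\tilde F)\le\sum_{i=0}^n\bigl(\prod_{j\ne i}\|f_j\|_\infty\bigr)K_+(\sigma^i\omega;\vartheta_{+,i},f_i)\,\vartheta_{+,i}^{n-i}$, while $\tilde G$ is the product in its $\cH_-$ statement with base sequence $\sigma^{n+k}\omega$ (for which the hypothesis $g_{n+k+i}\in\cH_-(\sigma^{n+k+i}\omega;\vartheta_{-,i})$ is exactly what is needed), so $\tilde G\in\cH_-(\sigma^{n+k}\omega;\vartheta_-)$ with $K_-(\sigma^{n+k}\omega;\vartheta_-,\tilde G)\le\sum_{i=0}^m\bigl(\prod_{j\ne i}\|g_{n+k+j}\|_\infty\bigr)K_-(\sigma^{n+k+i}\omega;\vartheta_{-,i},g_{n+k+i})\,\vartheta_{-,i}^i$; also $\|\tilde F\|_\infty\le\prod_{i=0}^n\|f_i\|_\infty$ and $\|\tilde G\|_\infty\le\prod_{i=0}^m\|g_{n+k+i}\|_\infty$ since precomposition by a bijection preserves the sup-norm. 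Finally I would feed $\tilde F$ and $\tilde G$ into Theorem~\ref{thm:pair_bound} (with base sequence $\sigma^n\omega$ and with its ``$n$'' equal to the present $k$); substituting the bounds just listed, and recalling the form of $B_-$ and that $\theta(\vartheta_-)=\max(\zeta,\vartheta_-^{1/2})$, its three right-hand-side terms become, in order, the three groups of terms in the claimed inequality.

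The main obstacle is not analytic --- everything quantitative is already packaged in Theorem~\ref{thm:pair_bound} (hence Lemmas~\ref{lem:memory_loss} and~\ref{lem:proper_tail}) and in Lemma~\ref{lem:products_regular}. The one point demanding genuine care is the bookkeeping in the reduction above: verifying the two cocycle identities, keeping track of the left shifts, and, when invoking Lemma~\ref{lem:products_regular} and Theorem~\ref{thm:pair_bound}, matching the base sequences ($\omega$ for $\tilde F$, $\sigma^{n+k}\omega$ for $\tilde G$, $\sigma^n\omega$ for the pair estimate) and the rate parameters $\vartheta_{\pm,i}$ with the correct indices.
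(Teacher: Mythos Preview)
Your proposal is correct and follows essentially the same approach as the paper: both reduce to a pair correlation by pulling back through $\cF_n^{-1}$, identify the resulting factors via Lemma~\ref{lem:products_regular} as elements of $\cH_+(\sigma^n\omega;\vartheta_+)$ and $\cH_-(\sigma^{n+k}\omega;\vartheta_-)$, and then apply Theorem~\ref{thm:pair_bound} with base sequence $\sigma^n\omega$ and gap $k$. Your $\tilde F$ and $\tilde G$ are exactly the paper's $f$ and $g$, and you even make explicit the point (left implicit in the paper) that $\sigma^n\omega\in\Omega_\mathrm{sat}$.
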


\begin{proof}
Notice the simple identity
\beqn
\cF_i(\omega,\cF_n^{-1}(\omega,\slot)) = \cF^{-1}_{n,i+1}(\omega,\slot), \quad 0\leq i\leq n.
\eeqn
Let us then define the functions
\beqn
f \equiv F\circ\cF_n^{-1}(\omega,\slot) =  \prod_{i=0}^n f_i\circ\cF_{n,i+1}^{-1}(\omega,\slot)
\eeqn
and
\beqn
g \equiv G\circ\cF_{n+k}^{-1}(\omega,\slot) = \prod_{i=0}^m g_{n+k+i}\circ\cF_{n+k+i,n+k+1}(\omega,\slot).
\eeqn
By invariance of the measure $\mu$,
\beqn
\begin{split}
\int F G\,\rd\mu 
& = \int F(\cF_n^{-1}(\omega,x)) \,G(\cF_n^{-1}(\omega,x)) \,\rd\mu(x)
\\
& = \int F(\cF_n^{-1}(\omega,x)) \,G(\cF_{n+k}^{-1}(\omega,\cF_{n+k,n+1}(\omega,x))) \,\rd\mu(x)
\\
& 
= \int f(x) \,g(\cF_{n+k,n+1}(\omega,x)) \,\rd\mu(x).
\end{split}
\eeqn
For the same reason,
\beqn
\int F \,\rd\mu = \int f \,\rd\mu
\quad\text{and}\quad
\int G \,\rd\mu = \int g \,\rd\mu,
\eeqn
so that
\beqn
\int F G\,\rd\mu - \int F\,\rd\mu\int G\,\rd\mu = \int f \cdot g\circ \cF_{n+k,n+1}(\omega,\slot)\,\rd\mu - \int f\,\rd\mu\int g\,\rd\mu.
\eeqn

Observing that $\cF_{n+k,n+1}(\omega;\slot) = \cF_{k}(\sigma^n\omega;\slot)$, we wish to apply Theorem~\ref{thm:pair_bound} next, and thus bound the right side of the above expression. To do so, we need to prove that $f\in\cH_+(\sigma^n\omega;\vartheta_+)$ and $g\in\cH_-(\sigma^k(\sigma^n\omega);\vartheta_-)$. By assumption, $f_i\in\cH_+(\sigma^i\omega;\vartheta_{+,i})$ for $0\leq i\leq n$, and the first claim follows immediately from Lemma~\ref{lem:products_regular} with
\beq\label{eq:K_+_temp}
K_+(\sigma^n\omega;\vartheta_+,f) \leq  \sum_{i=0}^n \left(\prod_{j\neq i}\|f_j\|_\infty \right) K_+(\sigma^i\omega;\vartheta_{+,i}\,,f_i)\,\vartheta_{+,i}^{n-i}.
\eeq
On the other hand, $g = \prod_{i=0}^m g_{n+k+i}\circ\cF_{i}(\sigma^{n+k}\omega,\slot)$. By the assumption of the theorem, $g_{n+k+i}\in\cH_-(\sigma^i(\sigma^{n+k}\omega);\vartheta_{-,i})$ for $0\leq i\leq m$. Thus, according to Lemma~\ref{lem:products_regular}, the function~$g$ is indeed in $\cH_-(\sigma^{k}(\sigma^n\omega);\vartheta_-)$ with
\beq\label{eq:K_-_temp}
K_-(\sigma^{k}(\sigma^n\omega);\vartheta_-,g) \leq \sum_{i=0}^n \left(\prod_{j\neq i}\|g_{n+k+j}\|_\infty \right) K_-(\sigma^{n+k+i}\omega;\vartheta_{-,i}\,,g_{n+k+i})\,\vartheta_{-,i}^{i}.
\eeq
Now, Theorem~\ref{thm:pair_bound} yields the bound
\beqn
\begin{split}
& \left|\int FG\,\rd\mu-\int F\,\rd\mu\int G\,\rd\mu\right| = \left|\int f \cdot g\circ \cF_{k}(\sigma^n\omega;\slot)\,\rd\mu -  \int f\,\rd\mu \int g\,\rd\mu\right| 
\\
& \quad
\leq \| f\|_\infty B_-(\sigma^{n+k}\omega;\vartheta_-,g)\,(\theta(\vartheta_-))^{k/4} +  2C_\mathrm{p}\|f\|_\infty\|g\|_\infty\, e^{-k/4\chi}+ 2K_+(\sigma^n\omega;\vartheta_+,f)\|g\|_\infty\, \vartheta_+^{k/2-1}
\\
& \quad
\leq
\left(\prod_{i=0}^n\|f_i\|_\infty\right) \left\{C\prod_{i=0}^m\|g_{n+k+i}\|_\infty + K_-(\sigma^{n+k}\omega;\vartheta_-,g)\,\vartheta_-^{-1} \right\}(\theta(\vartheta_-))^{k/4} 
\\
& \qquad+  2C_\mathrm{p}\left(\prod_{i=0}^n\|f_i\|_\infty\right) \left(\prod_{i=0}^m\|g_{n+k+i}\|_\infty\right) e^{-k/4\chi}
+ 2K_+(\sigma^n\omega;\vartheta_+,f)\left(\prod_{i=0}^m\|g_{n+k+i}\|_\infty\right) \vartheta_+^{k/2-1}.
\end{split}
\eeqn
Inserting~\eqref{eq:K_+_temp},~\eqref{eq:K_-_temp}, and $\theta(\vartheta_-) = \max(\zeta,\vartheta_-^{1/2})\in(0,1)$ proves the theorem.
\end{proof}

Now, we specialize to the actual case of interest, which is related to proving convergence of the characteristic function of the vector-valued process in Theorem~\ref{thm:ASIP}. Thus, below, the functions $f_i$ and $g_{n+k+i}$ are going to be uniformly bounded by $1$, as well as have the same rate and the same constant in the dynamical H\"older continuity conditions, as follows.

\begin{cor}\label{cor:multiple_bound}
For a sufficiently small $\ve>0$, there exist uniform constants $C_\mathrm{p}>0$, $C>0$, $\zeta>0$, and $\chi>0$ such that the following holds.
Let $\omega\in\Omega_\mathrm{sat}$, $n\geq 0$, $m\geq 0$, $k\geq 0$, $\vartheta\in (0,1)$, and $K\geq 0$ be fixed. Assume that $f_i\in\cH_+(\sigma^i\omega;\vartheta)$ with $K_+(\sigma^i\omega;\vartheta,f_i)\leq K$ and $\|f_i\|_\infty\leq 1$ for $0\leq i\leq n$, as well as $g_{n+k+i}\in\cH_-(\sigma^{n+k+i}\omega;\vartheta)$ with $K_-(\sigma^{n+k+i}\omega;\vartheta,g_{n+k+i})\leq K$ and $\|g_{n+k+i}\|_\infty\leq 1$ for $0\leq i\leq m$. With $F$ and $G$ as in~\eqref{eq:FG_def},
we have the bound
\beqn
\left|\operatorname{Cov}_\mu{(F,G)}\right| = \left|\int FG\,\rd\mu-\int F\,\rd\mu\int G\,\rd\mu\right| \leq A\lambda^k,
\eeqn
where $A=C +  2C_\mathrm{p}+3K(1-\vartheta)^{-1}\vartheta^{-1}> 0$ and $\lambda= \left(\max(\zeta,\vartheta^{1/2},e^{-1/\chi})\right)^{1/4} \in(0,1)$.
\end{cor}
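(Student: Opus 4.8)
The plan is to read off Corollary~\ref{cor:multiple_bound} directly from Theorem~\ref{thm:multiple_bound}. First I would apply that theorem with $\vartheta_{+,i}=\vartheta$ for $0\le i\le n$ and $\vartheta_{-,i}=\vartheta$ for $0\le i\le m$, so that $\vartheta_+=\vartheta_-=\vartheta$; the functions $f_i$, $g_{n+k+i}$ and the products $F$, $G$ are the same as here. All that remains is to simplify the three terms on the resulting right-hand side under the present hypotheses $\|f_i\|_\infty\le 1$, $\|g_{n+k+i}\|_\infty\le 1$, $K_+(\sigma^i\omega;\vartheta,f_i)\le K$, and $K_-(\sigma^{n+k+i}\omega;\vartheta,g_{n+k+i})\le K$.

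In each term every product of sup-norms is at most $1$, so these factors may be dropped. In the bracket of the first term each $K_-$ is replaced by $K$ and the resulting geometric sum is bounded using $\sum_{i\ge 0}\vartheta^i=(1-\vartheta)^{-1}$, leaving $C+K(1-\vartheta)^{-1}\vartheta^{-1}$; similarly the $K_+$-sum in the third term is at most $K(1-\vartheta)^{-1}$, giving a prefactor $2K(1-\vartheta)^{-1}$. The three surviving exponential factors $(\max(\zeta,\vartheta^{1/2}))^{k/4}$, $e^{-k/4\chi}$ and $\vartheta^{k/2-1}=\vartheta^{-1}(\vartheta^{1/2})^{k}$ are each bounded by $\lambda^k$ (respectively $\vartheta^{-1}\lambda^k$ for the last one), where $\lambda=\bigl(\max(\zeta,\vartheta^{1/2},e^{-1/\chi})\bigr)^{1/4}$: indeed $\lambda^4$ dominates each of $\zeta$, $\vartheta^{1/2}$, $e^{-1/\chi}$, and $\lambda^4\le\lambda$ since $\lambda<1$. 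Adding the three estimates produces $\bigl(C+2C_\mathrm{p}+3K(1-\vartheta)^{-1}\vartheta^{-1}\bigr)\lambda^k=A\lambda^k$, the coefficient $3$ being $1$ from the first term plus $2$ from the third. Note also that $\lambda\in(0,1)$, because $\zeta,\vartheta\in(0,1)$ and $\chi>0$ force $\max(\zeta,\vartheta^{1/2},e^{-1/\chi})\in(0,1)$; the constants $C_\mathrm{p},C,\zeta,\chi$ are the uniform ones furnished by Theorem~\ref{thm:multiple_bound}.

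There is no real obstacle: the corollary is a bookkeeping consequence of Theorem~\ref{thm:multiple_bound}, and one could equivalently re-run the proof of that theorem (via Theorem~\ref{thm:pair_bound} and Lemma~\ref{lem:products_regular}) with the simplifications inserted at each stage. The only mild subtlety is reconciling the three distinct decay rates $k/4$, $k/4\chi$ and $k/2-1$ into a single clean $\lambda^k$, which is precisely why $\lambda$ is taken to be the fourth root of the maximum of $\zeta$, $\vartheta^{1/2}$ and $e^{-1/\chi}$.
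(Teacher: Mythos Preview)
Your proposal is correct and matches the paper's own proof essentially line for line: the paper too specializes Theorem~\ref{thm:multiple_bound} to the case $\vartheta_{\pm,i}=\vartheta$, drops the sup-norm products using $\|f_i\|_\infty,\|g_{n+k+i}\|_\infty\le 1$, bounds the geometric sums by $(1-\vartheta)^{-1}$, and then absorbs the three exponential factors into the single rate $\lambda^k$. Your accounting of the constant $A$ (the $1+2=3$ in front of $K(1-\vartheta)^{-1}\vartheta^{-1}$) is exactly what the paper obtains.
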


\begin{proof}
From Theorem~\ref{thm:multiple_bound}, we easily obtain
\beqn
\begin{split}
\left|\operatorname{Cov}_\mu{(F,G)}\right|
\leq  \left\{C +  K\sum_{i=0}^n  \vartheta^{i}\,\vartheta^{-1} \right\} \left(\max(\zeta,\vartheta^{1/2})\right)^{k/4} 
+  2C_\mathrm{p} e^{-k/4\chi}
+ 2K\sum_{i=0}^n  \vartheta^{n-i} \vartheta^{k/2-1}.
\end{split}
\eeqn
The result follows from this, because $\sum_{i=0}^n\vartheta^i = \sum_{i=0}^n  \vartheta^{n-i}\leq (1-\vartheta)^{-1}$.
\end{proof}

Notice that the above multiple correlation bounds were deduced directly from the pair correlation bound in Theorem~\ref{thm:pair_bound}.


\subsection{Characteristic function bounds}\label{sec:ASIP_proof}
Let us emphasize that our pair and multiple correlation bounds hold for \emph{all} sequences $\omega\in\Omega_\mathrm{sat}$, without any statistics required on the sequences. We will show below that an application of the bounds on mixing, asymptotically mean stationary, random sequences yields the vector-valued almost sure invariance principle.

The next theorem follows when \cite{Gouezel_2010} is applied to the special case of bounded processes. 
\begin{thm}[Gou\"ezel \cite{Gouezel_2010}]\label{thm:Gouezel}
Let $d$ be a positive integer and consider a sequence $(\bfA_n)_{n\geq 0}$ of $\bR^d$-valued random variables which is centered and bounded. Given integers $n>0$, $m>0$, $0\leq b_1<b_2<\dots<b_{n+m+1}$, $k\geq 0$, and vectors $\bft_1,\dots,\bft_{n+m}\in\bR^d$, set
\beq\label{eq:X_array}
X_{n,m}^{(k)} = \sum_{j=n}^m \bft_j\cdot\sum_{i=b_j+k}^{b_{j+1}-1+k}\bfA_{i}
\eeq
for brevity.  Now, suppose there exist constants $t>0$, $C>0$, and $c>0$ such that
\beq\label{eq:Gouezel}
\left|\E\!\left(e^{\imag X_{1,n}^{(0)}+\imag X_{n+1,n+m}^{(k)}}\right)-\E\!\left(e^{\imag X_{1,n}^{(0)}}\right)\! \E\!\left(e^{\imag X_{n+1,n+m}^{(k)}}\right)\right|
\leq Ce^{-ck}\!\left(1+\max_{1\leq j\leq n+m}|b_{j+1}-b_j|\right)^{C(n+m)}
\eeq
holds for all choices of the numbers $n$, $m$, $b_j$, $k>0$, and all vectors $\bft_j$ satisfying $|\bft_j|<t$.
\medskip
\\
{\bf(I)} If $(\bfA_n)_{n\geq 0}$ is stationary, there exists such a symmetric, semi-positive-definite, $d\times d$ matrix~$\bfSigma^2$ that the statements (2)--(4) listed as consequences in Theorem~\ref{thm:ASIP} hold true.
\medskip
\\
\noindent{\bf(II)} If $(\bfA_n)_{n\geq 0}$ is non-stationary, and if there exists such a $d\times d$ matrix~$\bfSigma^2$ that, for any $\alpha>0$,
\beq\label{eq:linear_covariance}
\sup_{\ell\geq 0,n\geq 1}\, n^{-\alpha}\left\|\rE\!\left(\sum_{k=\ell}^{\ell+n-1}\bfA_k \otimes \sum_{k=\ell}^{\ell+n-1}\bfA_k\right) - n\bfSigma^2\right\| <\infty
\eeq
is satisfied, then items (3) and (4) in Theorem~\ref{thm:ASIP} hold true. (Here $\|\slot\|$ denotes matrix norm.)
\end{thm}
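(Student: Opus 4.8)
The plan is to obtain Theorem~\ref{thm:Gouezel} as a direct specialization of the main result of \cite{Gouezel_2010}. That paper establishes a vector-valued almost sure invariance principle for a sequence of random variables under two kinds of hypotheses: moment/growth conditions on the variables, and a decoupling estimate on the characteristic function of separated block sums, of precisely the shape of~\eqref{eq:Gouezel}. The first step is to quote that theorem with the notation aligned to ours, identifying the family $X_{n,m}^{(k)}$ in~\eqref{eq:X_array} with Gou\"ezel's block-sum functionals and our $\bfA_n$ with his $\bR^d$-valued summands.

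The second step is to dispose of the moment hypotheses. Since $(\bfA_n)_{n\geq 0}$ is bounded, say $\sup_n\|\bfA_n\|_\infty\leq M<\infty$, every integrability, truncation-error, or moment-growth condition used in \cite{Gouezel_2010} holds automatically and uniformly; this is exactly the simplification meant by ``the special case of bounded processes'', which reduces the hypotheses of \cite{Gouezel_2010} to the single estimate~\eqref{eq:Gouezel}. One also records here, for later reference, that for bounded variables the error exponent produced by \cite{Gouezel_2010} may be taken arbitrarily close to $\tfrac14$ from above, which is the origin of the restriction $\lambda>\tfrac14$ appearing in item~(4) of Theorem~\ref{thm:ASIP}.

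It then remains to read off the two cases. In the stationary case, the cited theorem applied under~\eqref{eq:Gouezel} produces a symmetric, semi-positive-definite matrix~$\bfSigma^2$ --- arising as the limit of $\tfrac1n\,\rE\!\left(\sum_{k=0}^{n-1}\bfA_k\otimes\sum_{k=0}^{n-1}\bfA_k\right)$ --- together with the Brownian approximation, which yields items~(2)--(4) of Theorem~\ref{thm:ASIP}, item~(3) being either a consequence of item~(4) or part of Gou\"ezel's conclusions directly. In the non-stationary case, \cite{Gouezel_2010} still provides the Brownian approximation and the averaged central limit theorem, but the limiting covariance must now be supplied externally; the condition it needs is precisely the asymptotically linear growth of the covariance of the partial sums, i.e.\ \eqref{eq:linear_covariance}. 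Granting~\eqref{eq:linear_covariance}, items~(3) and~(4) follow.

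I expect the only delicate point to be the exact matching of hypotheses: one must confirm that~\eqref{eq:Gouezel} is literally the decoupling condition of \cite{Gouezel_2010} --- including the role of the polynomial factor $\bigl(1+\max_{j}|b_{j+1}-b_j|\bigr)^{C(n+m)}$, the quantifier over test vectors $\bft_j$ with $|\bft_j|<t$, and the fact that only strictly positive gaps $k$ are relevant --- and that Gou\"ezel's normalization of the covariance in the non-stationary conclusion is consistent with the one in~\eqref{eq:linear_covariance}. No genuinely new probabilistic argument is needed beyond this bookkeeping, so the proof should be kept short, with the substantial blocking construction underlying it left to \cite{Gouezel_2010}.
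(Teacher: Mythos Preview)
Your proposal is correct and matches the paper's own treatment: the paper does not give a proof of this statement at all, but simply records it as a direct citation, writing ``The next theorem follows when \cite{Gouezel_2010} is applied to the special case of bounded processes.'' Your proposal is just a more detailed unpacking of that sentence, and the bookkeeping you identify (matching the decoupling condition, noting that boundedness trivializes the moment hypotheses, and supplying the covariance externally in the non-stationary case) is exactly right.
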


Denote $\Omega_+ = (B_\ve(\mathbf{0}))^\bN$ and let $\bP$ be a probability distribution on it, as in the statement of Theorem~\ref{thm:ASIP}. Notice that we can embed $\Omega_+$ in $\Omega_\mathrm{sat}$ canonically: Given a sequence $\omega\in\Omega_+$, we identify it with the sequence $\bar\omega\in\Omega_\mathrm{sat}$ given by $\bar\omega_i = \omega_i$ for all $i\geq 0$ and $\bar\omega_i = \omega_0$ for all $i< 0$. For this reason, we will not make an explicit distinction with one- and two-sided sequences in the following argument. (That is, $\omega\in\Omega_+$ is picked randomly according to $\bP$ and fixed, after which it is augmented to an element of $\Omega_\mathrm{sat}$, still denoted $\omega$, which is then used in the subsequent computations.)

In our application of Theorem~\ref{thm:Gouezel}, given $\omega\in\Omega_+$, we set
$
\bfA_n(\omega,x) = \bff(\omega_n,\cF_n(\omega,x))
$
and define $X_{n,m}^{(k)}(\omega,x)$ as in~\eqref{eq:X_array}. Notice that we can write
\beqn
F(\omega,x)=\exp\!\left(
 \imag X_{1,n}^{(0)}(\omega,x)\right) = \prod_{i=b_1}^{b_{n+1}-1} f_i(\omega_i,\cF_i (\omega,x)),
\eeqn
where
$
f_i(\omega_i,x) = \exp\!\left(
 \imag \bft_j\cdot\bff(\omega_i,x)
\right)
$
with $j\in\{1,\dots,n\}$ chosen so that $b_j\leq i<b_{j+1}$.
Similarly,
\beqn
G(\omega,x)=\exp\!\left(
\imag X_{n+1,n+m}^{(k)}(\omega,x)
\right)
= 
\prod_{i=b_{n+1}+k}^{b_{n+m+1}-1+k} g_{i}(\omega_i,\cF_{i}(\omega,x)),
\eeqn
where
$
g_i(\omega_i,x) = \exp\!\left(
 \imag \bft_j\cdot\bff(\omega_i,x)\right)
$
with $j\in\{n+1,\dots,n+m\}$ chosen so that $b_j+k\leq i<b_{j+1}+k$. Thus, our goal is to show that the covariance of $F$ and $G$ with respect to the measure~$\bP\otimes\mu$ has a uniform upper bound like the right side of \eqref{eq:Gouezel}. Notice already that
\beq\label{eq:cov_identity}
\operatorname{Cov}_{\bP\otimes\mu}{(F,G)} = \int \operatorname{Cov}_\mu{(F,G)}\,\rd\bP + \operatorname{Cov}_\bP\left(\int F\,\rd\mu\,,\int G\,\,\rd\mu\right).
\eeq
Both $(\omega,x)\mapsto\bff(\omega_n,x)$ and $(\omega,x)\mapsto\cF_n(\omega,x)$ are measurable for every $n\geq 0$. (The first one of the two is an assumption, and the second is a consequence of the continuity statement after Lemma~\ref{lem:map_cont}.) In particular, $(\omega,x)\mapsto F(\omega,x)$ and $(\omega,x)\mapsto G(\omega,x)$ are measurable and absolutely bounded by $1$. Thus, all the integrals (including the covariances) in~\eqref{eq:cov_identity} are defined.

Owing to the H\"older continuity assumption on $\bff$, Lemma~\ref{lem:Holder_dyn_Holder} shows that there exist constants $\vartheta\in(0,1)$ and $\bar K\geq 0$ such that, for every $i\geq 0$, every vector component of the function $\bff(\omega_i,\slot)$ belongs to $\cH_-(\sigma^i\omega;\vartheta)\cap\cH_+(\sigma^i\omega;\vartheta)$ and
\beqn
| \bff(\omega_i,x) - \bff(\omega_i, y) | \leq \bar K \vartheta^{s_{\pm}(\sigma^i\omega;x,y)} \qquad \forall\,x,y\in W_{\pm,\alpha}^{(i)}\quad\forall\,\alpha\in\cA^{(i)}_{\pm}.
\eeqn
(In each condition we either choose ``$+$'' everywhere or ``$-$'' everywhere.) Assuming $|\bft_j|<t$ for all $j\geq 1$, the bound $|e^{\imag a}-e^{\imag b}|\leq |a-b|$ for $a,b\in\bR$ implies
\beqn
|f_i(\omega_i,x) -f_i(\omega_i,y) |,|g_i(\omega_i,x) -g_i(\omega_i,y) | \leq 
t\, | \bff(\omega_i,x) - \bff(\omega_i, y) |.
\eeqn
We can thus apply the multiple correlation bound of Corollary~\ref{cor:multiple_bound}, defining $K = t\bar K$. The small formal difference that here the product in the expression of~$F$ starts from time~$b_1$ instead of~$0$ is abolished by regarding momentarily $f_i \equiv 1$ for each $i=0,\dots,b_1-1$. Consequently,
$
|\operatorname{Cov}_\mu{(F,G)}| 
\leq A\lambda^k,
$
with the uniform, non-random, constants $A$ and $\lambda$. In particular,
\beqn
\left|\int \operatorname{Cov}_\mu{(F,G)}\,\rd\bP\right|\leq A\lambda^k.
\eeqn

The maps $\bar F:\omega\mapsto \int F(\omega,\slot)\,\rd\mu$ and $\bar G:\omega\mapsto \int G(\omega,\slot)\,\rd\mu$ are measurable. What is more, $\bar F(\omega)$ only depends on $\omega_i$ with $b_1\leq i<b_{n+1}$, while $\bar G(\omega)$ only depends on $\omega_i$ with $b_{n+1}+k\leq i<b_{n+m+1}-1+k$. Hence, if the sequence $(\omega_i)_{i\geq 0}$ is random and $\fF_a^b$, with $0\leq a\leq b$ integers, stands for the sigma-algebra generated by~$(\omega_i)_{i=a}^b$, then  $\bar F(\omega)$ and $\bar G(\omega)$ are, respectively, $\fF_0^{b_{n+1}-1}$- and $\fF_{b_{n+1}+k}^\infty$-measurable complex-valued random variables. By the rho-mixing assumption~{\bf(A1)}, and the fact that $|\bar F|\leq 1$ and $|\bar G|\leq 1$, the identity in~\eqref{eq:rho} yields
\beqn
\bigl|\operatorname{Cov}_\bP{(\bar F,\bar G)}\bigr| \leq \rho(k) \leq Be^{-ck}.
\eeqn

By~\eqref{eq:cov_identity}, we conclude that, in the notation of Theorem~\ref{thm:Gouezel},
\beqn
\begin{split}
& \left|\operatorname{Cov}_{\bP\otimes\mu}{(F,G)}\right|\leq A\lambda^k + Be^{-ck}
\end{split}
\eeqn
holds uniformly for all choices of the numbers $n$, $m$, $b_j$, $k>0$, and of the vectors $\bft_j$ satisfying $|\bft_j|<t$. The obtained bound is obviously stronger than Gou\"ezel's condition~\eqref{eq:Gouezel}. Hence, the proof of Theorem~\ref{thm:ASIP} in the stationary case is complete.

\subsection{Convergence of covariance}\label{sec:proof_covariance}
In this section we establish uniform convergence of the covariance in the sense of~\eqref{eq:linear_covariance}, which is needed in the non-stationary case. We denote
\beqn
\bfS_{n,\ell} = \sum_{k=\ell}^{\ell+n-1}\bfA_k \quad\text{and}\quad \bfS_n = \bfS_{n,0}.
\eeqn

\begin{lem}\label{lem:linear_covariance}
The matrix~$\bfSigma^2$ in~\eqref{eq:covariance_formula} is well defined. It satisfies the bound in~\eqref{eq:linear_covariance}, for any $\alpha>0$.
\end{lem}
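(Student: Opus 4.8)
\textbf{Proof plan for Lemma~\ref{lem:linear_covariance}.}

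The strategy is to expand the bilinear form $\rE(\bfS_{n,\ell}\otimes\bfS_{n,\ell})$ into a sum over pairs of indices and to compare it, term by term, against $n\bfSigma^2$. Writing $\bfS_{n,\ell} = \sum_{k=\ell}^{\ell+n-1}\bfA_k$, bilinearity gives
\beqn
\rE(\bfS_{n,\ell}\otimes\bfS_{n,\ell}) = \sum_{k=\ell}^{\ell+n-1}\rE(\bfA_k\otimes\bfA_k) + \sum_{m=1}^{n-1}\sum_{k=\ell}^{\ell+n-1-m}\rE\bigl(\bfA_k\otimes\bfA_{k+m} + \bfA_{k+m}\otimes\bfA_k\bigr).
\eeqn
The first step is to establish that each cross term $\rE(\bfA_k\otimes\bfA_{k+m})$ decays exponentially in $m$, uniformly in $k$ and $\ell$. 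This comes from Corollary~\ref{cor:multiple_bound} applied with $n=0$, $G$ a single factor, together with the decomposition~\eqref{eq:cov_identity}: the $\int\operatorname{Cov}_\mu\,\rd\bP$ part is bounded by $A\lambda^m$, while the $\operatorname{Cov}_\bP(\int\bfA_k\,\rd\mu\,,\int\bfA_{k+m}\,\rd\mu)$ part is controlled by $\rho(m)\leq Be^{-cm}$ using~\eqref{eq:rho} (after subtracting $\mu$-means, which vanish since $\int\bff\,\rd\mu = \mathbf 0$, so in fact this second covariance is simply handled by boundedness and rho-mixing). Thus $\|\rE(\bfA_k\otimes\bfA_{k+m})\|\leq A'\lambda'^{\,m}$ for uniform constants.

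The second step identifies the limit. One writes, for each fixed $m\geq 0$, the quantity $\langle\, \rE_\mu(\bfA_0\otimes\bfA_m)(\sigma^\ell\omega)\,\rangle$ in the Cesàro-average sense of~\eqref{eq:mean}, and invokes assumption~\textbf{(A2)}: applying~\eqref{eq:AMS_rate} to the bounded $\fF_0^{m}$-measurable function $\omega\mapsto\int(\bfA_0\otimes\bfA_m)(\omega,x)\,\rd\mu(x)$ (it is $\fF_0^m$-measurable because $\cF_i$ depends only on $\omega_0,\dots,\omega_{i-1}$), one gets that $\tfrac1k\sum_{j=0}^{k-1}\int(\bfA_0\otimes\bfA_m)(\sigma^{j}\omega,x)\,\rd\rP$ converges, as $k\to\infty$, to a matrix $\bfSigma^2_m$, with a convergence rate $|{\cdot}| \leq C_m r_k$. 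Summing over $m$ (legitimate by the step-one exponential bound, which dominates the $C_m$'s only after combining with~\eqref{eq:AMS_fast}) defines $\bfSigma^2 = \bfSigma^2_0 + \sum_{m\geq 1}\bigl(\bfSigma^2_m + (\bfSigma^2_m)^\rT\bigr)$, which is exactly~\eqref{eq:covariance_formula}; symmetry and semi-positive-definiteness will come from it being a limit of genuine covariance matrices $\tfrac1n\rE(\bfS_n\otimes\bfS_n)$, which is the content of item~(2) once the bound is proved, so I would prove the bound first and deduce those properties afterward.

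The third step is the quantitative comparison. Fix $\alpha>0$. I would split the double sum at $m = M_n \defas \lceil \beta^{-1}\log n\rceil$ for a suitable $\beta$ (chosen in terms of $\lambda'$ and $\alpha$): the tail $\sum_{m>M_n}$ contributes at most $n\sum_{m>M_n}A'\lambda'^{\,m} = O(n\cdot\lambda'^{\,M_n}) = O(n^{1-\text{const}})$, absorbed into $o(n^{\alpha+1})$ after dividing appropriately — actually it is $O(1)$ for $\beta$ small enough, hence harmless. For $1\leq m\leq M_n$, I replace $\sum_{k=\ell}^{\ell+n-1-m}\rE(\bfA_k\otimes\bfA_{k+m})$ by $n\bfSigma^2_m$ plus an error: the number of ``missing'' boundary terms is $m\leq M_n = O(\log n)$, each $O(1)$, giving $O(M_n)$ per $m$; and the deviation of the partial Cesàro average from its limit is bounded via~\eqref{eq:AMS_rate} by $n\cdot C_m r_{n}$ (more carefully, by $C_m r_{n-m}$-type quantities after accounting for the shift by $\ell$ and the truncation length $n-m$, which is where one needs~\eqref{eq:AMS_rate} to be uniform in $\ell$). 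Summing these errors over $1\leq m\leq M_n$ and using the exponential weight from step one to bound $C_m$ effectively, assumption~\eqref{eq:AMS_fast} yields a total of $O(D_\beta\log n)$, i.e. $o(n^\alpha)$ after dividing by $n^\alpha$ — in fact $O(\log n)$, comfortably inside the claim. The $m=0$ diagonal term is handled identically with a single application of~\eqref{eq:AMS_rate}.

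\textbf{Main obstacle.} The delicate point is the bookkeeping in step three: one must arrange the truncation level $M_n$, the exponential decay rate $\lambda'$ from Corollary~\ref{cor:multiple_bound}, and the constants $C_m, r_k, D_\beta$ from~\textbf{(A2)} so that~\eqref{eq:AMS_fast} applies with the right $\beta$, and so that the error coming from applying~\eqref{eq:AMS_rate} to the shifted, truncated averages (length $n-m$ rather than $n$, starting point $\ell$ rather than $0$) is genuinely covered — this is precisely why~\eqref{eq:AMS_rate} is stated uniformly in $\ell$ and why~\eqref{eq:AMS_fast} carries the $r_{n-m}$ rather than $r_n$. Keeping the $\fF_0^m$-measurability of the relevant functions straight (so that~\textbf{(A2)} is applicable at all) and checking that the series defining $\bfSigma^2$ converges absolutely in operator norm are the other points requiring care, but both follow routinely once the step-one exponential cross-term bound is in hand.
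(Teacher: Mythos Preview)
Your plan is essentially the paper's own argument: expand $\rE(\bfS_{n,\ell}\otimes\bfS_{n,\ell})$ into diagonal and off-diagonal parts, use exponential decay of the $m$-th off-diagonal term, truncate at $M_n\sim\beta^{-1}\log n$, and handle the main block via~\eqref{eq:AMS_rate}--\eqref{eq:AMS_fast}. The paper writes $\bfV_m(\omega)=\int(\bfA_0\otimes\bfA_m+\bfA_m\otimes\bfA_0)(\omega,x)\,\rd\mu(x)$ and arrives at the same $O(\log n)$ bound.

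There is, however, one imprecision worth flagging. In your step one you establish only the \emph{averaged} bound $\|\rE(\bfA_k\otimes\bfA_{k+m})\|\leq A'\lambda'^{\,m}$, but in step three you need the \emph{pointwise-in-$\omega$} bound $\sup_\omega\|\bfV_m(\omega)\|\leq Ce^{-\beta m}$: the tail estimate uses it to control $\langle\bfV_m\circ\sigma^\ell\rangle_{n-m}$ by $Ce^{-\beta m}$, and the application of~\eqref{eq:AMS_fast} requires it because the right side of~\eqref{eq:AMS_rate} carries $\|g_m\|_\infty=\|\bfV_m\|_\infty$, which has to supply the $e^{-\beta m}$ factor. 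The averaged bound does not give this. Fortunately the fix is immediate: Theorem~\ref{thm:pair_bound} (or Corollary~\ref{cor:multiple_bound}) already holds for every fixed $\omega\in\Omega_{\mathrm{sat}}$, so $\|\bfV_m\|_\infty\leq Ce^{-\beta m}$ is a direct consequence --- this is exactly how the paper proceeds (its~\eqref{eq:V_bound}). Just state the pointwise bound in step one rather than passing through the $\bP$-integral.

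A minor point: your invocation of rho-mixing in step one is superfluous. Since $\int\bfA_k(\omega,\cdot)\,\rd\mu=\mathbf 0$ identically (by $\mu$-invariance and the centering hypothesis on $\bff$), the second covariance in the decomposition~\eqref{eq:cov_identity} vanishes outright. The paper's proof of this lemma uses only the pair correlation bound and~\textbf{(A2)}; assumption~\textbf{(A1)} does not enter.
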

\begin{proof}
By $\mu$-invariance,
\beqn
\rE(\bfS_{n,\ell}\otimes \bfS_{n,\ell}) = \iint\bigl(\bfS_{n}\otimes \bfS_{n}\bigr)(\sigma^\ell\omega,x)\,\rd\mu(x)\,\rd\bP(\omega).
\eeqn
For any $\omega$,
\beqn
\begin{split}
& \int\bigl(\bfS_{n}\otimes \bfS_{n}\bigr)(\omega,\slot)\,\rd\mu 
\\
& \qquad\quad = \sum_{k=0}^{n-1} \int \bigl(\bfA_k\otimes\bfA_k\bigr)(\omega,\slot)\,\rd\mu + \sum_{0\leq j<k\leq n-1} \int \bigl(\bfA_j\otimes\bfA_k + \bfA_k\otimes\bfA_j\bigr)(\omega,\slot)\,\rd\mu
\\
& \qquad \quad =
\sum_{k=0}^{n-1} \int \bigl(\bfA_0\otimes\bfA_0\bigr)(\sigma^k\omega,\slot)\,\rd\mu + \sum_{0\leq j<k\leq n-1} \int \bigl(\bfA_0\otimes\bfA_{k-j} + \bfA_{k-j}\otimes\bfA_0\bigr)(\sigma^j\omega,\slot)\,\rd\mu
\\
& \qquad \quad =
\sum_{j=0}^{n-1} \int \bigl(\bfA_0\otimes\bfA_0\bigr)(\sigma^j\omega,\slot)\,\rd\mu + \sum_{m=1}^{n-1}\sum_{j=0}^{n-1-m} \int \bigl(\bfA_0\otimes\bfA_{m} + \bfA_{m}\otimes\bfA_0\bigr)(\sigma^j\omega,\slot)\,\rd\mu.
\end{split}
\eeqn
Denoting
\beqn
\bfV_0(\omega) = \int \bigl(\bfA_0\otimes\bfA_0\bigr)(\omega,x)\,\rd\mu(x)
\eeqn
and
\beqn
\bfV_m(\omega) = \int \bigl(\bfA_0\otimes\bfA_{m} + \bfA_{m}\otimes\bfA_0\bigr)(\omega,x)\,\rd\mu(x), \quad m\geq 1,
\eeqn
as well as using the notation introduced in~\eqref{eq:mean},
we thus have
\beqn
\begin{split}
 \rE(\bfS_{n,\ell}\otimes \bfS_{n,\ell}\bigr) & =
\sum_{m=0}^{n-1}\sum_{j=0}^{n-1-m} \bE (\bfV_m\circ\sigma^{\ell+j})
 = \sum_{m=0}^{n-1}(n-m) \, \langle \bfV_m\circ\sigma^\ell \rangle_{n-m} 
\\
&=  n \sum_{m=0}^{n-1}\, \langle \bfV_m\circ\sigma^\ell \rangle_{n-m} - \sum_{m=0}^{n-1} m\, \langle \bfV_m\circ\sigma^\ell \rangle_{n-m} .
\end{split}
\eeqn

By Theorem~\ref{thm:pair_bound}, there exist $C\geq 0$ and $\beta>0$ such that
\beq\label{eq:V_bound}
\sup_{\omega\in\Omega_+}\|\bfV_m(\omega)\| \leq C e^{-\beta m}, \quad m\geq 0.
\eeq
Therefore, $\|\langle \bfV_m\circ\sigma^\ell \rangle_k\| \leq C e^{-\beta m}$ for all $m,\ell\geq 0$ and $k\geq 1$, which yields
\beqn
\left\| \sum_{m=0}^{n-1} m\, \langle \bfV_m \circ\sigma^\ell\rangle_{n-m} \right\| \leq  C\sum_{m=0}^{n-1} m\, e^{-\beta m} \leq C\sum_{m=0}^{\infty} m\, e^{-\beta m} = C',
\eeqn
or
\beq\label{eq:prelim_cov_bound}
\left\|\rE(\bfS_{n,\ell}\otimes \bfS_{n,\ell})  - n \sum_{m=0}^{n-1} \, \langle \bfV_m\circ\sigma^\ell \rangle_{n-m} \right\| \leq C',\qquad \ell\geq 0,\,n\geq 1.
\eeq

Notice that $\bfV_m$ is bounded and $\fF_0^m$-measurable. By assumption~{\bf(A2)}, there exist the limits
\beqn
\langle \bfV_m \rangle_\infty = \lim_{k\to\infty}\langle \bfV_m \circ\sigma^\ell\rangle_k, \qquad \ell\geq 0,\,m\geq 0.
\eeqn
Recall that in~\eqref{eq:covariance_formula} we have claimed that the desired covariance matrix is given by
\beq\label{eq:covariance}
\bfSigma^2 \equiv \sum_{m=0}^{\infty} \, \langle \bfV_m \rangle_\infty.
\eeq
Certainly by~\eqref{eq:V_bound} the series on the right converges in norm and yields a well-defined matrix.
\begin{lem}\label{lem:AMS}
For any bounded and $\fF$-measurable function $\bfg:(B_\ve(\mathbf{0}))^\bN\to\bR^d$, 
\beqn
\langle \bfg\circ\sigma^\ell \rangle_\infty = \langle \bfg \rangle_\infty,\qquad\ell\geq 0,\,m\geq 0.
\eeqn
\begin{proof}
We only check the claim for $\ell=1$, the general case being similar. Clearly $\bfg\circ \sigma$ is bounded and measurable ($\fF$ being the product Borel sigma-algebra), and
\beqn
\langle \bfg \circ \sigma\rangle_k - \langle \bfg \rangle_k = \frac{1}{k}\bigl(\bE(\bfg \circ \sigma)- \bE(\bfg)\bigr) \to 0\quad\text{as $k\to\infty$}.
\eeqn
Since $\langle \bfg \circ \sigma\rangle_k \to \langle \bfg \circ \sigma\rangle_\infty$, the claim follows.
\end{proof}
\end{lem}

Continuing with~\eqref{eq:prelim_cov_bound}, if $\beta^{-1}\log n<n$,
\beqn
\sum_{m=0}^{n-1} \, \langle \bfV_m \circ \sigma^\ell \rangle_{n-m} = \sum_{m=0}^{\beta^{-1}\!\log n-1} \, \langle \bfV_m \circ \sigma^\ell  \rangle_{n-m} + \sum_{m=\beta^{-1}\!\log n}^{n-1} \, \langle \bfV_m \circ \sigma^\ell  \rangle_{n-m},
\eeqn
where
\beqn
\left\| \sum_{m=\beta^{-1}\!\log n}^{n-1} \, \langle \bfV_m \circ \sigma^\ell  \rangle_{n-m} \right\| \leq C \sum_{m=\beta^{-1}\!\log n}^{n-1} e^{-\beta m} \leq \frac{C}{n} \sum_{m=0}^{\infty} e^{-\beta m}.
\eeqn
In other words, we have shown that
\beqn
\left\|\rE(\bfS_{n,\ell}\otimes \bfS_{n,\ell})  - n \sum_{m=0}^{\beta^{-1}\!\log n-1} \, \langle \bfV_m \circ \sigma^\ell  \rangle_{n-m} \right\| \leq 2 C',\qquad\beta^{-1}\log n<n.
\eeqn
Next, recalling \eqref{eq:covariance} and Lemma~\ref{lem:AMS}, we decompose
\beqn
\sum_{m=0}^{\beta^{-1}\!\log n-1} \, \langle \bfV_m\circ \sigma^\ell  \rangle_{n-m} 
= \bfSigma^2 - \sum_{m=\beta^{-1}\!\log n}^\infty \, \langle \bfV_m  \rangle_\infty + \sum_{m=0}^{\beta^{-1}\!\log n-1} \, \Bigl(\langle \bfV_m\circ \sigma^\ell  \rangle_{n-m} -\langle \bfV_m  \rangle_\infty \Bigr).
\eeqn
The middle term on the right can be bounded using again~\eqref{eq:V_bound}. Indeed,
\beqn
\left\| \sum_{m=\beta^{-1}\!\log n}^{\infty} \, \langle \bfV_m \rangle_\infty \right\| \leq C \sum_{m=\beta^{-1}\!\log n}^\infty e^{-\beta m} \leq \frac{C'}{n}.
\eeqn
Hence, using assumption~{\bf(A2)}, we can compute
\beqn
\begin{split}
&\left\|\rE(\bfS_{n,\ell}\otimes \bfS_{n,\ell})  - n \bfSigma^2 \right\|  \leq 3 C' + n \left\|\sum_{m=0}^{\beta^{-1}\!\log n-1} \, \Bigl(\langle \bfV_m\circ\sigma^\ell \rangle_{n-m} -\langle \bfV_m \rangle_\infty \Bigr)\right\|
\\
& \qquad \qquad
\leq 3 C' + n \sum_{m=0}^{\beta^{-1}\!\log n-1} C_m\, r_{n-m}\, \| \bfV_m \|_\infty
\leq 3 C' + C n \sum_{m=0}^{\beta^{-1}\!\log n-1} C_m\, r_{n-m}\, e^{-\beta m}
\\
& \qquad\qquad
 \leq 3 C' + C D_\beta\log n
,\qquad \beta^{-1}\log n<n.
\end{split}
\eeqn
Finally,~\eqref{eq:prelim_cov_bound} yields also the crude bound
\beqn
\begin{split}
\left\|\rE(\bfS_{n,\ell}\otimes \bfS_{n,\ell})  - n \bfSigma^2 \right\| 
&
\leq C' + n \left\|\bfSigma^2  -  \sum_{m=0}^{n-1} \, \langle \bfV_m\circ\sigma^\ell \rangle_{n-m} \right\| \leq 
C' + n (\left\|\bfSigma^2\right\| + C') 
\\
&\leq 2 C'(1+ \beta^{-1} \log n)
,\qquad \beta^{-1}\log n\geq n.
\end{split}
\eeqn
Collecting the bounds in the two regimes $\beta^{-1}\log n< n$ and $\beta^{-1}\log n\geq n$, we see that there exists a constant $C''\geq 0$ such that
\beqn
\left\|\rE(\bfS_{n}\otimes \bfS_{n})  - n \bfSigma^2 \right\| \leq 3C'+ C''\log n, \quad n\geq 1.
\eeqn
This finishes the proof of Lemma~\ref{lem:linear_covariance}.
\end{proof}

An application of Lemma~\ref{lem:linear_covariance} with part~{\bf(II)} of Theorem~\ref{thm:Gouezel} finishes the proof of Theorem~\ref{thm:ASIP}.\qed


\begin{remark}
Related to the special case of Theorem~\ref{thm:fixed_ASIP}, the above proof contains the interesting fact that a pair correlation bound (for a well-chosen class of dynamically H\"older observables) alone implies the vector-valued almost sure invariance principle for Sinai billiards with fixed scatterers. This extends the analogous result of \cite{Stenlund_2010} about the central limit theorem for Sinai billiards. Before that, the otherwise classical central limit theorem for Sinai billiards had been obtained via the new method of multiple (as opposed to just pair) correlation bounds in \cite{Chernov_2006,ChernovMarkarian_2006}. The author has learned that also P\`ene \cite{Pene_2005} has proved the central limit theorem for Sinai billiards using correlation functions without, however, pursuing finer limit theorems.

\end{remark}


\subsection{Proof of Lemma~\ref{lem:coboundary}}\label{sec:proof_coboundary}

Let us write $\bP_k$ for the probability measure on $\bigl((B_\ve(\mathbf{0}))^\bN,\fF\bigr)$ given by $\bP_k(A) =  \tfrac{1}{k}\sum_{j=0}^{k-1} \bP(\sigma^{-j} A)$ for all $A\in\fF$. By assumption~{\bf(A2)}, $\bP_k(A) = \langle 1_A\rangle_k \to\langle 1_A\rangle_\infty$ as $k\to\infty$. The Vitali--Hahn--Saks theorem now states that the map $\bar\bP:\fF\to [0,1]:A\mapsto \langle 1_A\rangle_\infty$ is a probability measure. The measure $\bar \bP$ is called the \emph{stationary mean} of $\bP$, which is justified by the simple but crucial observation made in Lemma~\ref{lem:AMS} of the preceding section that~$\bar\bP$ --- unlike~$\bP$ --- is invariant for the left shift~$\sigma$. Because $\mu$ is invariant for each of the billiard maps, the measure $\bar\rP = \bar\bP\otimes \mu$ is therefore invariant for the skew-product map $\Phi$ defined in the statement of the lemma. 

Observe that, writing $\bar\rE$ for the expectation relative to $\bar \rP$,~\eqref{eq:covariance_formula} reads
\beq\label{eq:cov_new}
\begin{split}
\bfSigma^2 & = \bar\rE(\bfA_0\otimes\bfA_0)+ \sum_{m=1}^{\infty} \bar\rE (\bfA_0\otimes\bfA_{m} + \bfA_{m}\otimes\bfA_0).
\end{split}
\eeq
Here $\bfA_n(\omega,x) = \bff(\omega_n,\cF_n(\omega,x)) = \bff(\Phi^n(\omega,x))$ with the understanding that $\bff(\omega,x)\equiv \bff(\omega_0,x)$.  The following is a complementary result to Lemma~\ref{lem:linear_covariance}. It compares the covariance of the sum $\bfS_n = \sum_{i=0}^{n-1}\bfA_i = \sum_{i=0}^{n-1} \bff\circ\Phi^i$, computed with respect to $\bar\rP$ instead of $\rP$, to $n\bfSigma^2$. Taking advantage of the $\Phi$-invariance of $\bar\rP$ yields a match better than what one would naively infer from~\eqref{eq:linear_covariance}:
\begin{lem}
There exists a constant $C'\geq 0$ such that
\beq\label{eq:cov_new_bound}
\sup_{n\geq 0}\left\| \bar \rE\! \left(\bfS_n\otimes \bfS_n \right) - n\bfSigma^2 \right\| \leq C'.
\eeq
\end{lem}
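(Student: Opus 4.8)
The plan is to pass everything to the stationary mean $\bar\rP=\bar\bP\otimes\mu$ and exploit its $\Phi$-invariance --- established in the paragraph preceding the lemma, since $\mu$ is preserved by every billiard map and $\bar\bP$ is $\sigma$-invariant by Lemma~\ref{lem:AMS} --- to reduce $\bar\rE(\bfS_n\otimes\bfS_n)$ to a triangular sum that telescopes against $n\bfSigma^2$, leaving only a convergent tail.

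First I would record the stationarity identity. Because $\bfA_k=\bff\circ\Phi^k$ and $\bar\rP$ is $\Phi$-invariant, for $0\le j\le k$ we have $\bar\rE(\bfA_j\otimes\bfA_k)=\bar\rE\bigl((\bff\otimes(\bff\circ\Phi^{k-j}))\circ\Phi^j\bigr)=\bar\rE(\bfA_0\otimes\bfA_{k-j})$. Expanding $\bfS_n\otimes\bfS_n=\sum_{j,k=0}^{n-1}\bfA_j\otimes\bfA_k$ and grouping the off-diagonal terms by the gap $m=k-j$ gives
\beqn
\bar\rE(\bfS_n\otimes\bfS_n)=n\,\bar\rE(\bfA_0\otimes\bfA_0)+\sum_{m=1}^{n-1}(n-m)\,\bar\rE(\bfA_0\otimes\bfA_m+\bfA_m\otimes\bfA_0).
\eeqn
Subtracting $n\bfSigma^2$ in the form \eqref{eq:cov_new}, the diagonal terms cancel and one is left with
\beqn
\bar\rE(\bfS_n\otimes\bfS_n)-n\bfSigma^2=-\sum_{m=1}^{n-1}m\,\bar\rE(\bfA_0\otimes\bfA_m+\bfA_m\otimes\bfA_0)-n\sum_{m=n}^{\infty}\bar\rE(\bfA_0\otimes\bfA_m+\bfA_m\otimes\bfA_0).
\eeqn

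Next I would bound the individual correlation matrices. With $\bfV_m$ as in the proof of Lemma~\ref{lem:linear_covariance} --- bounded and $\fF_0^m$-measurable --- one has $\bar\rE(\bfA_0\otimes\bfA_m+\bfA_m\otimes\bfA_0)=\int\bfV_m\,\rd\bar\bP$, hence $\|\bar\rE(\bfA_0\otimes\bfA_m+\bfA_m\otimes\bfA_0)\|\le\sup_{\omega\in\Omega_+}\|\bfV_m(\omega)\|\le C e^{-\beta m}$ by the uniform pair-correlation bound \eqref{eq:V_bound} (itself a consequence of Theorem~\ref{thm:pair_bound}); note the integration against the probability measure $\bar\bP$ only uses the uniform-in-$\omega$ bound. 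Therefore the first sum is bounded in norm by $C\sum_{m\ge1}m\,e^{-\beta m}<\infty$ uniformly in $n$, and the second by $Cn\sum_{m\ge n}e^{-\beta m}=C(1-e^{-\beta})^{-1}\,n\,e^{-\beta n}$, which is bounded uniformly in $n\ge0$ because $n\,e^{-\beta n}\to0$. Adding the two contributions (the case $n=0$ being trivial, both sides vanishing) produces a finite constant $C'$, which is \eqref{eq:cov_new_bound}.

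There is no genuine obstacle here: the only points needing care are the $\Phi$-invariance of $\bar\rP$, already supplied, and the fact that the uniform exponential decay \eqref{eq:V_bound} survives averaging over $\bar\bP$. Everything else is the classical telescoping of a covariance series, legitimate precisely because we work with the stationary mean $\bar\rP$ rather than with $\rP$ itself --- which is exactly why the bound obtained is $O(1)$ rather than the weaker $O(\log n)$ of Lemma~\ref{lem:linear_covariance}.
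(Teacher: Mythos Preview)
Your proof is correct and follows essentially the same route as the paper: both use $\Phi$-invariance of $\bar\rP$ to reduce $\bar\rE(\bfS_n\otimes\bfS_n)$ to the triangular sum $n\,\bar\rE(\bfA_0\otimes\bfA_0)+\sum_{m=1}^{n-1}(n-m)\,\bar\rE(\bfA_0\otimes\bfA_m+\bfA_m\otimes\bfA_0)$, subtract $n\bfSigma^2$ via \eqref{eq:cov_new}, and bound the remainder using the uniform exponential decay \eqref{eq:V_bound}. The only cosmetic difference is that the paper writes the remainder as $\sum_{m\ge1}a_n(m)\,\bar\rE(\ldots)$ with $|a_n(m)|\le m$ and bounds it in one stroke by $C\sum_{m\ge1}m\,e^{-\beta m}$, whereas you split it into the two pieces and bound the tail by $Cn\,e^{-\beta n}/(1-e^{-\beta})$; both estimates are uniform in $n$.
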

\begin{proof}
We start with the elementary identity
\beqn
\begin{split}
\bar\rE\! \left(\bfS_n\otimes \bfS_n \right) 
& =
n\,\bar\rE(\bfA_0\otimes\bfA_0)+ \sum_{m=1}^{n-1} (n-m) \,\bar\rE (\bfA_0\otimes\bfA_{m} + \bfA_{m}\otimes\bfA_0),
\end{split}
\eeqn
which is where we need invariance. Recalling~\eqref{eq:cov_new}, we have
\beqn
\begin{split}
\bar\rE\! \left(\bfS_n\otimes \bfS_n \right) - n\bfSigma^2 
& = \sum_{m=1}^{\infty} a_n(m) \,\bar\rE (\bfA_0\otimes\bfA_{m} + \bfA_{m}\otimes\bfA_0)
\end{split}
\eeqn
where $a_n(m) = -m$ for $1\leq m<n$ and $a_n(m) = -n$ for $m\geq n$. Because $|a_n(m)|\leq m$,
\beqn
\left \|\bar\rE\! \left(\bfS_n\otimes \bfS_n \right) - n\bfSigma^2 \right\| \leq \sum_{m=1}^{\infty} m \left\| \bar\rE (\bfA_0\otimes\bfA_{m} + \bfA_{m}\otimes\bfA_0) \right\| .
\eeqn
In the notation of Section~\ref{sec:proof_covariance},
\beqn
\bar\rE (\bfA_0\otimes\bfA_{m} + \bfA_{m}\otimes\bfA_0) = \lim_{k\to\infty}\frac1k\sum_{\ell = 0}^{k-1} \bE(\bfV_m\circ\sigma^\ell).
\eeqn
The uniform bound in~\eqref{eq:V_bound} yields $\|\bE(\bfV_m\circ\sigma^\ell)\|\leq Ce^{-\beta m}$ and hence the result.
\end{proof}
Recall the $\mu$-average of $\bff$ vanishes identically.
Given a vector $\bfv\in\bR^d$, we define $f_\bfv = \bfv^\rT \bff$. Since $\bfv^\rT(\bfA_m\otimes\bfA_n)\bfv = \bfv^\rT(\bfA_n\otimes\bfA_m)\bfv = f_\bfv\circ\Phi^n \cdot f_\bfv\circ\Phi^m$ for all $m,n\geq 0$,~\eqref{eq:cov_new_bound} gives
\beq\label{eq:cov_new_bound2}
\left|\operatorname{Var}_{\bar\rP}\!\left(\sum_{k=0}^{n-1} f_\bfv\circ\Phi^k \right) - n\,\bfv^\rT  \bfSigma^2 \bfv\right|  = \left|\bfv^\rT \bigl(\bar \rE(\bfS_{n}\otimes \bfS_{n})  - n \bfSigma^2 \bigr)\bfv \right| \leq C'|\bfv|^2
\eeq
uniformly for $n\geq 1$. 

Suppose $\bfSigma^2$ is degenerate. In other words, there exists a vector $\bfv\in\bR^d$ such that $\bfv^\rT\bfSigma^2 \bfv = 0$.  By the bound above,
$
\operatorname{Var}_{\bar\rP}\!\left(\sum_{k=0}^{n-1} f_\bfv\circ\Phi^k \right) \leq C'|\bfv|^2
$
uniformly. Owing to the~$\Phi$-invariance of~$\bar\rP$, we can therefore apply~\cite[Lemma~1]{Leonov_1961} (see also~\cite{Robinson_1960}) and conclude that~$f_\bfv$ must be an~$L^2(\bar \rP)$-coboundary. In other words, there exists $g\in L^2(\bar \rP)$ such that $f_\bfv = g-g\circ\Phi$ holds $\bar\rP$-almost-everywhere. Since
\beq\label{eq:L2_norms}
\|g\|_{L^2(\bar \rP)}^2 = \lim_{k\to\infty}\frac1k \sum_{j=0}^{k-1} \int | g(\sigma^j\omega,x) |^2\,\rd \rP(\omega,x) 
,
\eeq
the claim of the lemma in one direction follows. 

To prove the claim in the other direction, suppose $f_\bfv = g-g\circ\Phi$ for some $g$ that satisfies the condition given in the lemma. Then $g\in L^2(\bar\rP)$ by \eqref{eq:L2_norms}, and $\sum_{i=0}^{n-1}f_\bfv\circ\Phi^i = g - g\circ\Phi^{n}$ holds $\bar\rP$-almost-everywhere. This immediately gives
\beqn
\operatorname{Var}_{\bar\rP}\!\left(\sum_{k=0}^{n-1} f_\bfv\circ\Phi^k \right) = \| g-g\circ\Phi^n \|_{L^2(\bar\rP)}^2 \leq 4\| g \|_{L^2(\bar\rP)}^2 .
\eeqn
Combining the bound with \eqref{eq:cov_new_bound2} we get $\bfv^\rT  \bfSigma^2 \bfv \leq n^{-1}\bigl(C'|\bfv|^2 + 4\| g \|^2_{L^2(\bar\rP)}\bigr)$ for all $n\geq 1$, which is only possible if $\bfSigma^2$ is degenerate in the direction of $\bfv$. This completes the proof of Lemma~\ref{lem:coboundary}.\qed




\bibliography{RB}{}

\begin{thebibliography}{10}

\bibitem{AyyerLiveraniStenlund_2009}
Arvind Ayyer, Carlangelo Liverani, and Mikko Stenlund.
\newblock Quenched {CLT} for random toral automorphism.
\newblock {\em Discrete Contin. Dyn. Syst.}, 24(2):331--348, 2009.
\newblock Available from: \url{http://dx.doi.org/10.3934/dcds.2009.24.331},
  \href {http://dx.doi.org/10.3934/dcds.2009.24.331}
  {\path{doi:10.3934/dcds.2009.24.331}}.

\bibitem{AyyerStenlund}
Arvind Ayyer and Mikko Stenlund.
\newblock Exponential decay of correlations for randomly chosen hyperbolic
  toral automorphisms.
\newblock {\em Chaos}, 17(4):043116, 7, 2007.
\newblock Available from: \url{http://dx.doi.org/10.1063/1.2785145}, \href
  {http://dx.doi.org/10.1063/1.2785145} {\path{doi:10.1063/1.2785145}}.

\bibitem{Billingsley_convergence}
Patrick Billingsley.
\newblock {\em Convergence of probability measures}.
\newblock Wiley Series in Probability and Statistics: Probability and
  Statistics. John Wiley \& Sons Inc., New York, second edition, 1999.
\newblock A Wiley-Interscience Publication.
\newblock Available from: \url{http://dx.doi.org/10.1002/9780470316962}, \href
  {http://dx.doi.org/10.1002/9780470316962} {\path{doi:10.1002/9780470316962}}.

\bibitem{Bradley_2005}
Richard~C. Bradley.
\newblock Basic properties of strong mixing conditions. {A} survey and some
  open questions.
\newblock {\em Probab. Surv.}, 2:107--144, 2005.
\newblock Update of, and a supplement to, the 1986 original.
\newblock Available from: \url{http://dx.doi.org/10.1214/154957805100000104},
  \href {http://dx.doi.org/10.1214/154957805100000104}
  {\path{doi:10.1214/154957805100000104}}.

\bibitem{Chernov_2006}
Nikolai Chernov.
\newblock Advanced statistical properties of dispersing billiards.
\newblock {\em J. Stat. Phys.}, 122(6):1061--1094, 2006.
\newblock Available from: \url{http://dx.doi.org/10.1007/s10955-006-9036-8},
  \href {http://dx.doi.org/10.1007/s10955-006-9036-8}
  {\path{doi:10.1007/s10955-006-9036-8}}.

\bibitem{ChernovMarkarian_2006}
Nikolai Chernov and Roberto Markarian.
\newblock {\em Chaotic billiards}, volume 127 of {\em Mathematical Surveys and
  Monographs}.
\newblock American Mathematical Society, Providence, RI, 2006.

\bibitem{DemersZhang_2011}
Mark~F. Demers and Hong-Kun Zhang.
\newblock Spectral analysis of the transfer operator for the {L}orentz gas.
\newblock {\em J. Mod. Dyn.}, 5(4):665--709, 2011.
\newblock Available from: \url{http://dx.doi.org/10.3934/jmd.2011.5.665}, \href
  {http://dx.doi.org/10.3934/jmd.2011.5.665}
  {\path{doi:10.3934/jmd.2011.5.665}}.

\bibitem{Gouezel_2010}
S{\'e}bastien Gou{\"e}zel.
\newblock Almost sure invariance principle for dynamical systems by spectral
  methods.
\newblock {\em Ann. Probab.}, 38(4):1639--1671, 2010.
\newblock Available from: \url{http://dx.doi.org/10.1214/10-AOP525}, \href
  {http://dx.doi.org/10.1214/10-AOP525} {\path{doi:10.1214/10-AOP525}}.

\bibitem{Gray_2009}
Robert~M. Gray.
\newblock {\em Probability, random processes, and ergodic properties}.
\newblock Springer, Dordrecht, second edition, 2009.

\bibitem{HollandMelbourne_2007}
Mark Holland and Ian Melbourne.
\newblock Central limit theorems and invariance principles for {L}orenz
  attractors.
\newblock {\em J. Lond. Math. Soc. (2)}, 76(2):345--364, 2007.
\newblock Available from: \url{http://dx.doi.org/10.1112/jlms/jdm060}, \href
  {http://dx.doi.org/10.1112/jlms/jdm060} {\path{doi:10.1112/jlms/jdm060}}.

\bibitem{LaceyPhilipp_1990}
Michael~T. Lacey and Walter Philipp.
\newblock A note on the almost sure central limit theorem.
\newblock {\em Statist. Probab. Lett.}, 9(3):201--205, 1990.
\newblock Available from: \url{http://dx.doi.org/10.1016/0167-7152(90)90056-D},
  \href {http://dx.doi.org/10.1016/0167-7152(90)90056-D}
  {\path{doi:10.1016/0167-7152(90)90056-D}}.

\bibitem{Lenci_2003}
Marco Lenci.
\newblock Aperiodic {L}orentz gas: recurrence and ergodicity.
\newblock {\em Ergodic Theory Dynam. Systems}, 23(3):869--883, 2003.
\newblock Available from: \url{http://dx.doi.org/10.1017/S0143385702001529},
  \href {http://dx.doi.org/10.1017/S0143385702001529}
  {\path{doi:10.1017/S0143385702001529}}.

\bibitem{Lenci_2006}
Marco Lenci.
\newblock Typicality of recurrence for {L}orentz gases.
\newblock {\em Ergodic Theory Dynam. Systems}, 26(3):799--820, 2006.
\newblock Available from: \url{http://dx.doi.org/10.1017/S0143385706000022},
  \href {http://dx.doi.org/10.1017/S0143385706000022}
  {\path{doi:10.1017/S0143385706000022}}.

\bibitem{Leonov_1961}
V.~P. Leonov.
\newblock On the dispersion of time means of a stationary stochastic process.
\newblock {\em Teor. Verojatnost. i Primenen.}, 6:93--101, 1961.

\bibitem{LeskelaStenlund_2011}
Lasse Leskel{\"a} and Mikko Stenlund.
\newblock A local limit theorem for a transient chaotic walk in a frozen
  environment.
\newblock {\em Stochastic Process. Appl.}, 121(12):2818--2838, 2011.
\newblock Available from: \url{http://dx.doi.org/10.1016/j.spa.2011.07.010},
  \href {http://dx.doi.org/10.1016/j.spa.2011.07.010}
  {\path{doi:10.1016/j.spa.2011.07.010}}.

\bibitem{MelbourneNicol_2005}
Ian Melbourne and Matthew Nicol.
\newblock Almost sure invariance principle for nonuniformly hyperbolic systems.
\newblock {\em Comm. Math. Phys.}, 260(1):131--146, 2005.
\newblock Available from: \url{http://dx.doi.org/10.1007/s00220-005-1407-5},
  \href {http://dx.doi.org/10.1007/s00220-005-1407-5}
  {\path{doi:10.1007/s00220-005-1407-5}}.

\bibitem{MelbourneNicol_2009}
Ian Melbourne and Matthew Nicol.
\newblock A vector-valued almost sure invariance principle for hyperbolic
  dynamical systems.
\newblock {\em Ann. Probab.}, 37(2):478--505, 2009.
\newblock Available from: \url{http://dx.doi.org/10.1214/08-AOP410}, \href
  {http://dx.doi.org/10.1214/08-AOP410} {\path{doi:10.1214/08-AOP410}}.

\bibitem{Nandori_2012}
P\'eter N\'andori, Domokos Sz\'asz, and Tam\'as Varj\'u.
\newblock {A Central Limit Theorem for Time-Dependent Dynamical Systems}.
\newblock {\em {Journal of Statistical Physics}}, {146}({6}):{1213--1220},
  {MAR} {2012}.
\newblock Available from: \url{http://dx.doi.org/10.1007/s10955-012-0451-8},
  \href {http://dx.doi.org/10.1007/s10955-012-0451-8}
  {\path{doi:10.1007/s10955-012-0451-8}}.

\bibitem{NicolPersson_2012}
Matthew Nicol and Tomas Persson.
\newblock Smooth {L}iv\v sic regularity for piecewise expanding maps.
\newblock {\em Proc. Amer. Math. Soc.}, 140(3):905--914, 2012.
\newblock Available from:
  \url{http://dx.doi.org/10.1090/S0002-9939-2011-10949-3}, \href
  {http://dx.doi.org/10.1090/S0002-9939-2011-10949-3}
  {\path{doi:10.1090/S0002-9939-2011-10949-3}}.

\bibitem{OttStenlundYoung}
William Ott, Mikko Stenlund, and Lai-Sang Young.
\newblock Memory loss for time-dependent dynamical systems.
\newblock {\em Mathematical Research Letters}, 16(3):463--475, 2009.
\newblock Available from:
  \url{http://www.intlpress.com/_newsite/site/pub/pages/journals/items/mrl/con%
tent/vols/0016/0003/00020435/index.php}.

\bibitem{Pene_2005}
Fran{\c{c}}oise P{\`e}ne.
\newblock Rate of convergence in the multidimensional central limit theorem for
  stationary processes. {A}pplication to the {K}nudsen gas and to the {S}inai
  billiard.
\newblock {\em Ann. Appl. Probab.}, 15(4):2331--2392, 2005.
\newblock Available from: \url{http://dx.doi.org/10.1214/105051605000000476},
  \href {http://dx.doi.org/10.1214/105051605000000476}
  {\path{doi:10.1214/105051605000000476}}.

\bibitem{PhilippStout_1975}
Walter Philipp and William Stout.
\newblock Almost sure invariance principles for partial sums of weakly
  dependent random variables.
\newblock {\em Mem. Amer. Math. Soc. 2}, (issue 2, 161):iv+140, 1975.

\bibitem{Robinson_1960}
Enders~A. Robinson.
\newblock Sums of stationary random variables.
\newblock {\em Proc. Amer. Math. Soc.}, 11:77--79, 1960.

\bibitem{Rosenblatt_1971}
Murray Rosenblatt.
\newblock {\em Markov processes. {S}tructure and asymptotic behavior}.
\newblock Springer-Verlag, New York, 1971.
\newblock Die Grundlehren der mathematischen Wissenschaften, Band 184.

\bibitem{SimulaStenlund_2009}
Tapio Simula and Mikko Stenlund.
\newblock Deterministic walks in quenched random environments of chaotic maps.
\newblock {\em Journal of Physics A: Mathematical and Theoretical},
  42(24):245101, 2009.
\newblock Available from:
  \url{http://stacks.iop.org/1751-8121/42/i=24/a=245101}.

\bibitem{Stenlund_2010}
Mikko Stenlund.
\newblock A strong pair correlation bound implies the {CLT} for {S}inai
  billiards.
\newblock {\em J. Stat. Phys.}, 140(1):154--169, 2010.
\newblock Available from: \url{http://dx.doi.org/10.1007/s10955-010-9987-7},
  \href {http://dx.doi.org/10.1007/s10955-010-9987-7}
  {\path{doi:10.1007/s10955-010-9987-7}}.

\bibitem{Stenlund_2011}
Mikko Stenlund.
\newblock Non-stationary compositions of {A}nosov diffeomorphisms.
\newblock {\em Nonlinearity}, 24:2991--3018, 2011.
\newblock \href {http://dx.doi.org/doi:10.1088/0951-7715/24/10/016}
  {\path{doi:doi:10.1088/0951-7715/24/10/016}}.

\bibitem{StenlundYoungZhang_2012}
Mikko Stenlund, Lai-Sang Young, and Hongkun Zhang.
\newblock Dispersing billiards with moving scatterers.
\newblock 2012.
\newblock Preprint arXiv:1210.0011.
\newblock Available from: \url{http://arxiv.org/abs/1210.0011}.

\bibitem{Strassen_1964}
Volker Strassen.
\newblock An invariance principle for the law of the iterated logarithm.
\newblock {\em Z. Wahrscheinlichkeitstheorie und Verw. Gebiete}, 3:211--226
  (1964), 1964.

\bibitem{Troubetzkoy_1991}
Serge Troubetzkoy.
\newblock Stochastic stability of scattering billiards.
\newblock {\em Teoret. Mat. Fiz.}, 86(2):221--230, 1991.
\newblock Available from: \url{http://dx.doi.org/10.1007/BF01016166}, \href
  {http://dx.doi.org/10.1007/BF01016166} {\path{doi:10.1007/BF01016166}}.

\bibitem{Withers_1981}
C.~S. Withers.
\newblock Central limit theorems for dependent variables. {I}.
\newblock {\em Z. Wahrsch. Verw. Gebiete}, 57(4):509--534, 1981.
\newblock Available from: \url{http://dx.doi.org/10.1007/BF01025872}, \href
  {http://dx.doi.org/10.1007/BF01025872} {\path{doi:10.1007/BF01025872}}.

\end{thebibliography}
\bibliographystyle{plainurl}


\end{document}